\documentclass[a4paper,12pt]{amsart}

\makeatletter

{\catcode`\|=\active 
  \gdef\Braket#1{\left<\mathcode`\|"8000\let|\BraVert {#1}\right>}}
\def\BraVert{\egroup\,\mid@vertical\,\bgroup}
{\catcode`\|=\active
  \gdef\set#1{\mathinner{\lbrace\,{\mathcode`\|"8000\let|\midvert #1}\,\rbrace}}
  \gdef\Set#1{\left\{\:{\mathcode`\|"8000\let|\SetVert #1}\:\right\}}}
\def\midvert{\egroup\mid\bgroup}
\def\SetVert{\egroup\;\mid@vertical\;\bgroup}
\begingroup
 \edef\@tempa{\meaning\middle}
 \edef\@tempb{\string\middle}
\expandafter \endgroup \ifx\@tempa\@tempb
 \def\mid@vertical{\middle|}
\else
 \let\mid@vertical\vrule
\fi
\makeatother

\usepackage{url}

\newcommand{\defit}[1]{{\em #1}}

\newcommand{\der}{\partial}

\newcommand{\ideal}[1]{S \cdot {#1}}


\newcommand{\kk}{\mathbb{K}}
\newcommand{\ZZ}{\mathbb{Z}}
\newcommand{\NN}{\mathbb{N}}

\newcommand{\AAA}{{\mathcal A}}
\newcommand{\BBB}{{\mathcal B}}

\newcommand{\LAT}{{\Lambda}}
\newcommand{\Lat}{{\Lambda'}}

\newcommand{\numof}[1]{\left| #1 \right|}
\newcommand{\distance}[2]{d({#1},{#2})}
\newcommand{\disk}[2]{B({#1},{#2})}

\newcommand{\coveredby}{\mathrel{\dot\subset}}

\newcommand{\cone}[2]{#1_{#2}}
\newcommand{\innercone}[1]{\cone{#1}{\emptyset}}

\newcommand{\cc}[1]{{\operatorname{cc} (#1)}}
\newcommand{\fcc}[1]{{\operatorname{cc}_{0} (#1)}}
\newcommand{\icc}[1]{{\operatorname{cc}_\infty (#1)}}

\newcommand{\mpts}[1]{{P(#1)}}

\newcommand{\downalpha}[1]{{\alpha^{(#1)}}}

\newcommand{\sectional}[3]{{#1}_{#2,#3}}

\newcommand{\Sym}{\operatorname{Sym}}
\newcommand{\Der}[2]{\operatorname{Der}_{#1}(#2)}

\theoremstyle{plain}
\newtheorem{thm}{Theorem}[section]
\newtheorem{lemma}[thm]{Lemma}
\newtheorem{cor}[thm]{Corollary}
\newtheorem{theorem}[thm]{Theorem}
\newtheorem{proposition}[thm]{Proposition}

\theoremstyle{remark}
\newtheorem{remark}[thm]{Remark}

\newtheorem{example}[thm]{Example}
\newtheorem*{acknowledgments}{Acknowledgments}

\theoremstyle{definition}
\newtheorem{definition}[thm]{Definition}

\begin{document}

\title[Multiplicity Lattices]{Exponents of $2$-multiarrangements and multiplicity lattices}

\author[Abe, T.]{ABE, Takuro}
\address[Abe, T.]{Department of Mechanical Engineering and Science, Kyoto University\\
Sakyo-ku, Kyoto, 606-8501, Japan}

\author[Numata, Y.]{NUMATA, Yasuhide}
\address[Numata, Y.]{ Department of Mathematical Informatics\\ The University of Tokyo\\
Hongo 7-3-1, Bunkyo-ku, Tokyo, 113-8656, Japan }
\address[Numata, Y.]{ JST CREST }

\keywords{hyperplane arrangements; multiarrangements; exponents of derivation modules, multiplicity lattices}
\subjclass[2000]{32S22}

\begin{abstract}
We introduce a concept of multiplicity lattices of $2$-multiarrangements, 
determine the combinatorics and geometry of that lattice, and give 
a criterion and method to construct a basis for derivation modules 
effectively.
\end{abstract}

\maketitle
\section{Introduction}

Let $\kk$ be a field and  $V$  a two-dimensional vector space 
over $\kk$.  
Fix a basis $\Set{x,y}$ for $V^{\ast}$ and define 
$S:=\Sym(V^{\ast}) \simeq \kk[x,y]$.
A \defit{hyperplane arrangement} $\AAA$ 
is a finite collection of affine hyperplanes in $V$. 
In this article, 
we assume that any $H \in \AAA$ contains the origin. In other words, 
all hyperplane arrangements are \defit{central}. 
For each $H \in \AAA$, let us fix 
a linear form $\alpha_H \in V^{\ast}$ such that $\ker(\alpha_H)=H$. 
For a hyperplane arrangement $\AAA$, 
a map $\mu:\AAA \to \NN = \ZZ_{\geq 0}$
 is called a \defit{multiplicity} and 
a pair $(\AAA,\mu)$ a \defit{multiarrangement}. 
When we want to make it clear that all multiarrangements are 
considered in $V \simeq \kk^2$, we use the term 
\defit{$2$-multiarrangement}.
(Ordinarily, a $2$-multiarrangement is defined as
a pair $(\AAA,m)$ of a central hyperplane arrangement $\AAA$ and multiplicity function $m:\AAA\to\ZZ_{>0}$.
From a $2$-multiarrangement $(\AAA,\mu)$ in our definition,
we can obtain a $2$-multiarrangement $(\AAA',m)$ in the original definition
by assigning
$\AAA' = \mu^{-1}(\ZZ_{>0})$ and 
$m=\mu|_{\AAA'}$. We identify ours with the original one in this manner.)
To each multiarrangement $(\AAA,\mu)$, 
we can associate the $S$-module $D(\AAA,\mu)$, 
called the \defit{derivation module} by the following manner:
\begin{gather*}
D(\AAA,\mu):=\Set{ \delta \in \Der{\kk}{S} |
\delta(\alpha_H) \in \ideal{\alpha_H^{\mu(H)}} \ (\forall H \in \AAA)},
\end{gather*}
where $\Der{\kk}{S} := S\cdot\der_x \oplus S\cdot\der_y$ is the
module of derivations. 
It is known that 
$D(\AAA,\mu)$ is a free graded $S$-module 
because we only consider $2$-multiarrangements 
(see \cite{Sa2}, \cite{OT} and \cite{Z}). 
If we choose a homogeneous basis $\Set{\theta,\theta'}$ for
$D(\AAA,\mu)$, 
then the \defit{exponents} of $(\AAA,\mu)$, 
denoted by $\exp(\AAA,\mu)$, 
is a multiset defined by
\begin{gather*}
\exp(\AAA):=(\deg (\theta),\deg (\theta')),
\end{gather*}
where the degree is a polynomial degree.

Multiarrangements were originally introduced by Ziegler in \cite{Z} 
and there are a lot of studies related to a multiarrangement 
and its derivation module. 
Especially, Yoshinaga characterized the freeness of hyperplane
arrangements by using the freeness of multiarrangements 
(\cite{Y1} and \cite{Y2}). 
In particular, according to the results in \cite{Y2}, 
we can obtain the necessary and sufficient condition 
for a hyperplane arrangement 
in three-dimensional vector space to be free 
in terms of the combinatorics of hyperplane arrangements, 
and the explicit description of exponents of $2$-multiarrangements. 
This is closely related to the Terao conjecture, 
which asserts that the freeness of hyperplane arrangements 
depends only on the combinatorics. 
However, instead of the simple description of the exponents 
of hyperplane arrangements, 
it is shown by Wakefield and Yuzvinsky in \cite{WY} 
that the general description of the exponents of $2$-multiarrangements 
are very difficult. 
In fact, 
there are only few results related 
to them (\cite{A0}, \cite{A} and \cite{W}). 
Recently, 
some theory to study the freeness of multiarrangements are developed 
by the first author, 
Terao and Wakefield in \cite{ATW1} and \cite{ATW2}, and 
some results on the free multiplicities are appearing (\cite{A2}). 
In these papers, 
the importance of the exponents of $2$-multiarrangements 
is emphasized too. 
Hence 
it is very important
to establish some general theory for the exponents of $2$-multiarrangements. 

The aim of this article is to give some answers to this problem. 
Our idea is to introduce the concept of the 
\defit{multiplicity lattice} of a fixed hyperplane arrangement.
The aim of the study of this lattice is similar, but the method is contrary to 
the study in \cite{WY}, for Wakefield and Yuzvinsky fixed one multiplicity and 
consider all hyperplane arrangements with it, but we fix one hyperplane arrangement and 
consider all multiplicities on it.
Let us fix a central hyperplane arrangement $\AAA=\Set{H_1,\ldots,H_n}$
and the lattice $\LAT=\NN^{\numof{\AAA}}$.
We identify $\mu\in\LAT$ with the map $\AAA\to\NN$ such that
$\mu(H_i)=\mu_i$ for $H_i\in \AAA$. 
Define a map $\Delta:\LAT \to \ZZ_{\geq 0}$ by 
\begin{gather*}
\Delta(\mu):=\deg(\theta'_\mu)-\deg (\theta_\mu),
\end{gather*}
where $\Set{\theta_\mu,\theta'_\mu }$ is a basis for 
$D(\AAA,\mu)$ such that $\deg (\theta_\mu)\leq \deg (\theta'_\mu)$. 
If we put $\Lat:=\LAT \setminus \Delta^{-1}(\Set{0})$, then 
$\theta_{\mu}$ is unique up to a scalar for each $\mu \in \Lat$, though 
$\theta'_\mu$ is not. 
Hence $\theta_\mu$ for $\mu \in \Lat$ is expected to have 
some good properties. Our main results are the investigations of 
these properties through considering 
the shape, topology and combinatorics of $\Lat$. 
For details, see Section 3, or Theorem \ref{theorem:misc}, 
Theorem \ref{theorem:shape} and Theorem \ref{theorem:independency}.
These results, combined with Saito's criterion 
(Theorem \ref{Saito}), allow us to 
construct a basis for $2$-multiarrangements effectively, 
see Theorem \ref{theorem:basis} for details.

Now the organization of this article is as follows. 
In Section \ref{section:def},
we introduce some notation and examples related to our new definitions.
In Section \ref{section:main},
we state the main results. 
In Section \ref{section:proof}, we 
recall elementary results about hyperplane arrangement theory
and prove the main results. 
In Section \ref{section:application},
 we show some 
applications of main results, especially determine some exponents of
multiarrangements of the Coxeter type. 

\begin{acknowledgments}
The authors are grateful
to the referee for pointing out several mistakes in the first draft and 
giving a lot of useful comments.
\end{acknowledgments}

\section{Definition and Notation}\label{section:def}
In this section, 
we introduce some basic terms and notation.
Let $\kk$ be a field, $V$ a two-dimensional vector space over $\kk$,
and $S$ a symmetric algebra of $V^{\ast}$. By choosing 
a basis $\Set{x,y}$ for $V^{\ast}$, $S$ can be identified with a polynomial ring $\kk[x,y]$.
The algebra $S$ can be graded by polynomial degree as $S=\bigoplus_{i\in\NN} S_i$, 
where $S_i$ is a vector space whose basis
is $\Set{x^j y^{i-j}| j=0,\ldots,i}$.

Let us fix a central hyperplane arrangement $\AAA$ in $V$, 
i.e., 
a finite collection $\Set{H_1,\ldots,H_n}$ of linear hyperplanes in $V$. 
For $H \in \AAA$, fix $\alpha_H \in S_1$ such that $\ker(\alpha_H)=H$. 
The following new definition plays the key role in this article.
\begin{definition}
We define \defit{the  multiplicity lattice}
$\LAT$ of $\AAA$
by 
\begin{gather*}
\LAT:=\NN^{\numof{\AAA}} = \NN^{n}.
\end{gather*}
\end{definition}
Let us identify $\mu=(\mu_1,\ldots,\mu_n) \in \LAT$ with the multiplicity 
$\mu:\AAA\to \NN$ 
defined by $\mu(H_i):=\mu_{H_i}=\mu_i$. Hence 
a pair $(\AAA,\mu)$ 
can be considered as a multiarrangement. 
The set $\LAT$ has 
the partial order $\subset$ defined by
\begin{gather*}
\mu \subset \nu \iff \text{$\mu_H \leq \nu_H$ for all $H$}.
\end{gather*}
For $\mu,\nu \in \LAT$, 
the binary operations $\wedge$ and $\vee$ are
defined by 
\begin{align*}
\mu \wedge \nu &:= \inf\Set{\mu, \nu},\\ 
\mu \vee \nu   &:= \sup\Set{\mu,\nu}, 
\end{align*}
i.e.,
$(\mu \wedge \nu)_H = \min\Set{\mu_H, \nu_H}$ and 
$(\mu \vee \nu)_H = \max\Set{\mu_H,\nu_H}$.
For $\mu \in \LAT$, 
we define the \defit{size}
$\numof{\mu}$ of $\mu$
by $\numof{\mu}:=\sum_{H\in \AAA} \mu_H$.
The element $0$, which is defined by $0_H = 0$ for all $H \in \AAA$,
is the minimum element.
The \defit{covering relation} $\mu \coveredby \nu$ is defined 
by $\mu \subset \nu$ and 
$\numof{\mu} + 1 = \numof{\nu}$.
The graph whose set of edges is $\Set{(\mu,\nu)\in\LAT^2|\mu\coveredby\nu}$ 
and whose set of vertices is $\LAT$ is called the Hasse graph of $\LAT$.
We identify $\LAT$ and its subset with (the set of vertices of)
the Hasse graph and its induced subgraph, respectively.
For $\mu,\nu\in \LAT$,
we define the \defit{distance} $\distance{\mu}{\nu}$ 
by 
$\distance{\mu}{\nu}:=\sum_{H\in \AAA} |\mu_H-\nu_H|$.
For $C, C' \subset \LAT$, 
we define $\distance{C}{C'}$ by
$\distance{C}{C'}:=\min\Set{\distance{\mu}{\mu'}|\mu\in C, \mu'\in C'}$.
For $\mu\in \LAT$ and $r\in\NN$,
we define the \defit{ball} $\disk{\mu}{r}$ with the radius $r$ and
center $\mu$  
by $\disk{\mu}{r}:=\Set{\nu\in \LAT| \distance{\mu}{\nu}<r}$.
\begin{definition}
We define a map $\Delta:\LAT \to \NN$ by
\begin{gather*}
\Delta(\mu) := |d_1 - d_2|,
\end{gather*}
where $(d_1,d_2)$ are the exponents of the 
free multiarrangement $(\AAA,\mu)$.
\end{definition}

\begin{definition}
Let $\Lat$ denote the support $\Delta^{-1}(\ZZ_{>0})$. 
For $H\in \AAA$,
let us define $\cone{\LAT}{H}$ to be the set 
\begin{gather*}
\Set{ \mu \in \LAT |  \mu_H > \frac{1}{2}\numof{\mu}  }.
\end{gather*} 
We define $\innercone{\LAT}$ and  $\innercone{\Lat}$  by
\begin{align*}
 \innercone{\LAT}&:= \LAT \setminus (\bigcup_{H\in \AAA} \cone{\LAT}{H})
=\Set{ \mu \in \LAT |  \mu_H \leq \frac{1}{2}\numof{\mu}  (\forall H \in \AAA)},\\
 \innercone{\Lat}&:=  \innercone{\LAT}\cap \Lat.
\end{align*}
\end{definition}
Roughly speaking, $\innercone{\LAT}$ consists of balanced elements
while $\cone{\LAT}{H}$ consists of elements 
such that $H$ monopolizes at least half of their multiplicities.

\begin{example}
\label{ex:a2}
Let $\AAA$ consist of three lines. 
In this case, 
\begin{align*}
\LAT&=\Set{(\mu_1,\mu_2,\mu_3)|\mu_i\in\NN},\\
\cone{\LAT}{1}&=\Set{(\mu_1,\mu_2,\mu_3)\in\LAT|\mu_1>\mu_2+\mu_3},\\
\cone{\LAT}{2}&=\Set{(\mu_1,\mu_2,\mu_3)\in\LAT|\mu_2>\mu_1+\mu_3},\\
\cone{\LAT}{3}&=\Set{(\mu_1,\mu_2,\mu_3)\in\LAT|\mu_3>\mu_1+\mu_2},
\intertext{and}
\innercone{\LAT}&=\Set{(\mu_1,\mu_2,\mu_3)\in\LAT|
\begin{array}{c}
\mu_1\leq \mu_2+\mu_3;\\
\mu_2\leq \mu_1+\mu_3;\\
\mu_3\leq \mu_1+\mu_2
\end{array}
}.
\end{align*}
By the result in Wakamiko \cite{W},
the exponents in this case can be described explicitly, 
and we have
\begin{align*}
\Delta(\mu)=
\begin{cases}
1&\text{if $\mu\in\innercone{\LAT}$ and $\numof{\mu}$ is odd,}\\
0& \text{if $\mu\in\innercone{\LAT}$ and $\numof{\mu}$ is even,}\\
2\mu_i-\numof{\mu} & \text{if $2\mu_i>\numof{\mu}$.}
\end{cases}
\end{align*}
Hence we have
$\innercone{\Lat}=\Set{\mu\in\innercone{\LAT}|\text{$\numof{\mu}$ is odd}}$.
\end{example}

For each $\mu\in \Lat$, 
there exist $\theta_\mu$ and $\theta'_\mu$ such that
 $\deg(\theta_\mu) < \deg(\theta'_\mu)$ and 
$\Set{\theta_\mu,\theta'_\mu}$ is a homogeneous basis
for $D(\AAA,\mu)$.
Since $\Delta(\mu) \neq 0$, $\theta_\mu$ is unique up to a nonzero scalar
for each $\mu\in \Lat$. 
Hence we can define 
a map $\theta: \Lat \to D(\AAA,0)=\Der{\kk}{S}$ by 
$\theta(\mu):=\theta_\mu$ (up to a scalar, 
or regard the image of $\theta$ as 
a one-dimensional vector space of $D(\AAA,0)$). 
\begin{definition}
Let us define $\cc{\Lat}$, 
$\fcc{\Lat}$ and $\icc{\Lat}$  
by
\begin{align*}
\cc{\Lat}  &:= \Set{\text{connected components of $\Lat$}},\\
\fcc{\Lat} &:= \Set{C \in \cc{ \Lat} | \numof{C} < \infty},\\
\icc{\Lat} &:= \Set{C \in \cc{ \Lat} | \numof{C} = \infty}, 
\end{align*}
where $\mu$ and $\nu$ are said to be \textit{connected} if there exists a path
 from $\mu$ to $\nu$ in the induced subgraph $\Lat$ of the Hasse graph.
For $C \in \cc{\Lat}$, $\mu \in C$ and $H \in \AAA$, 
define $\sectional{C}{\mu}{H}$ to be the set of $\nu \in C$
satisfying the following two conditions:
\begin{enumerate}
\item $\nu_{H'} = \mu_{H'}$ for each $H' \in \AAA\setminus \Set{H}$.
\item If $\nu\subset\kappa\subset\mu$ or $\mu\subset\kappa\subset\nu$, then $\kappa\in C$.
\end{enumerate}
\end{definition}
\begin{definition}
For $C\in \fcc{\Lat}$, we define $\mpts{C}$ by
\begin{align*}
\mpts{C}&:=
\Set{\mu \in C|\Delta(\mu) = \max\Set{ \Delta(\nu)| \nu\in C }}  
\intertext{and $\mpts{\Lat}$ by}
\mpts{\Lat}&:=\bigcup_{C\in\fcc{\Lat}} \mpts{C}.
\end{align*}
\end{definition}

\begin{example}
Let us consider the same $\AAA$ as Example \ref{ex:a2},
i.e., an arrangement consisting of three lines. 
In this case,
\begin{align*}
\fcc{\Lat} &=\Set{ \Set{\mu} | \mu \in \innercone{\Lat}},\\
\icc{\Lat} &=\Set{\cone{\LAT}{1},\cone{\LAT}{2},\cone{\LAT}{3}},\\
\cc{\Lat}  
&=\fcc{\Lat} \cup \Set{\cone{\LAT}{1},\cone{\LAT}{2},\cone{\LAT}{3}}.
\end{align*}
\end{example}

\begin{definition}
For a saturated chain
$\rho$ in $\LAT$,
i.e., a sequence 
$\rho=(\rho^{(0)},\ldots,\rho^{(k)})$ of elements in $\LAT$
satisfying $\rho^{(i)} \coveredby \rho^{(i+1)}$,
we define $\downalpha{\rho}$ by
\begin{gather*}
\downalpha{\rho}=
\prod_{\text{$i$: $\Delta(\rho^{(i)}) > \Delta(\rho^{(i+1)})$}}  
\alpha^{(i)} ,
\end{gather*}
where $\alpha^{(i)}=\alpha_{H}$ such that
$\rho^{(i)}_{H} + 1 = \rho^{(i+1)}_{H}$.
\end{definition}

\section{Main Results}\label{section:main}
In this section we state the main results. First 
let us give three theorems which 
show the structure of $\Lat$.

\begin{theorem}\label{theorem:misc}
We have the following:
\begin{enumerate}
\item For each  $C \in \fcc{\Lat}$, it holds that $ C \subset \innercone{\LAT}'$.
Moreover,  $\bigcup_{C \in \fcc{\Lat}} C = \innercone{\Lat}$.
\item $\icc{\Lat}=\Set{\cone{\LAT}{H} | H \in \AAA}$.
\item Any maximal connected component of $\LAT\setminus \Lat =\Delta^{-1}(\Set{0})$ consists
      of one point. 
\end{enumerate}
\end{theorem}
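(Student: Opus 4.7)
The plan rests on the \emph{step-change lemma}: if $\mu \coveredby \nu$ in $\LAT$ with $\nu = \mu + e_H$, then $\exp(\AAA,\nu)$ is obtained from $\exp(\AAA,\mu) = (d_1, d_2)$ by incrementing exactly one of the two coordinates, and hence $|\Delta(\mu)-\Delta(\nu)| = 1$. This is standard for $2$-multiarrangements and follows from the inclusion $D(\AAA,\nu) \hookrightarrow D(\AAA,\mu)$ having a one-dimensional quotient along $\alpha_H$. The third statement of the theorem is then immediate: every Hasse neighbor of $\mu \in \Delta^{-1}(0)$ has $\Delta = 1$, so each connected component of $\Delta^{-1}(0)$ is a singleton.

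Next I would pin down $\Delta$ on $\cone{\LAT}{H}$. Choose coordinates so that $\alpha_H = x$ and set $Q := \prod_{H' \neq H} \alpha_{H'}^{\mu_{H'}}$. The derivation $\theta := Q\,\der_y$ lies in $D(\AAA,\mu)$ with degree $\numof{\mu}-\mu_H$. Conversely, any $\theta_0 = f\,\der_x + g\,\der_y \in D(\AAA,\mu)$ of degree $d < \numof{\mu}-\mu_H$ has $f \in x^{\mu_H}S$ with $\deg f = d < \numof{\mu}-\mu_H < \mu_H$, so $f = 0$; then $g\,\alpha_{H',y} \in \alpha_{H'}^{\mu_{H'}} S$ for each $H' \neq H$ with $\alpha_{H',y}$ a nonzero constant, which forces $g$ to be divisible by $Q$ and contradicts $\deg g = d < \deg Q$. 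Hence $\exp(\AAA,\mu) = (\numof{\mu}-\mu_H,\,\mu_H)$, giving $\Delta(\mu) = 2\mu_H - \numof{\mu} > 0$, and in particular $\cone{\LAT}{H} \subset \Lat$. The same divisibility argument applied to $\nu$ with $\nu_H = \numof{\nu}/2$ forces $\exp(\AAA,\nu) = (\numof{\nu}/2, \numof{\nu}/2)$, so $\Delta(\nu) = 0$.

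From this I would deduce that $\cone{\LAT}{H}$ is a maximal connected component of $\Lat$. Connectedness: from any $\mu \in \cone{\LAT}{H}$ one can reach $e_H$ by successively reducing each $\mu_{H'}$ (for $H' \neq H$) down to $0$ and then $\mu_H$ down to $1$; a direct check shows the strict inequality $\mu_H > \numof{\mu}/2$ is preserved along the way. Maximality: the only Hasse edges exiting $\cone{\LAT}{H}$ land on a neighbor $\nu$ with $\nu_H = \numof{\nu}/2$, and such $\nu$ lies outside $\Lat$ by the boundary formula. The cones for distinct $H$ are automatically disjoint because $\mu_H + \mu_{H'} \leq \numof{\mu}$. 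This gives $\Set{\cone{\LAT}{H} | H \in \AAA} \subseteq \icc{\Lat}$.

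The principal obstacle is to show that every component of $\Lat$ contained in $\innercone{\LAT}$ is finite; granting this, statements (i) and (ii) follow because any $C \in \cc{\Lat}$ either meets some $\cone{\LAT}{H}$ (and then equals it by maximality, hence is infinite) or lies inside $\innercone{\LAT}$ (and hence is finite). To establish the finiteness I would analyse $C$ through its one-dimensional sections: for $\mu \in C \cap \innercone{\LAT}$, the ray $\mu, \mu + e_H, \mu + 2e_H, \ldots$ enters the disjoint component $\cone{\LAT}{H}$ at $j_0 := \numof{\mu} - 2\mu_H + 1$, so the $\pm 1$-walk $j \mapsto \Delta(\mu + je_H)$ must cross zero strictly before $j_0$, bounding $\sectional{C}{\mu}{H}$ in each direction. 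The hard part is upgrading these sectional bounds to a uniform bound on the Hasse diameter of $C$, since one-dimensional finiteness does not by itself imply overall finiteness; this requires showing that $\Delta$ must decay as one moves away from a local maximum within $\innercone{\LAT}$, for which an induction on $\numof{\mu}$ or on the radius of a covering Hasse ball appears to be the natural device.
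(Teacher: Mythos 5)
Your treatment of the cone components is sound and in fact more explicit than the paper's: by computing $\exp(\AAA,\mu)=(\numof{\mu}-\mu_H,\mu_H)$ directly for $\mu_H\ge\frac{1}{2}\numof{\mu}$ you obtain both $\cone{\LAT}{H}\subset\Lat$ and $\Delta=0$ on the boundary $\mu_H=\frac{1}{2}\numof{\mu}$, from which connectedness and maximality of each cone follow; the paper instead reaches the cones indirectly, showing that an infinite component must contain an infinite section (Lemma \ref{lemma:infty}) and hence a cone point (Lemma \ref{lemma:outsidecone}). Your derivation of the third bullet from the $\pm1$ step lemma is exactly the paper's use of Lemma \ref{lemma:s}.

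However, there is a genuine gap, and you have located it yourself: you never prove that a connected component of $\Lat$ contained in $\innercone{\LAT}$ is finite, and both the first and the second bullet hinge on this. Your sectional argument only shows that each one-dimensional section $\sectional{C}{\mu}{H}$ is finite, and, as you concede, that does not exclude an infinite ``staircase'' component all of whose axis-parallel sections are finite. Closing this is the technical heart of the paper and requires new input about the derivations themselves, not just bookkeeping with the $\pm1$ walk: (a) unimodality of $\Delta$ along each section (Lemma \ref{lemma:unimodal}, proved by a derivation-module computation with the bases $\alpha_H\theta_\nu$); (b) a ``diamond'' lemma controlling how $\Delta$ propagates across a unit square $\kappa\coveredby\mu,\mu'\coveredby\nu$ of $\LAT$ (Lemma \ref{lemma:latA:Detail}, again proved by comparing explicit bases via Lemma \ref{lemma:bas}), which is what converts one-dimensional information into the statement that a sectionwise maximum $\mu$ forces $C=\disk{\mu}{\Delta(\mu)}$ with $\Delta$ decaying linearly in the distance (Lemma \ref{lemma:shape}); and (c) a walking argument that reaches such a sectionwise maximum from any point whose sections are all finite (Lemmas \ref{lemma:preservemax} and \ref{lemma:infty}). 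Asserting that ``$\Delta$ must decay as one moves away from a local maximum'' and that an induction ``appears to be the natural device'' names the needed conclusion without supplying the key idea, namely the diamond lemma (b); without it the proposal does not prove bullets one and two.
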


\begin{theorem}\label{theorem:shape}
Let $C\in \fcc{\Lat}$ and $\mu\in\mpts{C}$.
Then
\begin{gather*}
C=\disk{\mu}{\Delta(\mu)},
\end{gather*}
and, for $\nu \in C$, 
\begin{gather*}
\Delta(\nu)=\Delta(\mu)-\distance{\mu}{\nu}.
\end{gather*}
In particular, 
for $C \in \fcc{\Lat}$,  
$\mpts{C}$ consists of one point.
\end{theorem}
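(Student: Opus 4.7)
The plan is to establish the formula $\Delta(\nu) = \Delta(\mu) - \distance{\mu}{\nu}$ directly for every $\nu \in \disk{\mu}{\Delta(\mu)}$; once this is in hand, both the equality $C = \disk{\mu}{\Delta(\mu)}$ and the uniqueness of the peak point follow immediately, since the formula forces $\Delta$ to drop strictly as one moves away from $\mu$, while positive values of $\Delta$ on the ball certify membership in $\Lat$ and hence, via monotone connecting paths, in the component $C$.

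The key local input is the standard fact that along a Hasse cover $\nu' \coveredby \nu' + \mathbf{1}_H$ the two exponents change by adding $1$ to exactly one coordinate; in particular, whenever both endpoints lie in $\Lat$ one has $\Delta(\nu' + \mathbf{1}_H) = \Delta(\nu') \pm 1$. Combining this edge rule with the maximality of $\mu$, every Hasse neighbour of $\mu$ that lies in $\Lat$ must satisfy $\Delta = \Delta(\mu) - 1$, which seeds an induction on $k = \distance{\mu}{\nu}$. To carry the induction further I would construct an explicit candidate for the minimal-degree generator of $D(\AAA, \nu)$: writing $A = \Set{H \in \AAA | \nu_H > \mu_H}$ and $a_H = \nu_H - \mu_H$ for $H \in A$, set $\Theta^{(\nu)} := \bigl(\prod_{H \in A} \alpha_H^{a_H}\bigr)\,\theta_\mu$. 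A direct divisibility check places $\Theta^{(\nu)}$ in $D(\AAA, \mu \vee \nu) \subseteq D(\AAA, \nu)$ with degree $d_1(\mu) + \sum_{H \in A} a_H$; if $\Theta^{(\nu)}$ is genuinely of minimal degree in $D(\AAA, \nu)$, the relation $d_1(\nu) + d_2(\nu) = \numof{\nu}$ together with a short arithmetic manipulation then yield exactly $\Delta(\nu) = \Delta(\mu) - \distance{\mu}{\nu}$.

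The main obstacle is precisely the minimality of $\Theta^{(\nu)}$ — ruling out any derivation in $D(\AAA, \nu)$ of strictly smaller degree. I would address this either by building a companion $\Theta'^{(\nu)}$ from $\theta'_\mu$ and invoking Saito's criterion (Theorem \ref{Saito}) for the pair $\Set{\Theta^{(\nu)}, \Theta'^{(\nu)}}$, forcing it to be a basis of $D(\AAA, \nu)$; or by a step-by-step induction along a monotone Hasse path from $\mu$ to $\nu$, where the edge rule restricts $\Delta$ at distance $k$ to the two values $\Delta(\mu) - k$ and $\Delta(\mu) - k + 2$ and the offending second branch must be excluded by tracking how $\theta_\mu$ and its products $\alpha_H \theta_\mu$ interact at each step. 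The crucial point is that at the peak $\theta_\mu(\alpha_H) \in \ideal{\alpha_H^{\mu_H}} \setminus \ideal{\alpha_H^{\mu_H+1}}$ for every $H \in \AAA$ (a direct consequence of the maximality of $\Delta(\mu)$ via the edge rule), so no extra divisibility is ever available to cause $\Delta$ to rebound as one walks outward.
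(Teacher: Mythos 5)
Your overall strategy---propagating the explicit candidate $\Theta^{(\nu)}=\bigl(\prod_{H\in A}\alpha_H^{a_H}\bigr)\theta_\mu$ outward from the peak and reading $\Delta(\nu)$ off its degree---is a genuinely different organization from the paper's. The paper never writes a global formula: it proves unimodality of $\Delta$ along each coordinate line $\sectional{C}{\mu}{H}$ (Lemma \ref{lemma:unimodal}, via a divisibility argument), then a local ``diamond'' consistency lemma for quadruples $\kappa\coveredby\mu,\mu'\coveredby\nu$ (Lemma \ref{lemma:latA:Detail}), and obtains the ball shape by propagating these local constraints from the peak (Lemma \ref{lemma:shape}). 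Your route is viable and arguably more concrete, but as written it has two real gaps at exactly the point you flag as the obstacle.

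First, the Saito-companion fallback cannot work: any $\Theta'^{(\nu)}$ manufactured from $\theta'_\mu$ by multiplication has degree at least $d_2(\mu)$, so $\deg\Theta^{(\nu)}+\deg\Theta'^{(\nu)}\ge \numof{\mu}+s$ with $s=\sum_{H\in A}a_H$, while Saito's criterion requires the sum to equal $\numof{\nu}=\numof{\mu}+s-t$, $t=\sum_{H\notin A}(\mu_H-\nu_H)$; this fails whenever $t>0$, and even for $t=0$ it would require $\theta'_\mu\in D(\AAA,\nu)$, which is false in general. Second, your ``crucial point'' $\theta_\mu(\alpha_H)\in\ideal{\alpha_H^{\mu_H}}\setminus\ideal{\alpha_H^{\mu_H+1}}$ is correct (otherwise $\theta_\mu\in D(\AAA,\mu+\mathbf{1}_H)$ would give $\Delta(\mu+\mathbf{1}_H)=\Delta(\mu)+1$ inside $C$, contradicting maximality), but it only controls the \emph{upward} steps of the induction, where some $\nu_H$ increases beyond $\mu_H$. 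For a \emph{downward} step $\nu'=\nu-\mathbf{1}_H$ with $1\le\nu_H\le\mu_H$, the bad branch $\Delta(\nu')=\Delta(\nu)+1$ amounts to $\theta_\nu=\alpha_H\psi$ for some $\psi\in D(\AAA,\nu')$, i.e.\ to $\alpha_H$ dividing the derivation $\theta_\nu$ itself; since $H\notin A$, this forces $\alpha_H$ to divide $\theta_\mu$, whence $\theta_\mu/\alpha_H\in D(\AAA,\mu-\mathbf{1}_H)$ and $\Delta(\mu-\mathbf{1}_H)=\Delta(\mu)+1$, again contradicting maximality. That is a different divisibility statement from the one you prove, and it appears nowhere in your sketch. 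With it supplied, your induction on $\distance{\mu}{\nu}$ closes: the sphere at distance $\Delta(\mu)$ carries $\Delta=0$, so $C$ is exactly the ball and the peak is unique. So the gap is fillable, but it is a gap.
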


\begin{cor}\label{cor:distance}
For $C \in \fcc{\Lat},\ \mu\in\mpts{C}$ and $\nu\in\LAT$ satisfying 
$\distance{\mu}{\nu}<\Delta(\mu)+2$,
\begin{gather*}
\Delta(\nu)=|\Delta(\mu)-\distance{\mu}{\nu}|.
\end{gather*}
\end{cor}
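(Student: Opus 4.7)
The plan is to split on $k := \distance{\mu}{\nu}$, handling the ``interior'' case $k < \Delta(\mu)$ directly from Theorem \ref{theorem:shape} and the two ``boundary'' cases $k = \Delta(\mu)$ and $k = \Delta(\mu)+1$ by a shortest-path bootstrapping argument. Set $d := \Delta(\mu)$. When $k < d$, we have $\nu \in \disk{\mu}{d} = C$ by Theorem \ref{theorem:shape}, so $\Delta(\nu) = d - k = |d - k|$ is immediate.

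The key auxiliary input for the boundary cases is the following fact, which I would record as a preliminary lemma (and which is already implicit in the definition of $\downalpha{\rho}$): if $\mu' \coveredby \mu''$, then $|\Delta(\mu'') - \Delta(\mu')| = 1$. This is the standard observation that raising a single multiplicity by one increases exactly one of the two exponents by one, so the unsorted pair $(d_1,d_2)$ becomes $(d_1+1,d_2)$ or $(d_1,d_2+1)$; after sorting, $\Delta$ shifts by $\pm 1$. I would either cite this or prove it in one line using the standard exact sequence relating $D(\AAA,\mu')$ and $D(\AAA,\mu'')$.

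For $k = d$, take a shortest $\LAT$-path $\mu = \nu^{(0)}, \nu^{(1)}, \dots, \nu^{(d)} = \nu$; its penultimate vertex satisfies $\distance{\mu}{\nu^{(d-1)}} = d - 1 < d$, so by Theorem \ref{theorem:shape} we get $\nu^{(d-1)} \in C$ with $\Delta(\nu^{(d-1)}) = 1$. Since $\nu^{(d-1)}$ and $\nu$ are Hasse-adjacent, the lemma forces $\Delta(\nu) \in \{0, 2\}$. If $\Delta(\nu) = 2$, then $\nu \in \Lat$ shares a Hasse edge with $\nu^{(d-1)} \in C$, so $\nu$ lies in the same maximal connected component $C$; but Theorem \ref{theorem:shape} asserts $C = \disk{\mu}{d}$ is an \emph{open} ball, and $\distance{\mu}{\nu} = d$ places $\nu$ outside it — contradiction. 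Hence $\Delta(\nu) = 0 = |d - d|$.

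For $k = d + 1$, take a shortest path of length $d + 1$; its penultimate vertex has distance $d$ from $\mu$, so the previous case yields $\Delta(\nu^{(d)}) = 0$. One more application of the $\pm 1$-step lemma together with $\Delta \geq 0$ gives $\Delta(\nu) = 1 = |d - (d+1)|$. The main obstacle is the case $k = d$: it crucially uses that $C$ is an \emph{open} ball (strict inequality in the definition of $\disk{\mu}{d}$), since this is precisely what rules out $\Delta(\nu) = 2$ and forces $\nu \notin \Lat$. Everything else is routine once the $\pm 1$ step property is in hand.
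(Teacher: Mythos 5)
Your proof is correct, and it is essentially the argument the paper leaves implicit: the interior case is Theorem~\ref{theorem:shape} verbatim, and your ``$\pm1$-step lemma'' is exactly the paper's Lemma~\ref{lemma:s}, so you may cite it rather than reprove it. The boundary cases $k=\Delta(\mu)$ and $k=\Delta(\mu)+1$ are handled correctly by combining Lemma~\ref{lemma:s} with the fact that $C=\disk{\mu}{\Delta(\mu)}$ is defined by a strict inequality.
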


The following result implies the independency of ``low-degree'' bases. 

\begin{theorem}\label{theorem:independency}
Let $C, C'\in \fcc{\Lat}$ such that $\distance{C}{C'}=2$. 
If $\mu\in C$ and $\mu' \in C'$,  
then $\Set{\theta_\mu,\theta_{\mu'}}$ is $S$-linearly independent. 
Moreover, 
if $C \in \cc{\Lat}$ and $\mu,\mu'\in C$, 
then $\Set{\theta_\mu,\theta_{\mu'}}$ is $S$-linearly dependent.
\end{theorem}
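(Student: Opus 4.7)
The plan is to handle the two assertions of the theorem separately, using Saito's criterion (Theorem \ref{Saito}) in both. The second assertion (dependence inside one component) is much easier, so I tackle it first.

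For the dependence statement, I fix a path $\mu = \mu^{(0)}, \mu^{(1)}, \ldots, \mu^{(k)} = \mu'$ in the induced subgraph $\Lat$ of the Hasse graph. For any cover $\mu^{(i)} \coveredby \mu^{(i+1)}$ differing in the multiplicity of a single hyperplane $H$, the exponents of the smaller free $2$-multiarrangement jump to those of the larger one by adding $1$ to exactly one of the two components. Since both endpoints lie in $\Lat$, the value $\Delta$ therefore changes by exactly $\pm 1$. If $\Delta$ increases along the edge, then $\theta_{\mu^{(i+1)}}$ already sits in the one-dimensional minimum-degree part of $D(\AAA, \mu^{(i)})$, so $\theta_{\mu^{(i+1)}} \in \kk \cdot \theta_{\mu^{(i)}}$; if $\Delta$ decreases, then $\alpha_H \theta_{\mu^{(i)}}$ is a degree-matching element of the one-dimensional minimum-degree part of $D(\AAA, \mu^{(i+1)})$, forcing $\alpha_H \theta_{\mu^{(i)}} \in \kk \cdot \theta_{\mu^{(i+1)}}$. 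Either way $\theta_{\mu^{(i)}}$ and $\theta_{\mu^{(i+1)}}$ are $S$-linearly dependent, and propagating the dependence along the path in the rank-two module $\Der{\kk}{S}$ proves the claim.

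For the independence statement, combining with the previous paragraph it suffices to exhibit one pair $\nu \in C$ and $\nu' \in C'$ realising $\distance{\nu}{\nu'} = 2$ for which $\theta_\nu, \theta_{\nu'}$ are $S$-linearly independent. Any intermediate point $\tilde\nu$ on a geodesic from $\nu$ to $\nu'$ is adjacent to both; since $C$ and $C'$ are distinct maximal components, such a $\tilde\nu$ cannot lie in $\Lat$, forcing $\Delta(\tilde\nu) = 0$. Writing the exponents of $\tilde\nu$ as $(d,d)$, adjacency then forces $\Delta(\nu) = \Delta(\nu') = 1$. Two geometric shapes occur: \emph{case A}, $\nu' = \nu + 2e_H$ with unique intermediate $\tilde\nu = \nu + e_H$; \emph{case B}, $\nu' = \nu + e_H - e_{H'}$ with distinct $H, H'$ and two intermediates $\tilde\nu_1 = \nu + e_H$ and $\tilde\nu_2 = \nu - e_{H'}$, both forced to have $\Delta = 0$.

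The central computation is a Saito-determinant identity inside the two-dimensional piece $D(\AAA, \tilde\nu)_d = \kk \cdot \theta_{\tilde\nu} \oplus \kk \cdot \theta'_{\tilde\nu}$. In case A, both $\alpha_H \theta_\nu$ and $\theta_{\nu'}$ sit in this piece; expanding them in the basis and using Saito's criterion for $\tilde\nu$ and for $\nu$ yields
\[
\alpha_H \cdot \det(\theta_\nu, \theta_{\nu'}) = W \prod_{H'' \in \AAA} \alpha_{H''}^{\tilde\nu_{H''}},
\]
where $W \in \kk$ is the $2 \times 2$ determinant of the expansion coefficients. Case B produces the analogous identity after replacing $\theta_{\nu'}$ by $\alpha_{H'}\theta_{\nu'}$, with an extra $\alpha_{H'}$ factor that cancels against the one already present in $\prod \alpha_{H''}^{\tilde\nu_{1,H''}}$ because $\nu_{H'} \geq 1$. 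In both cases proving $\det(\theta_\nu, \theta_{\nu'}) \neq 0$ reduces to the $\kk$-linear independence of two explicit degree-$d$ derivations in the relevant two-dimensional piece.

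This $\kk$-independence is forced by the hypothesis $\Delta(\tilde\nu)=0$, together with the use of the \emph{second} intermediate in case B. In case A, a proportionality $\alpha_H \theta_\nu = c\, \theta_{\nu'}$ would make $\theta_{\nu'}/\alpha_H$ a polynomial derivation equal (up to scalar) to $\theta_\nu$, which when unpacked places $\theta_\nu$ in $D(\AAA, \tilde\nu)$ in degree $d-1<d$, impossible. In case B, a proportionality $\alpha_H \theta_\nu = c \alpha_{H'}\theta_{\nu'}$ together with coprimality of $\alpha_H, \alpha_{H'}$ forces $\theta_{\nu'} = \alpha_H \eta$ with $\eta$ of degree $d-2$; checking the ideal conditions hyperplane by hyperplane shows $\eta \in D(\AAA, \tilde\nu_2)$, whose minimum degree is $d-1$, a contradiction. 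The step I expect to be the main obstacle is the organisation of case B: one must \emph{simultaneously} use $\Delta(\tilde\nu_1) = 0$ to set up the determinant relation and $\Delta(\tilde\nu_2) = 0$ to rule out $\kk$-proportionality, while verifying $\nu_{H'} \geq 1$ so that the relevant divisions are legal before this bookkeeping works out.
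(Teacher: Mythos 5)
Your overall strategy is essentially the paper's: you prove dependence within a component by propagating $\theta_{\mu^{(i+1)}}\in\kk\,\theta_{\mu^{(i)}}$ or $\alpha_H\theta_{\mu^{(i)}}\in\kk\,\theta_{\mu^{(i+1)}}$ along Hasse edges (this is Lemma \ref{lemma:bas} and Corollary \ref{lemma:pathbasis}), and you prove independence by reducing to a single pair at distance $2$ all of whose intermediate points have $\Delta=0$ and deriving a contradiction from a putative relation (this is Lemmas \ref{lemma:independency} and \ref{lemma:indep}). Your determinant-in-the-graded-piece formulation is only cosmetically different from the paper's ``if $S$-dependent then $\theta_{\nu'}=a\alpha_H\theta_\nu$'' argument, and your individual computations in cases A and B are sound.

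There is, however, a genuine gap: a pair with $\distance{\nu}{\nu'}=2$ comes in \emph{three} shapes, not two. Besides your case A ($\nu'_H=\nu_H+2$) and your case B ($\nu'_H=\nu_H+1$, $\nu'_{H'}=\nu_{H'}-1$), there is the comparable case $\nu\subset\nu'$ with $\nu'_H=\nu_H+1$ and $\nu'_{H'}=\nu_{H'}+1$ for distinct $H,H'$. This is not subsumed by your case B: there the two intermediates are $\nu\vee\nu'$ and $\nu\wedge\nu'=\nu-e_{H'}$, whereas here both intermediates $\nu+e_H$ and $\nu+e_{H'}$ lie \emph{above} $\nu$, and your argument via $\tilde\nu_2=\nu-e_{H'}$ and the condition $\nu_{H'}\geq 1$ has no analogue. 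Nor can the case be avoided by choosing a different minimizing pair: for three lines in general position, $C=\Set{(1,1,1)}$ and $C'=\Set{(2,2,1)}$ are both singletons in $\fcc{\Lat}$ with $\distance{C}{C'}=2$, and the unique minimizing pair has exactly this shape. The paper handles it as the second case of Lemma \ref{lemma:independency}: with $\kappa=\nu+e_H$ and $\kappa'=\nu+e_{H'}$, a relation $\theta_{\nu'}=a\alpha_H\theta_\nu$ gives $\alpha_H\theta_\nu(\alpha_{H'})\in\ideal{\alpha_{H'}^{\nu'_{H'}}}$, hence by coprimality of $\alpha_H$ and $\alpha_{H'}$ one gets $\theta_\nu\in D(\AAA,\kappa')$, contradicting $\Delta(\kappa')=0$. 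Your technique adapts with little effort, but as written the case analysis is incomplete and the proof does not cover all configurations that actually occur.
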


The theorems above imply the following three corollaries, 
which enable us to construct the basis for $D(\AAA,\mu)$ effectively. 

\begin{cor}\label{cor:criterion:I}
Let $N \subset \innercone{\LAT}$ 
such that
$\innercone{\LAT} \setminus N$ does not have any connected component
 whose size is larger than $1$, and let 
 $\vartheta:N \to D(\AAA,0)$ such that
$\vartheta_\mu \in D(\AAA,\mu)$ and 
$\deg \vartheta_\mu < \frac{\numof{\mu}}{2}$.
Then the following are equivalent:
\begin{itemize} 
\item $\Set{\vartheta_\mu,\vartheta_{\nu}}$ is $S$-linearly independent
if 
\begin{gather*}
\min{\Set{\distance{\mu'}{\nu'}|\substack{
\text{$\mu$ and $\mu'$ are in the same connected component in $N$}\\
\text{$\nu$ and $\nu'$ are in the same connected component in $N$}}}}=2.
\end{gather*}
\item $N = \innercone{\Lat}$ .
\end{itemize}
\end{cor}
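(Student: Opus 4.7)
The plan is to reduce everything to the canonical low-degree generators $\theta_\mu$ and then invoke Theorems~\ref{theorem:misc}, \ref{theorem:shape}, and \ref{theorem:independency}.

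\emph{Reduction.} For $\mu\in N$, let $(d_1,d_2)$ denote the exponents of $D(\AAA,\mu)$ with $d_1\leq d_2$. Since $d_1+d_2=\numof{\mu}$, we have $d_2\geq\numof{\mu}/2>\deg\vartheta_\mu$. The graded isomorphism $D(\AAA,\mu)\cong S(-d_1)\oplus S(-d_2)$ then forces $\vartheta_\mu$ to lie in the $S$-submodule generated by a minimum-degree basis element. Hence $\vartheta_\mu=f_\mu\theta_\mu$ for some $f_\mu\in S$ when $\mu\in\Lat$, while $\vartheta_\mu=0$ when $\mu\notin\Lat$ (there $d_1=d_2=\numof{\mu}/2$, so no nonzero element of $D(\AAA,\mu)$ has degree below $\numof{\mu}/2$). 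Consequently, $\{\vartheta_\mu,\vartheta_\nu\}$ is $S$-linearly independent iff both $\vartheta_\mu,\vartheta_\nu$ are nonzero and $\{\theta_\mu,\theta_\nu\}$ is $S$-linearly independent.

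\emph{Backward direction.} Assume $N=\innercone{\Lat}$. By Theorem~\ref{theorem:misc}, $\innercone{\Lat}=\bigsqcup_{C\in\fcc{\Lat}}C$ and each $C$ is contained in $\innercone{\LAT}$; hence the connected components of the induced subgraph on $N$ are exactly the $C\in\fcc{\Lat}$. The distance-$2$ hypothesis becomes $\distance{C_\mu}{C_\nu}=2$ for the components $C_\mu,C_\nu$ of $\mu,\nu$, whence Theorem~\ref{theorem:independency} gives $S$-linear independence of $\{\theta_\mu,\theta_\nu\}$, which by the reduction lifts to $\{\vartheta_\mu,\vartheta_\nu\}$.

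\emph{Forward direction.} Argue by contrapositive; suppose $N\neq\innercone{\Lat}$. If some $\mu\in N$ has $\Delta(\mu)=0$, then $\vartheta_\mu=0$, and the goal is to exhibit some $\nu\in N$ whose $N$-component is distinct from $\mu$'s and at distance exactly $2$ from it. By Theorem~\ref{theorem:misc} the point $\mu$ is isolated in $\Delta^{-1}(0)$, so its $\innercone{\LAT}$-neighbors land in $\fcc{\Lat}$-components whose ball geometry (Theorem~\ref{theorem:shape}), together with the singleton-gap hypothesis, supplies the required $\nu$, yielding the dependent pair $\{0,\vartheta_\nu\}$. If instead some $\xi\in\innercone{\Lat}\setminus N$, the hypothesis makes $\xi$ isolated in $\innercone{\LAT}\setminus N$, so every $\innercone{\LAT}$-neighbor of $\xi$ lies in $N$; two neighbors $\mu,\nu$ on opposite sides of $\xi$ within the ball $C\in\fcc{\Lat}$ containing $\xi$ belong to the same $\Lat$-component, so $\{\theta_\mu,\theta_\nu\}$ is $S$-linearly dependent by Theorem~\ref{theorem:independency}, yet the removal of $\xi$ places them in distinct $N$-components at $N$-distance $2$, contradicting the independence assumption.

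The main obstacle is the bookkeeping in the forward direction: one must verify that a single defect---an extra $\Delta=0$ point belonging to $N$ or a missing point of $\innercone{\Lat}$---really produces two $N$-components at distance exactly $2$, rather than being repaired by a longer detour through the remainder of $N$. Combining the singleton-gap hypothesis with the isolation of $\Delta^{-1}(0)$ points (Theorem~\ref{theorem:misc}) and the ball shape of $\fcc{\Lat}$-components (Theorem~\ref{theorem:shape}) is precisely what closes this loop.
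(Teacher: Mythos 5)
The paper states this corollary without a written proof (``The theorems above imply the following three corollaries''), so your attempt can only be judged on its own merits. Your reduction to the canonical generators and your backward direction are sound: granting that each $\vartheta_\mu$ is a nonzero homogeneous element, the degree bound forces $\vartheta_\mu=f_\mu\theta_\mu$ with $f_\mu\neq 0$, independence transfers, and when $N=\innercone{\Lat}$ the $N$-components are exactly the elements of $\fcc{\Lat}$ by Theorem \ref{theorem:misc}, so Theorem \ref{theorem:independency} applies. The forward direction, however, has a genuine gap at its decisive step. You claim that for $\xi\in\innercone{\Lat}\setminus N$, two neighbours $\mu,\nu$ of $\xi$ inside the ball $C\in\fcc{\Lat}$ containing $\xi$ land in \emph{distinct} connected components of $N$ at distance $2$. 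This is false whenever $C\setminus\Set{\xi}$ stays connected, and by Theorem \ref{theorem:shape} that is the generic situation: if $C$ is a ball of radius at least $3$ and $\xi$ is an interior point with $\Delta(\xi)\geq 2$ (for instance the point obtained from the centre by raising two distinct coordinates by one), then every neighbour of $\xi$ still lies in $C$, so the singleton-gap hypothesis on $N$ is satisfied, yet any two neighbours of $\xi$ can be joined inside $C$ by a lattice path avoiding $\xi$. They therefore lie in the \emph{same} $N$-component, the distance-$2$ hypothesis of the first bullet never fires for them, and since deleting such an interior point does not change the distances between distinct components, your argument produces no violating pair at all. You must either show that under the stated hypotheses only the disconnecting configurations can occur (e.g.\ $\Delta(\xi)=1$, or $\xi$ the centre of a radius-$2$ ball, where the neighbours of $\xi$ really do become isolated singletons), or locate a violating pair by some other mechanism; as written the case analysis is incomplete exactly where the content of the corollary lies.

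The first subcase of your forward direction has the same defect in sharper form: ``supplies the required $\nu$'' is an assertion, not an argument. If one admits $\vartheta_\mu=0$ for a point $\mu\in N$ with $\Delta(\mu)=0$, the required $\nu$ in a different $N$-component at distance exactly $2$ need not exist --- $N$ may well consist of a single connected component absorbing $\mu$ together with the adjacent balls, in which case the first bullet is vacuously true. (Under the reading that $\vartheta_\mu$ must be nonzero, this subcase is empty and should simply be discarded, since then $N\subset\innercone{\Lat}$ automatically.) Your closing paragraph correctly identifies the bookkeeping of the forward direction as ``the main obstacle,'' but the proposal does not actually overcome it.
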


\begin{cor}\label{cor:criterion:II}
Let $N \subset \innercone{\LAT}$
and $\vartheta:N \to D(\AAA,0)$ such that
$\vartheta_\mu \in D(\AAA,\mu)$,  
$\deg \vartheta_\mu < \frac{\numof{\mu}}{2}$
and $\Delta'(\mu)=\numof{\mu}-2\deg \vartheta_\mu>0$.
Assume that 
$\disk{\mu}{\Delta'(\mu)}$ and  $\disk{\nu}{\Delta'(\nu)}$ are disjoint
 for $\mu \neq \nu \in N$, and that
$\innercone{\LAT} \setminus \bigcup_{\mu \in N} \disk{\mu}{\Delta'(\mu)}$ 
 has no connected components
 whose size is larger than $1$.
Then the following are equivalent:
\begin{itemize}
\item  $\Set{\vartheta_\mu,\vartheta_{\nu}}$ are $S$-linearly independent
if $\Delta'(\mu)+\Delta'(\nu) = \distance{\mu}{\nu}$.
\item $N=\mpts{\Lat}$ and $\vartheta_\mu=\theta_\mu$ 
for each $\mu\in\mpts{\Lat}$. 
\end{itemize}
\end{cor}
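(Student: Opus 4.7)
The plan is to establish the equivalence in two directions; Corollary~\ref{cor:criterion:I} is the engine for the harder ``only if'' direction.

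For the direction ``$N=\mpts{\Lat}$ and $\vartheta_\mu=\theta_\mu$ $\Rightarrow$ linear independence'': since $\Delta'(\mu)=\Delta(\mu)$, Theorem~\ref{theorem:shape} identifies the ball $\disk{\mu}{\Delta'(\mu)}$ with the component $C\in\fcc{\Lat}$ containing $\mu$. Distinct balls are thus disjoint, and their complement in $\innercone{\LAT}$ lies in $\Delta^{-1}(\{0\})$, which has only singleton components by the third clause of Theorem~\ref{theorem:misc}. If $\mu\in\mpts{C},\mu'\in\mpts{C'}$ with $\distance{\mu}{\mu'}=\Delta(\mu)+\Delta(\mu')$, then $\distance{C}{C'}=2$, and Theorem~\ref{theorem:independency} gives $S$-linear independence of $\theta_\mu,\theta_{\mu'}$.

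For the converse, I would first observe that each $\mu\in N$ lies in $\Lat$ (otherwise no nonzero element of $D(\AAA,\mu)$ has degree $<|\mu|/2$), and that Theorem~\ref{theorem:shape} yields $B_\mu:=\disk{\mu}{\Delta'(\mu)}\subset C_\mu\subset\innercone{\LAT}$, with $C_\mu$ the $\Lat$-component of $\mu$. Set $M:=\bigcup_{\mu\in N}B_\mu\subset\innercone{\LAT}$ and extend $\vartheta$ to $\tilde\vartheta:M\to D(\AAA,0)$ by
\begin{gather*}
\tilde\vartheta_\rho := \Bigl(\prod_{H\in\AAA}\alpha_H^{\max(0,\,\rho_H-\mu_H)}\Bigr)\vartheta_\mu
 \quad (\rho\in B_\mu);
\end{gather*}
disjointness makes this well defined, $\tilde\vartheta_\rho\in D(\AAA,\rho)$, and $\deg\tilde\vartheta_\rho<|\rho|/2$ because $\distance{\mu}{\rho}<\Delta'(\mu)$.

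The crux is to apply Corollary~\ref{cor:criterion:I} to $(M,\tilde\vartheta)$ and conclude $M=\innercone{\Lat}$. The singleton-complement hypothesis is given. If two connected components $D,D'$ of $M$ are at distance $2$, then an optimal pair inside $B_{\mu^\circ}\subset D$, $B_{\nu^\circ}\subset D'$ realizing this minimum forces $\distance{\mu^\circ}{\nu^\circ}=\Delta'(\mu^\circ)+\Delta'(\nu^\circ)$ by the triangle inequality, so our hypothesis gives linear independence of $\vartheta_{\mu^\circ},\vartheta_{\nu^\circ}$. Since $M\subset\innercone{\Lat}$, any path in $M$ stays in $\Lat$, so the centres of balls lying in one component of $M$ share a single $\Lat$-component; Theorem~\ref{theorem:independency} then supplies $S$-linear dependence relations within each, allowing independence to be transferred from $(\mu^\circ,\nu^\circ)$ to the centres of the balls containing arbitrary $\rho\in D,\sigma\in D'$, and multiplication by nonzero polynomials over the domain $S$ completes the transfer to $\tilde\vartheta_\rho,\tilde\vartheta_\sigma$.

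Finally, $M=\innercone{\Lat}$ combined with $B_\mu\subset C_\mu$ gives $C=\bigsqcup_{\mu\in N\cap C}B_\mu$ for every $C\in\fcc{\Lat}$. The contrapositive of our hypothesis combined with Theorem~\ref{theorem:independency} forbids $\distance{\mu}{\mu'}=\Delta'(\mu)+\Delta'(\mu')$ for distinct $\mu,\mu'\in N\cap C$; together with $\distance{\mu}{\mu'}\geq\Delta'(\mu)+\Delta'(\mu')-1$ from ball-disjointness, a packing analysis of $\ell^1$ balls inside the $\ell^1$ ball $C$ forces $|N\cap C|=1$, with that single ball equal to $C$. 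Uniqueness of the $\ell^1$ centre and radius then forces $\mu=\mpts{C}$ and $\Delta'(\mu)=\Delta(\mu)$, so $\vartheta_\mu$ is a scalar multiple of $\theta_\mu$. The $\ell^1$ ball-packing rigidity in this final step is the main obstacle, requiring careful combinatorial bookkeeping inside each~$C$.
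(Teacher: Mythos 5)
The paper gives no written proof of this corollary (Corollaries \ref{cor:criterion:I}--\ref{cor:criterion:III} are merely asserted to follow from the main theorems), so your argument must stand on its own. Its architecture is sound: the backward direction is exactly Lemma \ref{lemma:indep}; for the forward direction, inflating each $\vartheta_\mu$ over its ball and feeding the pair $(M,\tilde\vartheta)$ into Corollary \ref{cor:criterion:I} is a legitimate reduction, and the unstated fact that makes all your transfer steps work --- any nonzero $\vartheta_\mu\in D(\AAA,\mu)$ with $\deg\vartheta_\mu<\numof{\mu}/2$ is automatically a nonzero $S$-multiple of $\theta_\mu$, since the coefficient of $\theta'_\mu$ must vanish for degree reasons --- is true and should be made explicit, as it is what links Theorem \ref{theorem:independency} (a statement about the $\theta$'s) to your $\vartheta$'s.

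The genuine gap is the final step. ``Uniqueness of the $\ell^1$ centre and radius'' is false for balls in $\LAT=\NN^n$: already in $\NN$ one has $\disk{1}{2}=\disk{0}{3}=\Set{0,1,2}$, so from the set equality $C=\disk{\mu_1}{\Delta'(\mu_1)}=\disk{\mu_C}{\Delta(\mu_C)}$ (with $\mu_C\in\mpts{C}$) you cannot conclude $\mu_1=\mu_C$ and $\Delta'(\mu_1)=\Delta(\mu_1)$ by generic metric rigidity. The conclusion is salvageable, but only by using the inequalities specific to this situation: $\Delta'(\mu_1)\le\Delta(\mu_1)=\Delta(\mu_C)-\distance{\mu_C}{\mu_1}$ from Theorem \ref{theorem:shape}, the bound $\Delta(\mu_C)\le\numof{\mu_C}$, and an explicit construction of witness points separating the two balls (e.g.\ along the ray obtained by repeatedly increasing a coordinate $H$ with $(\mu_C)_H\ge(\mu_1)_H$, and down a chain below $\mu_C$ when no such $H$ exists). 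Note that $\disk{\mu_1}{r}\subsetneq\disk{\mu_1}{r'}$ for $r<r'$ because one can always increase a coordinate, which already yields $\Delta'(\mu_1)=\Delta(\mu_1)$ and hence $\vartheta_{\mu_1}=c\,\theta_{\mu_1}$; the real work is showing $\mu_1=\mu_C$. Two smaller omissions: your claims that $\distance{\mu^\circ}{\nu^\circ}=\Delta'(\mu^\circ)+\Delta'(\nu^\circ)$ at touching components, and that each $C$ is a single ball, both silently require the parity congruence $\distance{\mu}{\nu}\equiv\Delta'(\mu)+\Delta'(\nu)\pmod 2$, since disjointness alone gives only $\distance{\mu}{\nu}\ge\Delta'(\mu)+\Delta'(\nu)-1$; with that congruence in hand, two adjacent points of $C$ cannot lie in different balls, and connectedness of $C$ finishes the single-ball claim without any further packing analysis.
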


\begin{cor}\label{cor:criterion:III}
Let $N=\Set{ \mu\in \innercone{\LAT} | \text{$\numof{\mu}$ is odd.} }$
and $\vartheta:N \to D(\AAA,0)$ such that
$\vartheta_\mu \in D(\AAA,\mu)$ and $\deg \theta_\mu < \frac{\numof{\mu}}{2}$.
Define the equivalence relation  $\sim$  generated by
\begin{gather*}
\mu \sim \nu
\iff
\text{ $\Set{\vartheta_\mu,\vartheta_\nu}$ is $S$-linearly dependent
 and $\distance{\mu}{\nu}=2$}.
\end{gather*}
Then the following are equivalent for $\mu,\nu\in N$:
\begin{itemize}
\item $\mu\sim\nu$ .
\item $\mu,\nu\in C \in \fcc{\Lat}$. 
\end{itemize}
\end{cor}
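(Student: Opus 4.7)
The plan is to reduce $\sim$ on $N$ to the $S$-linear dependence of the canonical minimal-degree derivations $\theta_\mu$, and then read both implications off from Theorems \ref{theorem:misc} and \ref{theorem:independency}.

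First I would unpack the hypothesis. Since $\numof{\mu}$ is odd, the two exponents $d_1\leq d_2$ of $(\AAA,\mu)$ satisfy $d_1+d_2=\numof{\mu}$ with $d_1\neq d_2$, so $d_1<\numof{\mu}/2<d_2$ and $\Delta(\mu)\geq 1$; in particular $N\subseteq \innercone{\Lat}$. Writing $\vartheta_\mu=p\,\theta_\mu+q\,\theta'_\mu$ in a homogeneous basis with $\deg\theta_\mu=d_1$ and $\deg\theta'_\mu=d_2$, the bound $\deg\vartheta_\mu<\numof{\mu}/2\leq d_2$ forces $q=0$, so $\vartheta_\mu$ is a nonzero $S$-multiple of $\theta_\mu$. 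Consequently $\Set{\vartheta_\mu,\vartheta_\nu}$ is $S$-linearly dependent if and only if $\Set{\theta_\mu,\theta_\nu}$ is.

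For $\mu\sim\nu\Rightarrow\mu,\nu\in C\in\fcc{\Lat}$, I would inspect a generating chain $\mu=\mu_0,\ldots,\mu_k=\nu$ in $N$ with $\distance{\mu_i}{\mu_{i+1}}=2$ and dependence at each step. By Theorem \ref{theorem:misc}, each $\mu_i$ lies in a unique $C_i\in\fcc{\Lat}$; if $C_i\neq C_{i+1}$, then $\distance{C_i}{C_{i+1}}$ cannot be $1$ (such a Hasse edge inside $\Lat$ would merge the components), so it equals $2$, and Theorem \ref{theorem:independency} gives $S$-linear independence of $\Set{\theta_{\mu_i},\theta_{\mu_{i+1}}\!}$, contradicting the reduction above. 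Conversely, given $\mu,\nu\in N\cap C$ with $C\in\fcc{\Lat}$, I would pick a path $\mu=\rho_0,\ldots,\rho_k=\nu$ in the Hasse graph of $C$; since $\numof{\rho_i}$ alternates in parity and $\numof{\mu}\equiv\numof{\nu}\pmod{2}$, the length $k$ is even and each $\rho_{2j}$ lies in $N$. Each consecutive pair satisfies $\distance{\rho_{2j}}{\rho_{2j+2}}\in\Set{0,2}$, and the distance-$2$ pairs are $\sim$-edges by the same-component clause of Theorem \ref{theorem:independency} combined with the reduction above, so transitivity delivers $\mu\sim\nu$.

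The main obstacle will be this bridging step in the converse: a Hasse path inside $C$ typically visits vertices outside $N$, and must be thinned to a distance-$2$ walk staying in $N\cap C$. The matching parity of $\numof{\mu}$ and $\numof{\nu}$ (forced by both lying in $N$) together with the connectedness of $C$ in the Hasse graph of $\LAT$ is exactly what makes the pairing-up of consecutive cover steps go through; everything else is bookkeeping with the reduction from $\vartheta_\mu$ to $\theta_\mu$.
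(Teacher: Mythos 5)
Your proposal is correct and follows exactly the route the paper intends: the paper gives no separate proof of this corollary, merely asserting that it follows from Theorems \ref{theorem:misc} and \ref{theorem:independency}, and your reduction of $\vartheta_\mu$ to an $S$-multiple of $\theta_\mu$ via the degree bound, the distance-$2$ argument between distinct finite components for the forward implication, and the parity-based thinning of a Hasse path inside $C$ for the converse are precisely the missing details. The only point worth flagging is the implicit assumption $\vartheta_\mu\neq 0$, which is already needed for $\deg\vartheta_\mu$ to make sense and so is a defect of the statement rather than of your argument.
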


\begin{remark}
 In Corollaries \ref{cor:criterion:I}, \ref{cor:criterion:II} and  \ref{cor:criterion:III},
we do not require the condition $\deg(\vartheta_\mu) = \deg(\theta_\mu)$.
\end{remark}

Finally we state the theorems which describe the behavior 
of the basis near, or between the centers of connected balls.

\begin{theorem}\label{theorem:basis}
Assume that $\mu,\nu\in \Lat$ belong to distinct connected components 
and satisfy $\Delta(\mu)+\Delta(\nu)=\distance{\mu}{\nu}$.
Let $\kappa\in \LAT$ such that $\mu \wedge \nu \subset \kappa \subset \mu \vee \nu$,
and
\begin{align*}
\alpha_{\mu,\kappa}&=\prod_{H\in \AAA} \alpha_H^{\max\Set{\kappa_H-\mu_H,0}},\\
\alpha_{\nu,\kappa}&=\prod_{H\in \AAA} \alpha_H^{\max\Set{\kappa_H-\nu_H,0}}.
\end{align*}
Then $\Set{\alpha_{\mu,\kappa}\theta_{\mu}, \alpha_{\nu,\kappa}\theta_\nu}$
is a homogeneous basis for $D(\AAA,\kappa)$.
\end{theorem}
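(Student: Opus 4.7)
The plan is to verify the three conditions of Saito's criterion (Theorem \ref{Saito}): that both $\alpha_{\mu,\kappa}\theta_{\mu}$ and $\alpha_{\nu,\kappa}\theta_{\nu}$ lie in $D(\AAA,\kappa)$, that their polynomial degrees add up to $\numof{\kappa}$, and that they are $S$-linearly independent. Only the last is genuinely nontrivial; the first two reduce to bookkeeping once we record $\deg\theta_\eta = (\numof{\eta}-\Delta(\eta))/2$ for each $\eta\in\Lat$, which follows from $\deg\theta_\eta + \deg\theta'_\eta = \numof{\eta}$ and the definition of $\Delta$.

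For membership, the factor $\alpha_{\mu,\kappa} = \prod_H \alpha_H^{\max(\kappa_H-\mu_H,0)}$ exactly supplies what is missing, so $\alpha_{\mu,\kappa}\theta_\mu(\alpha_H) \in (\alpha_H^{\max(\mu_H,\kappa_H)})\subseteq(\alpha_H^{\kappa_H})$, and similarly on the $\nu$-side. For degrees, the hypothesis $\Delta(\mu)+\Delta(\nu)=\distance{\mu}{\nu}$ combined with $\numof{\mu}+\numof{\nu}-\distance{\mu}{\nu}=2\numof{\mu\wedge\nu}$ yields $\deg\theta_\mu+\deg\theta_\nu = \numof{\mu\wedge\nu}$, while the sandwich $\mu\wedge\nu\subset\kappa\subset\mu\vee\nu$ gives the coordinatewise identity $\max(\kappa_H-\mu_H,0)+\max(\kappa_H-\nu_H,0)=\kappa_H-\min(\mu_H,\nu_H)$, so $\deg\alpha_{\mu,\kappa}+\deg\alpha_{\nu,\kappa}=\numof{\kappa}-\numof{\mu\wedge\nu}$; adding gives $\numof{\kappa}$.

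Turning to linear independence, the Jacobian determinant $\det(\theta_\mu,\theta_\nu)$, viewed via $\theta_\mu,\theta_\nu\in D(\AAA,\mu\wedge\nu)$, is divisible by $\prod_H\alpha_H^{(\mu\wedge\nu)_H}$; but this product already has degree $\numof{\mu\wedge\nu}=\deg\theta_\mu+\deg\theta_\nu$, so $\det(\theta_\mu,\theta_\nu)=c\prod_H\alpha_H^{(\mu\wedge\nu)_H}$ for a scalar $c\in\kk$. The per-coordinate computation from the previous paragraph then upgrades this to $\det(\alpha_{\mu,\kappa}\theta_\mu,\alpha_{\nu,\kappa}\theta_\nu)=c\prod_H\alpha_H^{\kappa_H}$, and Saito's criterion finishes the proof once we show $c\neq 0$.

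The main obstacle is exactly this non-vanishing of $c$, i.e., the $S$-linear independence of $\theta_\mu$ and $\theta_\nu$. The plan is to reduce to the distance-two case handled by Theorem \ref{theorem:independency}. Pick a saturated chain from $\mu$ down to $\mu\wedge\nu$ and then up to $\nu$; this has length $\distance{\mu}{\nu}$, and along it $\Delta$ changes by $\pm 1$ at every covering step. Since the starting and ending values force $\Delta$ to decrease $\Delta(\mu)$ times and then increase $\Delta(\nu)$ times, the chain passes through a unique vertex with $\Delta=0$. Let $\tilde\mu$ and $\tilde\nu$ denote the neighbors of that vertex on the chain; then $\distance{\tilde\mu}{\tilde\nu}=2$, $\tilde\mu$ lies in the component of $\mu$, and $\tilde\nu$ in the component of $\nu$. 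Theorem \ref{theorem:independency} applied to $\tilde\mu,\tilde\nu$ gives $S$-linear independence of $\theta_{\tilde\mu}$ and $\theta_{\tilde\nu}$, while its \emph{moreover} clause gives $S$-linear dependence of $\theta_\mu$ with $\theta_{\tilde\mu}$ (and of $\theta_\nu$ with $\theta_{\tilde\nu}$); together these force $\theta_\mu$ and $\theta_\nu$ to be $S$-linearly independent. A subtlety to address is that Theorem \ref{theorem:independency} is stated only for finite components, so when one of the involved components is $\cone{\LAT}{H}$, a supplementary step is needed, comparing $\theta_{\tilde\nu}$ directly to an explicit low-degree generator supported on $\cone{\LAT}{H}$.
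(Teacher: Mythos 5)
Your proposal follows essentially the same route as the paper: the paper's own proof of Theorem~\ref{theorem:basis} is exactly ``$S$-linear independence of $\{\theta_\mu,\theta_\nu\}$ via Theorem~\ref{theorem:independency} (whose underlying Lemma~\ref{lemma:indep} performs precisely your reduction to a distance-two pair separated by a $\Delta=0$ vertex, followed by propagating dependence within each component), then Saito's criterion with the degree count $\deg\theta_\mu+\deg\theta_\nu=\numof{\mu\wedge\nu}$ and $\deg\alpha_{\mu,\kappa}+\deg\alpha_{\nu,\kappa}=\numof{\kappa}-\numof{\mu\wedge\nu}$'' (Lemma~\ref{lemma:basis}). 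The one subtlety you flag --- that Theorem~\ref{theorem:independency} is stated only for finite components, while the hypotheses here do not exclude $\mu$ or $\nu$ lying in some $\cone{\LAT}{H}$ --- is left equally unaddressed by the paper, which cites that theorem without comment; your sketched remedy (or simply observing that the paper's distance-two argument in Lemma~\ref{lemma:independency} never uses finiteness) would close it.
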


\begin{cor}
For each $\mu \in \innercone{\LAT}$,
we can construct a homogeneous basis for $D(\AAA,\mu)$
from the restricted map $\theta|_{\mpts{\Lat}}$.
\end{cor}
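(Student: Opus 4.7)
The plan is to deduce the corollary from Theorem \ref{theorem:basis}: for each $\mu\in\innercone{\LAT}$ I will exhibit two peaks $\mu_0,\nu_0\in\mpts{\Lat}$ lying in distinct maximal connected components of $\Lat$, satisfying $\Delta(\mu_0)+\Delta(\nu_0)=\distance{\mu_0}{\nu_0}$ and the sandwich condition $\mu_0\wedge\nu_0 \subset \mu \subset \mu_0\vee\nu_0$. Given such a pair, Theorem \ref{theorem:basis} produces the homogeneous basis $\Set{\alpha_{\mu_0,\mu}\theta_{\mu_0}, \alpha_{\nu_0,\mu}\theta_{\nu_0}}$ of $D(\AAA,\mu)$, which is built entirely out of the monomials $\alpha_{\mu_0,\mu},\alpha_{\nu_0,\mu}$ (determined only by $\mu_0,\nu_0,\mu$) and the values $\theta_{\mu_0},\theta_{\nu_0}$ of the restricted map $\theta|_{\mpts{\Lat}}$.

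To produce such a pair, I first invoke Theorem \ref{theorem:misc} to partition $\innercone{\LAT}$ into the finite components $C\in\fcc{\Lat}$ together with the isolated singletons of $\Delta^{-1}(0)$, and Theorem \ref{theorem:shape} to identify each $C$ with the open Hasse ball $\disk{\mu_C}{\Delta(\mu_C)}$ around its unique peak $\mu_C\in\mpts{C}$. I take $\mu_0$ to be the peak of a finite component either containing $\mu$ (when $\Delta(\mu)>0$) or containing a Hasse-neighbor of $\mu$ lying in $\innercone{\LAT}$ (when $\Delta(\mu)=0$), chosen so that $\mu$ lies on a geodesic issuing from $\mu_0$. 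Then I extend this geodesic past $\mu$ by steps that remain inside $\innercone{\LAT}$: along the walk, $\Delta$ decreases by one inside $C$ (Theorem \ref{theorem:shape}), vanishes when the walk exits into an isolated singleton of $\Delta^{-1}(0)$, and becomes positive again on entering a new component of $\Lat$; iterating, the walk eventually lands in a second finite component $C'\ne C$, whose peak I take as $\nu_0$. Applying Corollary \ref{cor:distance} to the successive segments of this walk yields $\distance{\mu_0}{\nu_0}=\Delta(\mu_0)+\Delta(\nu_0)$, while the geodesic property gives $\mu_0\wedge\nu_0\subset\mu\subset\mu_0\vee\nu_0$, so the hypothesis of Theorem \ref{theorem:basis} is met.

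The main obstacle is the combinatorial verification that the geodesic from $\mu_0$ through $\mu$ can always be continued inside $\innercone{\LAT}$: at each step one must pick a direction that avoids the unbalanced infinite cones $\cone{\LAT}{H}$ identified in Theorem \ref{theorem:misc}, and this is possible precisely because $\mu$ is balanced, so some coordinate $H$ with $\mu_H<\numof{\mu}/2$ is always available to be safely incremented (or, symmetrically, some coordinate with $\mu_H>0$ to be decremented). A handful of degenerate cases, namely $\mu=0$ or $\numof{\AAA}\leq 2$, where $\mpts{\Lat}$ may be empty or no balanced Hasse-neighbor of $\mu$ exists, must be handled separately by a direct description of $D(\AAA,\mu)$ in terms of $\der_x,\der_y$ and the $\alpha_H$.
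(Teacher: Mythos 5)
The paper states this corollary without proof, and your overall route---sandwich $\mu$ between the peaks $\mu_0,\nu_0$ of two adjacent finite components and invoke Theorem \ref{theorem:basis}---is clearly the intended one. Your reductions are also sound where they apply: if $\lambda'$ is the endpoint of a geodesic from $\mu_0$ through $\mu$ of length $\Delta(\mu_0)+1$ and $\lambda'\in\innercone{\LAT}$, then Corollary \ref{cor:distance} gives $\Delta(\lambda')=1$, disjointness of the balls plus the parity $\Delta(\kappa)\equiv\numof{\kappa}\pmod 2$ forces $\distance{\mu_0}{\nu_0}=\Delta(\mu_0)+\Delta(\nu_0)$ for the peak $\nu_0$ of the new component, and tightness of the triangle inequality through $\mu$ yields the sandwich condition. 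The problem is the step you yourself flag as ``the main obstacle'': the claim that the geodesic can always be continued inside $\innercone{\LAT}$ because some coordinate with $\mu_H<\frac{1}{2}\numof{\mu}$ can be ``safely incremented.'' This ignores the geodesic constraint: the continuation must move \emph{away} from $\mu_0$, so the coordinate you may change and the direction you may change it in are dictated by the walk already taken, and every admissible move can land outside $\innercone{\LAT}$ or outside $\NN^{\numof{\AAA}}$.

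Concretely, take $\AAA=\{\ker x,\ker y,\ker(x+y)\}$ and $\mu=(1,1,0)\in\innercone{\LAT}$. Here $\Delta(\mu)=0$, the only balanced Hasse-neighbor of $\mu$ is $\mu_0=(1,1,1)$ (a peak with $\Delta=1$), and the four continuations of the geodesic $(1,1,1)\coveredby(1,1,0)$, namely $(2,1,0)$, $(1,2,0)$, $(0,1,0)$, $(1,0,0)$, are all unbalanced, so the walk cannot proceed. Worse, the failure is not an artifact of your particular walk: for $\numof{\AAA}=3$ a balanced multiplicity with a zero coordinate is of the form $(a,a,0)$ up to permutation and has $\Delta=0$, so every peak has all coordinates positive; hence no two peaks $\mu_0,\nu_0$ can satisfy $\mu_0\wedge\nu_0\subset(1,1,0)$, and Theorem \ref{theorem:basis} by itself can never produce a basis for $D(\AAA,(1,1,0))$. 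This case is not covered by the degenerate cases you set aside ($\mu=0$ or $\numof{\AAA}\le 2$). To close the gap you would need either a genuinely different construction for such $\mu$ (for instance, producing the missing basis element from two peak derivations by an $S$-combination followed by division by linear forms, in the spirit of Lemma \ref{lemma:depincI} and Corollary \ref{lemma:pathbasis}, or a direct reduction to the subarrangement supporting $\mu$), or an explicit restriction on which $\mu\in\innercone{\LAT}$ the sandwich argument covers.
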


\section{Proofs of Main Results}\label{section:proof}
In this section, we prove the main results.
To prove them,
first we recall a result about  hyperplane arrangements and
derivation modules. The following is the two-dimensional version 
of the famous Saito's criterion, which is very useful to find the 
basis for $D(\AAA,m)$. See Theorem 8 in \cite{Z} and Theorem 4.19 in \cite{OT} 
for the proof. 

\begin{theorem}[Saito's criterion]
Let $(\AAA,\mu)$ be a $2$-multiarrangement and $\theta_1,\theta_2 \in D(\AAA,\mu)$. 
Then $\{\theta_1,\theta_2\}$ forms a basis for $D(\AAA,\mu)$ if and only if 
$\{\theta_1,\theta_2\}$ is independent and $\deg(\theta_1)+\deg(\theta_2)=|\mu|$.
\label{Saito}
\end{theorem}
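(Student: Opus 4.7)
My plan is to use the determinant of the coefficient matrix of $\{\theta_1,\theta_2\}$ as a bridge between the freeness side and the degree-sum side. Writing $\theta_i = f_i \der_x + g_i \der_y$, set $M(\theta_1,\theta_2) := f_1 g_2 - f_2 g_1 \in S$, and let $Q := \prod_{H \in \AAA} \alpha_H^{\mu(H)}$ be the defining polynomial of $(\AAA,\mu)$. The technical heart is the divisibility lemma: for any $\theta_1,\theta_2 \in D(\AAA,\mu)$, $Q$ divides $M(\theta_1,\theta_2)$ in $S$. To prove this I work locally at each $H$: fix a linear form $\beta \in V^{\ast}$ complementary to $\alpha_H$ and rewrite the $\theta_i$ in the dual basis $\{\der_{\alpha_H},\der_\beta\}$ as $\theta_i = f_i' \der_{\alpha_H} + g_i' \der_\beta$. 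The defining condition $\theta_i(\alpha_H) = f_i' \in \ideal{\alpha_H^{\mu(H)}}$ forces the first column of the new coefficient matrix to be divisible by $\alpha_H^{\mu(H)}$, hence so is its determinant $f_1' g_2' - f_2' g_1'$; this determinant differs from $M(\theta_1,\theta_2)$ only by the nonzero scalar $\det A$ of the change-of-basis matrix $A$, so $\alpha_H^{\mu(H)} \mid M(\theta_1,\theta_2)$. Pairwise coprimality of the linear forms $\alpha_H$ then yields $Q \mid M(\theta_1,\theta_2)$.

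For the ``if'' direction, assume $\theta_1,\theta_2$ are $S$-linearly independent with $\deg\theta_1 + \deg\theta_2 = |\mu|$. Then $M(\theta_1,\theta_2) \neq 0$ has polynomial degree $|\mu| = \deg Q$, so the lemma forces $M(\theta_1,\theta_2) = cQ$ for some $c\in\kk^{\ast}$. For any $\theta \in D(\AAA,\mu)$, Cramer's rule in $\mathrm{Frac}(S)$ expresses $\theta = h_1\theta_1 + h_2\theta_2$ with $h_1 = M(\theta,\theta_2)/M(\theta_1,\theta_2)$ and $h_2 = M(\theta_1,\theta)/M(\theta_1,\theta_2)$. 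Applying the divisibility lemma to the pairs $\{\theta,\theta_2\}$ and $\{\theta_1,\theta\}$ (both lying in $D(\AAA,\mu)$) shows each numerator is a multiple of $Q$, hence of $M(\theta_1,\theta_2)$, so $h_1,h_2 \in S$ and $\{\theta_1,\theta_2\}$ generates $D(\AAA,\mu)$.

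For the ``only if'' direction, $S$-linear independence of a basis is immediate. The degree-sum identity I would extract from the short exact sequence of graded $S$-modules
\[
0 \to D(\AAA,\mu) \hookrightarrow \Der{\kk}{S} \xrightarrow{\mathrm{ev}} \bigoplus_{H\in\AAA} S/(\alpha_H^{\mu(H)}) \to 0,
\]
with $\mathrm{ev}(\theta) = (\theta(\alpha_H) \bmod \alpha_H^{\mu(H)})_H$; surjectivity combines the componentwise surjectivity of $\theta \mapsto \theta(\alpha_H)$ with the Chinese Remainder Theorem. Using the freeness $D(\AAA,\mu) \cong S(-d_1)\oplus S(-d_2)$ and comparing numerators of the Hilbert series on both sides gives the polynomial identity
\[
t^{d_1} + t^{d_2} = 2 - |\AAA| + \sum_{H\in\AAA} t^{\mu(H)};
\]
differentiating and evaluating at $t=1$ yields $d_1 + d_2 = \sum_H \mu(H) = |\mu|$.

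The main obstacle is the local divisibility lemma in the first paragraph: it is the engine that converts the ideal-membership condition defining $D(\AAA,\mu)$ into a polynomial divisibility statement strong enough to drive Cramer's rule, and it is what makes $\deg M$ rigid enough to force the degree-sum identity. A secondary technical point is the surjectivity of $\mathrm{ev}$ in the forward direction, which I would handle by reducing to the ring-theoretic CRT applied to the pairwise coprime ideals $(\alpha_H^{\mu(H)})$.
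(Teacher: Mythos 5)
The paper does not actually prove this theorem; it cites Ziegler [Z, Theorem 8] and Orlik--Terao [OT, Theorem 4.19], so your argument must stand on its own. Your divisibility lemma and the ``if'' direction are correct and are the standard argument: the coordinate change at each $H$ gives $\alpha_H^{\mu(H)}\mid M(\theta_1,\theta_2)$, unique factorization gives $Q\mid M(\theta_1,\theta_2)$, independence plus the degree hypothesis forces $M(\theta_1,\theta_2)=cQ$ with $c\neq 0$, and Cramer's rule then shows the pair generates.

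There is, however, a genuine gap in the ``only if'' direction. Your sequence is not exact on the right once $\numof{\AAA}\geq 3$: the ideals $\ideal{\alpha_H^{\mu(H)}}$ are pairwise coprime as elements of the UFD $S$, but they are \emph{not} pairwise comaximal in $\kk[x,y]$ (already $(x)+(y)=(x,y)\neq S$), so the Chinese Remainder Theorem does not apply. Concretely, for $\AAA=\Set{\ker x,\ker y,\ker(x+y)}$ with $\mu\equiv 1$, the element $(0,0,1)$ is not in the image of $\mathrm{ev}$: $f\in(x)$ and $g\in(y)$ force $f+g\in(x,y)$, which cannot be congruent to $1$ modulo $x+y$. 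Accordingly your Hilbert-series identity fails as written: here the left side is $t+t^2$ while your right side is $3t-1$. The conclusion $d_1+d_2=\numof{\mu}$ does survive, because the cokernel $N$ of $\mathrm{ev}$ is supported at the origin (at any height-one prime other than $(x,y)$ at most one $\alpha_H$ vanishes, and for that single $H$ the component map $\theta\mapsto\theta(\alpha_H)$ is already surjective), hence $N$ has finite length and the two sides of your identity differ by $(1-t)^2$ times the polynomial Hilbert series of $N$; evaluating and differentiating once at $t=1$ then still yields $d_1+d_2=\sum_H\mu(H)$. But this localization/finite-length step is a missing idea, not a routine invocation of CRT, and must be supplied. (A cleaner route to the same end: for a homogeneous basis, localize at each $(\alpha_H)$ to see that $\alpha_H^{\mu(H)}$ divides $M(\theta_1,\theta_2)$ exactly and that no other irreducible divides it, so $M(\theta_1,\theta_2)$ equals $Q$ up to a nonzero scalar and $\deg\theta_1+\deg\theta_2=\deg Q=\numof{\mu}$ directly.)
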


\subsection{Proofs of Theorems \ref{theorem:misc} and \ref{theorem:shape}}

\begin{lemma}
\label{lemma:s}
If $\mu,\nu \in \LAT$ and $\mu \coveredby \nu$,
then $|\Delta(\mu)-\Delta(\nu)| =1$.
\end{lemma}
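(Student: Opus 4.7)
The plan is to compare $D(\AAA,\mu)$ and $D(\AAA,\nu)$ through two natural module containments and read off what happens to the exponents. Since $\mu \coveredby \nu$, there is a unique $H_0 \in \AAA$ with $\nu_{H_0} = \mu_{H_0} + 1$ and $\nu_{H'} = \mu_{H'}$ for all other $H'$; in particular $\numof{\nu} = \numof{\mu} + 1$. I would write $(d_1, d_2)$ and $(d_1', d_2')$ for the exponents in nondecreasing order of $(\AAA,\mu)$ and $(\AAA,\nu)$, so that Saito's criterion immediately yields $d_1' + d_2' = d_1 + d_2 + 1$.

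The key step is to establish the two inclusions $D(\AAA, \nu) \subseteq D(\AAA, \mu)$ and $\alpha_{H_0} \cdot D(\AAA, \mu) \subseteq D(\AAA, \nu)$. The first is immediate from $\mu_H \leq \nu_H$ for every $H$; the second follows because multiplication by $\alpha_{H_0}$ raises the order of vanishing along $H_0$ by exactly one, matching the unit increase $\nu_{H_0} - \mu_{H_0} = 1$, while the defining conditions at the remaining hyperplanes are unchanged. Combining these inclusions with the minimality of $\theta_\mu$ and $\theta_\nu$ in their respective derivation modules gives the two-sided bound $d_1 \leq d_1' \leq d_1 + 1$: the lower inequality because $\theta_\nu \in D(\AAA,\mu)$, and the upper inequality because $\alpha_{H_0}\theta_\mu \in D(\AAA,\nu)$ has degree $d_1 + 1$.

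With this bound and the degree-sum equation, only two possibilities remain: either $(d_1', d_2') = (d_1, d_2 + 1)$, producing $\Delta(\nu) = \Delta(\mu) + 1$, or $(d_1', d_2') = (d_1 + 1, d_2)$, producing $\Delta(\nu) = \Delta(\mu) - 1$. The one small point to watch is that the second case requires $d_1 + 1 \leq d_2$; when $\Delta(\mu) = 0$ this fails and we are forced into the first case, which still gives the desired $|\Delta(\mu) - \Delta(\nu)| = 1$. I do not anticipate a serious obstacle; the entire argument reduces to verifying the elementary inclusion $\alpha_{H_0} D(\AAA,\mu) \subseteq D(\AAA,\nu)$ and a short bookkeeping on degrees via Saito's criterion.
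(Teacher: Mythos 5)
Your argument is correct and is essentially the paper's own proof, which simply cites the inclusion $D(\AAA,\nu)\subseteq D(\AAA,\mu)$ together with Saito's criterion; you have filled in the details, adding the (equally standard) inclusion $\alpha_{H_0}D(\AAA,\mu)\subseteq D(\AAA,\nu)$ to obtain the upper bound $d_1'\leq d_1+1$. The bookkeeping, including the edge case $\Delta(\mu)=0$, is handled correctly.
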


\begin{proof}
It follows from the fact that $D(\AAA,\mu)\supset D(\AAA,\nu)$ 
and Saito's criterion. 
\end{proof}

\begin{lemma}
\label{lemma:bas}
Assume that $\mu,\nu \in \LAT'$ and $\mu \coveredby \nu$ 
with 
$\nu_H=\mu_H+1$ for some $H \in \AAA$. Then 
\begin{gather*}
\theta_\nu=
\begin{cases}
\alpha_H \theta_\mu & \text{if $\Delta(\mu) > \Delta(\nu)$,} \\
\theta_\mu  & \text{if $\Delta(\mu) < \Delta(\nu)$.} 
\end{cases}
\end{gather*}
\end{lemma}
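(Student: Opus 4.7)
The plan is to compare the lowest-degree pieces of $D(\AAA,\mu)$ and $D(\AAA,\nu)$, using that $\nu\supset\mu$ forces $D(\AAA,\nu)\subset D(\AAA,\mu)$. First I would set $a:=\deg\theta_\mu$ and $b:=\deg\theta_\nu$; Saito's criterion (Theorem \ref{Saito}) gives $|\mu|=2a+\Delta(\mu)$ and $|\nu|=2b+\Delta(\nu)$, and combining with $|\nu|=|\mu|+1$ and Lemma \ref{lemma:s} reduces to the two cases $\Delta(\nu)=\Delta(\mu)\pm 1$, yielding $b=a+1$ in the first case and $b=a$ in the second.

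The key observation I would then exploit is that, since $\mu,\nu\in\Lat'$, the minimum degree $a$ (resp.\ $b$) is strictly smaller than the degree of $\theta'_\mu$ (resp.\ $\theta'_\nu$); consequently the homogeneous piece $D(\AAA,\mu)_a=\kk\,\theta_\mu$ and $D(\AAA,\nu)_b=\kk\,\theta_\nu$ are each one-dimensional over $\kk$. This is what lets me recognize an element as $\theta_\mu$ or $\theta_\nu$ just from its degree and membership.

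In the case $\Delta(\mu)>\Delta(\nu)$, so $b=a+1$, I would verify that $\alpha_H\theta_\mu\in D(\AAA,\nu)$: for the given $H$, $\alpha_H\theta_\mu(\alpha_H)\in\ideal{\alpha_H^{\mu_H+1}}=\ideal{\alpha_H^{\nu_H}}$, and for $H'\neq H$ the condition is unchanged. Since this element is nonzero of degree $b$, by one-dimensionality it is a nonzero scalar multiple of $\theta_\nu$. In the case $\Delta(\mu)<\Delta(\nu)$, so $b=a$, I would invoke the inclusion $D(\AAA,\nu)\subset D(\AAA,\mu)$: $\theta_\nu$ is a nonzero homogeneous element of $D(\AAA,\mu)$ of degree $a$, and the one-dimensionality of $D(\AAA,\mu)_a$ forces $\theta_\nu\in\kk^\times\theta_\mu$.

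There is no real obstacle here; the only thing to be careful about is the justification of the one-dimensionality of the minimal graded component, which relies crucially on $\mu,\nu\in\Lat'$ (i.e.\ $\Delta>0$) so that the second basis vector sits strictly higher in degree. Everything else is a degree count plus Saito's criterion applied twice.
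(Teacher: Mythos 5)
Your proof is correct and follows essentially the same route as the paper's: a degree count via Saito's criterion combined with the fact that, since $\Delta>0$ on $\Lat$, the lowest-degree graded piece of each derivation module is one-dimensional, so that $\alpha_H\theta_\mu$ (resp.\ $\theta_\nu$ viewed inside $D(\AAA,\mu)$) is forced to be a scalar multiple of $\theta_\nu$ (resp.\ $\theta_\mu$). You merely spell out the one-dimensionality and the membership check $\alpha_H\theta_\mu\in D(\AAA,\nu)$ that the paper leaves implicit.
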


\begin{proof} 
Fix a homogeneous basis 
$\Set{\theta_\mu, \theta'}$
 for $D(\AAA,\mu)$,
where $\deg(\theta_\mu) < \deg(\theta') $.
If $\Delta(\mu) > \Delta(\nu)$, then 
Saito's criterion implies $\theta_\mu \not\in D(\AAA,\nu)$.
Since $\alpha_H\theta_{\mu}\in D(\AAA,\nu)$, Lemma \ref{lemma:s}
implies $\alpha_H \theta_\mu$ is a part of a 
homogeneous basis for $D(\AAA,\nu)$.
Hence we may assume that 
$\Set{\alpha_H\theta_\mu, \theta''}$ is a basis for
 $D(\AAA,\nu)$.
If $\Delta(\mu) < \Delta(\nu)$, then 
$\theta_\mu \in D(\AAA,\nu)$,
which completes the proof.
\end{proof}

\begin{cor}
\label{lemma:pathbasis}
Let 
$\mu,\nu \in C \in \cc{\Lat}$ with $\mu \subset \nu$, and $\rho$ be a saturated chain
from $\mu$ to $\nu$.
Then $\theta_\nu=\downalpha{\rho} \theta_\mu$.
\end{cor}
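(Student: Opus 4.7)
The plan is a straightforward induction on the length $k = \distance{\mu}{\nu}$ of the saturated chain $\rho = (\rho^{(0)},\ldots,\rho^{(k)})$ from $\rho^{(0)} = \mu$ to $\rho^{(k)} = \nu$. The base case $k=0$ is immediate: $\mu = \nu$, the product $\downalpha{\rho}$ is empty and hence equals $1$, so the identity $\theta_\nu = \theta_\mu$ holds tautologically.

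For the inductive step, I would split off the final edge of the chain. The truncation $\rho' = (\rho^{(0)},\ldots,\rho^{(k-1)})$ is a saturated chain from $\mu$ to $\rho^{(k-1)}$ of length $k-1$, so the inductive hypothesis gives $\theta_{\rho^{(k-1)}} = \downalpha{\rho'}\,\theta_\mu$. Applying Lemma~\ref{lemma:bas} to the final covering relation $\rho^{(k-1)} \coveredby \rho^{(k)}$ with $\rho^{(k)}_H = \rho^{(k-1)}_H + 1$ yields
\[
\theta_{\rho^{(k)}} =
\begin{cases}
\alpha_H\,\theta_{\rho^{(k-1)}} & \text{if } \Delta(\rho^{(k-1)}) > \Delta(\rho^{(k)}), \\
\theta_{\rho^{(k-1)}} & \text{if } \Delta(\rho^{(k-1)}) < \Delta(\rho^{(k)}),
\end{cases}
\]
and these two cases are exhaustive by Lemma~\ref{lemma:s}. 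The defining formula for $\downalpha{\rho}$ appends exactly the factor $\alpha_H$ in the first case and appends nothing in the second, so in either case $\theta_\nu = \theta_{\rho^{(k)}} = \downalpha{\rho}\,\theta_\mu$.

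The main subtle point is that Lemma~\ref{lemma:bas} requires both endpoints of every covering step to lie in $\Lat$, so each intermediate vertex $\rho^{(i)}$ must lie in $\Lat$ in order that $\theta_{\rho^{(i)}}$ be defined up to scalar. I would handle this by restricting attention to chains $\rho$ lying inside the component $C \in \cc{\Lat}$. Such a chain exists between any two comparable elements $\mu \subset \nu$ of $C$: Theorem~\ref{theorem:shape} describes $C$ as a ball $\disk{\mu_0}{\Delta(\mu_0)}$ around its center $\mu_0$, and Theorem~\ref{theorem:misc} ensures every component of $\LAT \setminus \Lat$ is a singleton, so a greedy monotone construction from $\mu$ toward $\nu$ (prioritizing at each step a coordinate that brings the current vertex closer to $\mu_0$) stays entirely inside $C$. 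Once the identity is proved for one such in-$C$ chain, it holds for any saturated chain from $\mu$ to $\nu$: since $\theta_\mu$ and $\theta_\nu$ are each fixed up to nonzero scalar, the equality $\theta_\nu = \downalpha{\rho}\,\theta_\mu$ pins down $\downalpha{\rho}$ regardless of the chain taken.
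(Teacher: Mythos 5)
Your induction is exactly the paper's proof---the paper's entire argument is ``Apply Lemma~\ref{lemma:bas} repeatedly''---so the core of your write-up matches, and you are right to flag the subtlety the paper glosses over: Lemma~\ref{lemma:bas} requires both endpoints of each covering step to lie in $\Lat$, so the induction only runs verbatim along chains whose intermediate vertices all have $\Delta>0$. However, your final step for extending to an \emph{arbitrary} saturated chain is circular: knowing the identity for one in-$C$ chain does not tell you that the combinatorially defined product $\downalpha{\rho}$ for a different chain $\rho$ (one passing through a point with $\Delta=0$) takes the same value; saying the equality ``pins down'' $\downalpha{\rho}$ presupposes exactly the equality you are trying to establish for that chain. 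To genuinely cover such chains you would need a diamond-exchange argument in the spirit of Lemma~\ref{lemma:latA:Detail} showing that $\downalpha{\rho}$ is independent of the choice of saturated chain, or else read the corollary, as the paper implicitly does, as a statement about chains contained in $\Lat$. Two smaller points: your appeal to Theorem~\ref{theorem:shape} is a forward reference (harmless logically, since its proof does not use this corollary, but worth noting), and it only produces an in-$C$ chain for $C\in\fcc{\Lat}$; for $C\in\icc{\Lat}$, i.e.\ $C=\cone{\LAT}{H}$, a monotone in-$C$ chain exists by raising the $H$-coordinate first, but that case needs its own sentence.
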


\begin{proof}
Apply Lemma \ref{lemma:bas} repeatedly.
\end{proof}

\begin{lemma}
\label{lemma:unimodal}
Let $C \in \cc{\Lat}$, $\mu \in C$, and $H \in \AAA$.
If $\numof{\sectional{C}{\mu}{H}}<\infty$, then
$\Delta |_{\sectional{C}{\mu}{H}} $ is unimodal, or equivalently,
there exists a unique element $\kappa\in \sectional{C}{\mu}{H}$ such that
\begin{align*}
&\text{$\Delta(\nu')\leq \Delta(\nu)$ for $\nu'\subset \nu \subset \kappa$ or $\kappa \subset \nu \subset \nu'.$ }
\end{align*}
If $\numof{\sectional{C}{\mu}{H}}=\infty$, then
$\Delta |_{\sectional{C}{\mu}{H}} $ is monotonic, or equivalently,
\begin{align*}
&\text{$\Delta(\nu')\leq \Delta(\nu)$ for $\nu'\subset \nu$. }
\end{align*}
\end{lemma}

\begin{proof}
Let
$\nu,\nu',\nu'' \in \sectional{C}{\mu}{H}$
satisfy  $\nu\coveredby\nu'\coveredby\nu''$.
Assume that 
$\Delta(\nu) > \Delta(\nu')<\Delta(\nu'')$.
By Lemma \ref{lemma:bas}, we may choose a basis 
$\Set{\alpha_H\theta_{\nu}, \theta'}$ for $D(\AAA,\nu')$ such that 
$\Set{\alpha_H\theta_{\nu}, \alpha_H \theta'}$ is a basis for 
$D(\AAA,\nu'')$. 
Hence 
$\alpha_H \theta_{\nu}(\alpha_H) \in \ideal{\alpha_H^{\nu''_{H}}}=\ideal{\alpha_H^{\nu_{H}+2}}$ and 
$\theta_{\nu}(\alpha_H) \in \ideal{ \alpha_H^{\nu_{H}+1} }$. 
Then $\theta_{\nu} \in D(\AAA,\nu')$, which is a contradiction. 
Since it follows from Lemma \ref{lemma:s} that
 $\min\Set{ \Delta(\mu')| \mu' \in \sectional{C}{\mu}{H}}=1$,
we have the lemma.
\end{proof}

\begin{definition}
For $H \in \AAA$, $C \in \fcc{\Lat}$ and $\mu \in C$, we may choose, by Lemma 
\ref{lemma:unimodal}, the unique element 
$\kappa \in C_{\mu,H}$ such that 
$\Delta(\kappa) \ge \Delta(\mu')$ for any $\mu' \in C_{\mu,H}$. We call this 
$\kappa$ the \defit{peak element with respect to $\sectional{C}{\mu}{H}$}.
\end{definition}

\begin{cor}
Let $C \in \fcc{\Lat}$, $\mu \in C$ and $H \in \AAA$. 
Let $\kappa \in C$ be the peak element with respect to $\sectional{C}{\mu}{H}$. 
Then, for $\mu' \in \sectional{C}{\mu}{H}$, 
\begin{gather*}
\theta_{\mu'}=
\begin{cases}
\theta_\kappa & (\mu' \subset \kappa),\\
\alpha_H^{\numof{\mu'} - \numof{\kappa}} \theta_\kappa & ( \kappa \subset \mu').
\end{cases}
\end{gather*}
\end{cor}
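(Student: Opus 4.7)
The plan is to combine the unimodality of $\Delta$ along $\sectional{C}{\mu}{H}$ (Lemma \ref{lemma:unimodal}) with the transfer rule for $\theta$ under a covering step (Lemma \ref{lemma:bas}), walking along a saturated chain between $\mu'$ and $\kappa$ that varies only the $H$-component.

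First I would fix a saturated chain $\rho = (\rho^{(0)}, \ldots, \rho^{(k)})$ inside $\sectional{C}{\mu}{H}$ joining $\min\set{\mu',\kappa}$ to $\max\set{\mu',\kappa}$, where each covering step increments the $H$-coordinate by $1$ (such a chain is unique because only the $H$-component changes). I must verify that every intermediate $\rho^{(i)}$ actually lies in $C$, so that Lemma \ref{lemma:bas} is applicable at each step. This is immediate from Lemma \ref{lemma:unimodal}: for any $\nu$ with $\mu' \subset \nu \subset \kappa$ or $\kappa \subset \nu \subset \mu'$, unimodality forces $\Delta(\nu) \geq \min(\Delta(\mu'), \Delta(\kappa)) > 0$, hence $\nu \in \Lat$; being in $\sectional{C}{\mu}{H}$ and connected to $\mu'$ by single-step moves in $\Lat$, it lies in the connected component $C$.

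Next I would split into the two cases. When $\mu' \subset \kappa$, the peak property says $\Delta$ is (weakly) strictly increasing along $\rho$: indeed, unimodality combined with $|\Delta(\rho^{(i)})-\Delta(\rho^{(i+1)})|=1$ (Lemma \ref{lemma:s}) rules out a plateau and forces $\Delta(\rho^{(i)}) < \Delta(\rho^{(i+1)})$ for each $i$. Then Lemma \ref{lemma:bas} gives $\theta_{\rho^{(i+1)}} = \theta_{\rho^{(i)}}$ at every step, so iterating yields $\theta_{\mu'} = \theta_\kappa$. When $\kappa \subset \mu'$, the same unimodality argument yields $\Delta(\rho^{(i)}) > \Delta(\rho^{(i+1)})$ at every step, so Lemma \ref{lemma:bas} gives $\theta_{\rho^{(i+1)}} = \alpha_H \theta_{\rho^{(i)}}$; after $k = \numof{\mu'} - \numof{\kappa}$ steps this accumulates to $\theta_{\mu'} = \alpha_H^{\numof{\mu'}-\numof{\kappa}}\theta_\kappa$, as claimed. (This second case is also a direct application of Corollary \ref{lemma:pathbasis}, since every step strictly decreases $\Delta$ so $\downalpha{\rho} = \alpha_H^k$.)

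The only real subtlety — and what I expect to be the main thing to justify carefully — is the strict monotonicity of $\Delta$ along $\rho$. A priori Lemma \ref{lemma:unimodal} only guarantees weak monotonicity on either side of $\kappa$, but $\Delta$ changes by exactly $\pm 1$ at each covering step (Lemma \ref{lemma:s}), so a plateau is impossible; this upgrades weak to strict monotonicity and is what lets us invoke the correct branch of Lemma \ref{lemma:bas} at every step.
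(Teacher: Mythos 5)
Your proof is correct and follows the paper's intended route exactly: the paper's own proof is the one-line ``Apply Lemmas \ref{lemma:bas} and \ref{lemma:unimodal},'' and your write-up is precisely the expansion of that, walking a saturated chain along the $H$-line, using unimodality plus the $\pm 1$ step size from Lemma \ref{lemma:s} to get strict monotonicity of $\Delta$ on each side of the peak, and then applying the appropriate branch of Lemma \ref{lemma:bas} at every step. Your attention to the two points the paper leaves implicit --- that the intermediate elements of the chain lie in $C$ so that Lemma \ref{lemma:bas} applies, and that plateaus are impossible --- is exactly the right thing to check.
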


\begin{proof}
Apply Lemmas \ref{lemma:bas} and \ref{lemma:unimodal}.
\end{proof}

\begin{lemma}\label{lemma:latA:Detail}
Let $C\in \cc{\Lat}$, and $\kappa, \mu,\mu',\nu \in \LAT$. 
Assume that 
$\kappa \coveredby \mu \coveredby \nu$,
$\kappa \coveredby \mu' \coveredby \nu$, and 
$\mu \neq \mu'$.
\begin{enumerate}
\item\label{lemma:latA:case:I}
Assume that $\kappa, \mu , \mu' \in C$. Then
\begin{align*}
\text{$\Delta(\kappa)>\Delta(\mu)$ and  $\Delta(\kappa)>\Delta(\mu')$}
&\implies 
\text{$\Delta(\mu)>\Delta(\nu)$ and  $ \Delta(\mu')>\Delta(\nu);$}\\
\text{$\Delta(\kappa)<\Delta(\mu)$ and  $ \Delta(\kappa)<\Delta(\mu')$}
&\implies 
\text{$\Delta(\mu)<\Delta(\nu)$ and  $ \Delta(\mu')<\Delta(\nu) ;$ and}\\
\text{$\Delta(\kappa)<\Delta(\mu)$ and  $ \Delta(\kappa)>\Delta(\mu')$}
&\implies 
\text{$\Delta(\mu)>\Delta(\nu)$ and  $ \Delta(\mu')<\Delta(\nu)$} .
\end{align*}
\item\label{lemma:latA:case:II}
Assume that $ \mu , \mu',\nu \in C$. Then
\begin{align*}
\text{$\Delta(\mu)>\Delta(\nu)$ and  $ \Delta(\mu')>\Delta(\nu)$}
&\implies 
\text{$\Delta(\kappa)>\Delta(\mu)$ and  $ \Delta(\kappa)>\Delta(\mu');$} \\
\text{$\Delta(\mu)<\Delta(\nu)$ and  $ \Delta(\mu')<\Delta(\nu)$}
&\implies 
\text{$\Delta(\kappa)<\Delta(\mu)$ and  $ \Delta(\kappa)<\Delta(\mu');$ and},\\
\text{$\Delta(\mu)<\Delta(\nu)$ and  $ \Delta(\mu')>\Delta(\nu)$}
&\implies 
\text{$\Delta(\kappa)>\Delta(\mu)$ and  $ \Delta(\kappa)<\Delta(\mu')$} .
\end{align*}
\item\label{lemma:latA:case:III}
Assume that $ \kappa,\mu ,\nu \in C$. Then
\begin{align*}
\Delta(\kappa) > \Delta(\mu) > \Delta(\nu)
&\implies 
\Delta(\kappa) > \Delta(\mu') > \Delta(\nu);\\
\Delta(\kappa) < \Delta(\mu) < \Delta(\nu)
&\implies 
\Delta(\kappa) < \Delta(\mu') < \Delta(\nu);\\
\Delta(\kappa) < \Delta(\mu) > \Delta(\nu)
&\implies 
\Delta(\kappa) > \Delta(\mu') < \Delta(\nu);\text{ and}\\
\Delta(\kappa) > \Delta(\mu) < \Delta(\nu)
&\implies 
\Delta(\kappa) < \Delta(\mu') > \Delta(\nu).
\end{align*}
\end{enumerate}
\end{lemma}

\begin{proof}
$(\ref{lemma:latA:case:I})$ 
Assume that 
$\kappa_H +1= \mu_H$ and $\kappa_{H'} +1= \mu'_{H'}$ for 
some $H \neq H' \in \AAA$. 
Since $\nu = \mu \vee \mu'$, $\mu_{H'} +1= \nu_{H'}$ and 
$\mu'_{H}+1= \nu_{H}$. 
First we consider the case when 
$\Delta(\kappa)>\Delta(\mu)$ and $\Delta(\kappa)>\Delta(\mu')$. Then 
$\Delta(\mu)=\Delta(\mu')$. 
It follows from Lemma \ref{lemma:bas} that
$\theta_\mu    = \alpha_H \theta_\kappa$, $\theta_{\mu'} = \alpha_{H'} \theta_\kappa$.
If $\Delta(\mu)=\Delta(\mu')<\Delta(\nu)$, then
$\Delta(\nu)>0$, i.e., $\nu \in \Lat$. Then 
Lemma \ref{lemma:bas} implies that
\begin{align*}
\alpha_{H'} \theta_\kappa= \theta_{\mu'}=
\theta_\nu= \theta_\mu = \alpha_H \theta_\kappa,
\end{align*}
which is a contradiction.

Next we consider the case when 
$\Delta(\kappa)<\Delta(\mu)$ and $\Delta(\kappa)<\Delta(\mu')$. Then 
$\Delta(\mu)=\Delta(\mu')$ and
 $\Delta(\kappa)\leq \Delta(\nu)$.  
Hence $\nu\in C$.
It follows from Lemma \ref{lemma:bas} that
$\theta_\mu    =  \theta_\kappa$,
$\theta_{\mu'} =  \theta_\kappa$.
If $\Delta(\mu)=\Delta(\mu')>\Delta(\mu)$, then
Lemma \ref{lemma:bas} implies that
\begin{align*}
\alpha_{H} \theta_\kappa=\alpha_{H}\theta_{\mu'} = \theta_\nu&=
 \alpha_{H'}\theta_\mu = \alpha_{H'} \theta_\kappa,
\end{align*}
which is a contradiction.

Finally we consider the case when 
$\Delta(\kappa)<\Delta(\mu)$ and $\Delta(\kappa)>\Delta(\mu')$. Then 
$\Delta(\mu)-1=\Delta(\mu')+1 =\Delta(\kappa)$.
Hence $\Delta(\nu) =\Delta(\mu)-1= \Delta(\mu')+1=\Delta(\kappa)$.

The same argument is valid for $(\ref{lemma:latA:case:II})$ and
 $(\ref{lemma:latA:case:III})$, which completes the proof.
\end{proof}

\begin{remark}
In cases $(\ref{lemma:latA:case:I})$, $(\ref{lemma:latA:case:II})$ and
 $(\ref{lemma:latA:case:III})$
in  Lemma \ref{lemma:latA:Detail}, 
$\Delta(\nu)=0$, $\Delta(\kappa)=0$ and $\Delta(\mu')=0$ may happen, respectively.  
\end{remark}

\begin{lemma}
\label{lemma:latA}
Let $\mu,\mu'\in \LAT$ such that $\numof{\mu}=\numof{\mu'}$,
 $\mu\neq\mu'$ and $\distance{\mu}{\mu'}=2$.
Then the following are equivalent:
\begin{enumerate}
\item 
\label{lemma:latA:1}
At least three of $\Set{\mu\wedge\mu',\mu,\mu',\mu\vee\mu'}$
are in the same connected component $C\in\cc{\Lat}$.
\item 
\label{lemma:latA:1.5}
At least three of $\Delta(\mu\wedge\mu')$, $\Delta(\mu)$,
$\Delta(\mu')$ and $\Delta(\mu\vee\mu')$
are positive.
\item 
\label{lemma:latA:2}
$\Delta(\mu\vee\mu')-\Delta(\mu') =\Delta(\mu)-\Delta(\mu\wedge\mu')$.
\item 
\label{lemma:latA:3}
$\Delta(\mu\vee\mu')-\Delta(\mu) =\Delta(\mu')-\Delta(\mu\wedge\mu')$.
\end{enumerate}
\end{lemma}
\begin{proof}
By the assumption, 
$|\mu\vee\mu'|-1=|\mu|=|\mu'|=|\mu\wedge\mu'|+1$.
It follows from Lemma \ref{lemma:s} that
$|\Delta(\mu\vee\mu')-\Delta(\mu')|=| \Delta(\mu)-\Delta(\mu\wedge\mu')| =|\Delta(\mu\vee\mu')-\Delta(\mu) |=|\Delta(\mu')-\Delta(\mu\wedge\mu')|=1$.
It is clear that
Conditions (\ref{lemma:latA:2}) and (\ref{lemma:latA:3}) are equivalent.
It is also clear that
Conditions (\ref{lemma:latA:1}) and (\ref{lemma:latA:1.5}) are equivalent.
It follows from Lemma \ref{lemma:latA:Detail}
that Condition (\ref{lemma:latA:1}) implies Condition (\ref{lemma:latA:2}).
Now we show that Condition (\ref{lemma:latA:2}) implies Condition (\ref{lemma:latA:1.5}).
If two of $\Delta(\mu\vee\mu')$, $\Delta(\mu')$, $\Delta(\mu)$
and $\Delta(\mu\wedge\mu')$ are zero, 
then Lemma \ref{lemma:s} shows that we have one of the following two:
\begin{itemize}
\item
$\Delta(\mu\vee\mu')=\Delta(\mu\wedge\mu')=1$ and
     $\Delta(\mu')=\Delta(\mu)=0$; or
\item 
$\Delta(\mu\vee\mu')=\Delta(\mu\wedge\mu')=0$ and 
$\Delta(\mu')=\Delta(\mu)=1$.
\end{itemize}
Both of them contradicts Condition (\ref{lemma:latA:2}).
\end{proof}

\begin{lemma}
\label{lemma:shape}
For $\mu \in C \in \cc{\Lat}$, define $X_\mu$ by
$X_\mu := \bigcup_{H\in \AAA} \sectional{C}{\mu}{H}$. 
If $\mu$ satisfies 
$\Delta(\mu) = \max \Set{\Delta(\nu)|\nu\in X_\mu}$,
then
\begin{gather*}
\Delta(\kappa)=\Delta(\mu) - \distance{\kappa}{\mu}
\end{gather*}
 for 
$\kappa\in \LAT$ with $\distance{\kappa}{\mu} \leq \Delta(\mu)$. 
In particular, $ C $ is the ball $\disk{\mu}{\Delta(\mu)}$.
\end{lemma}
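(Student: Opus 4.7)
The plan is to establish the formula $\Delta(\kappa) = \Delta(\mu) - \distance{\kappa}{\mu}$ by induction on $d := \distance{\kappa}{\mu}$ for all $\kappa\in\LAT$ with $d\leq \Delta(\mu)$; the identification $C = \disk{\mu}{\Delta(\mu)}$ will then follow by a short boundary argument. For $d=0$ the claim is trivial, and for $d=1$ it follows from the maximality hypothesis together with Lemma \ref{lemma:s}: any neighbor $\kappa$ of $\mu$ with $\Delta(\kappa) > \Delta(\mu)$ would have $\Delta(\kappa) > 0$, would lie in $\Lat$ and hence in $C$, and therefore would belong to $X_\mu$, contradicting maximality.

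For $d\geq 2$ I distinguish two cases according to the support of $\kappa-\mu$. If $\kappa$ differs from $\mu$ in only one coordinate $H$, then by induction the chain $\mu = \rho^{(0)}, \rho^{(1)}, \ldots, \rho^{(d)} = \kappa$ that moves one step at a time in the $H$-direction lies in $\sectional{C}{\mu}{H}$ through its penultimate vertex. Lemma \ref{lemma:unimodal} makes $\Delta$ unimodal on $\sectional{C}{\mu}{H}$, and the maximality hypothesis forces the peak to sit at $\mu$, so $\Delta$ strictly decreases along the chain and $\Delta(\kappa) = \Delta(\mu) - d$ follows (the borderline subcase $\Delta(\kappa) = 0$ being handled by Lemma \ref{lemma:s}).

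If instead $\kappa$ differs from $\mu$ in at least two coordinates $H\neq H'$, I let $\kappa'$, $\kappa''$, and $\kappa'''$ be obtained from $\kappa$ by moving one step toward $\mu$ in coordinate $H$, in coordinate $H'$, and in both coordinates, respectively. By induction $\Delta(\kappa') = \Delta(\kappa'') = \Delta(\mu) - d + 1 \geq 1$ and $\Delta(\kappa''') = \Delta(\mu) - d + 2 \geq 2$, so all three lie in $C$, and together with $\kappa$ they form a diamond in the Hasse graph. Depending on whether the two moves toward $\mu$ have the same sign or opposite signs, the extreme pair of the diamond is either $\{\kappa''', \kappa\}$ or $\{\kappa', \kappa''\}$, and clause (\ref{lemma:latA:case:I}) or clause (\ref{lemma:latA:case:III}) of Lemma \ref{lemma:latA:Detail} applies and forces $\Delta(\kappa) < \Delta(\kappa') = \Delta(\mu) - d + 1$; Lemma \ref{lemma:s} then pins down $\Delta(\kappa) = \Delta(\mu) - d$.

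Given the formula, the inclusion $\disk{\mu}{\Delta(\mu)} \subset C$ is automatic since every point in the open ball has positive $\Delta$ and is connected to $\mu$ inside $\Lat$. For the reverse inclusion I argue by contradiction: if some $\nu\in C$ satisfied $\distance{\mu}{\nu}\geq \Delta(\mu)$, then a path from $\mu$ to $\nu$ inside $C$ would contain an edge from a vertex at distance strictly less than $\Delta(\mu)$ to a vertex at distance exactly $\Delta(\mu)$, and the formula would force the latter endpoint to have $\Delta = 0$, contradicting its membership in $\Lat$. The main technical hurdle is the diamond analysis: correctly matching the sign pattern of $\kappa - \mu$ in the chosen pair of coordinates to the appropriate clause of Lemma \ref{lemma:latA:Detail}, and verifying that the three neighbors supplied to the clause indeed lie in $C$, which the induction secures via their strictly positive $\Delta$-values.
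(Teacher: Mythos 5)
Your proof is correct and follows essentially the same route as the paper, whose own proof of this lemma is just the one-line instruction to apply the diamond lemma (Lemma \ref{lemma:latA}) repeatedly: your induction on $\distance{\kappa}{\mu}$, with the single-coordinate case handled by unimodality (Lemma \ref{lemma:unimodal}) and the multi-coordinate case by the diamond analysis of Lemma \ref{lemma:latA:Detail}, is exactly the intended expansion of that remark. One small correction: when both coordinates of $\kappa-\mu$ have the same sign there are two orientations of the diamond --- if $\kappa$ sits at the top, clause (\ref{lemma:latA:case:I}) applies as you say, but if $\kappa$ sits at the bottom the hypotheses you can verify are those of clause (\ref{lemma:latA:case:II}), not (\ref{lemma:latA:case:I}) or (\ref{lemma:latA:case:III}); the conclusion $\Delta(\kappa)<\Delta(\kappa')$ is the same and the argument is otherwise unaffected.
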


\begin{proof}

If $\Delta(\mu)=1$ then there is nothing to prove. 
Assume that $\Delta(\mu)>1$. Since $\mu$ satisfies 
$\Delta(\mu) = \max \Set{\Delta(\nu)|\nu\in X_\mu}$,
it follows from Lemma \ref{lemma:unimodal}
that $\Delta(\kappa)=\Delta(\mu) - \distance{\kappa}{\mu}$
for $\kappa \in X_\mu$.
In particular, we have the lemma for $\distance{\kappa}{\mu}=1$.

Now we prove the lemma by the induction on $\distance{\kappa}{\mu}$.
Let $\distance{\kappa}{\mu}>1$. 
By the previous paragraph it suffices to show 
when $\kappa \not \in X_\mu$. 
In this case, there exists $H'\neq H''$ such that
$\mu_{H'} \neq \kappa_{H'}$
and 
$\mu_{H''} \neq \kappa_{H''}$.
Let us define $\kappa'$, $\kappa''$, $\kappa'''$ by
\begin{align*}
\kappa'_H&=
\begin{cases}
\kappa_H    &\text{if $H\neq H'$,}\\
\kappa_{H'}-1 &\text{if $H= H'$ and $\kappa_{H'} > \mu_{H'}$,}\\
\kappa_{H'}+1 &\text{if $H=H'$  and $\kappa_{H'} < \mu_{H'}$,}
\end{cases}\\
\kappa''_H&=
\begin{cases}
\kappa_H    &\text{if $H\neq H''$,}\\
\kappa_{H''}-1 &\text{if $H= H''$ and $\kappa_{H''} > \mu_{H''}$,}\\
\kappa_{H''}+1 &\text{if $H=H''$  and $\kappa_{H''} < \mu_{H''}$,}
\end{cases}\\
\kappa'''_H&=
\begin{cases}
\kappa_H    &\text{if $H'\neq H\neq H''$,}\\
\kappa'_{H'} &\text{if $H= H'$,}\\
\kappa''_{H''} &\text{if $H=H''$.}
\end{cases}
\end{align*}
Then 
$\distance{\kappa}{\mu}-1=\distance{\kappa'}{\mu}=\distance{\kappa''}{\mu}=\distance{\kappa'''}{\mu}+1$.
By the induction hypothesis, 
 $\Delta(\kappa')=\Delta(\kappa'')=\Delta(\kappa''')-1=\Delta(\mu) - \distance{\kappa}{\mu}+1>0$.
It follows from Lemma \ref{lemma:latA} that
$\Delta(\kappa)-\Delta(\kappa')=\Delta(\kappa'') - \Delta(\kappa''')=-1$.
Hence $\Delta(\kappa)=\Delta(\mu) - \distance{\kappa}{\mu}$.
\end{proof}

\begin{proof}[Proof of Theorem \ref{theorem:shape}]
Let $C\in \fcc{\Lat}$ and $\mu \in \mpts{C}$.
Then it follows from Lemma \ref{lemma:unimodal} that
$\Delta |_{\sectional{C}{\mu}{H}}$ is unimodal for all $H \in \AAA$.
Hence Lemma \ref{lemma:shape} completes the proof.
\end{proof}

\begin{lemma}
\label{lemma:preservemax}
Let $H\in\AAA$ and $\mu,\mu' \in C\in \icc{\Lat}$ 
satisfy $\mu \coveredby \mu'$ with 
$\mu_H + 1 = \mu'_H $.
If $\numof{\sectional{C}{\mu}{H'}}<\infty$ for some $H'\in \AAA\setminus\Set{H}$, 
then $\numof{\sectional{C}{\mu'}{H'}}<\infty$.
Moreover, 
for $H'\in \AAA\setminus\Set{H}$,
$\mu$ is the peak element with respect to $\sectional{C}{\mu}{H'}$
if and only if
$\mu'$ is the peak element with respect to $\sectional{C}{\mu'}{H'}$. 
\end{lemma}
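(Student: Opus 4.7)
The plan is to compare $\sectional{C}{\mu}{H'}$ and $\sectional{C}{\mu'}{H'}$ by ``parallel transport'' of the $\Delta$-values across the cover $\mu \coveredby \mu'$. Set $k_{0} := \mu_{H'} = \mu'_{H'}$, and for $k \in \NN$ let $\nu_{k}$ (resp.\ $\nu'_{k}$) be the element of $\LAT$ obtained from $\mu$ (resp.\ $\mu'$) by replacing its $H'$-coordinate by $k$. Then $\nu_{k_{0}} = \mu$, $\nu'_{k_{0}} = \mu'$, and $\nu_{k} \coveredby \nu'_{k}$ is a cover in the $H$-direction for every $k$. Put $f(k) := \Delta(\nu_{k})$ and $g(k) := \Delta(\nu'_{k})$; Lemma \ref{lemma:s} gives $|f(k+1) - f(k)| = |g(k+1) - g(k)| = |g(k) - f(k)| = 1$, so $c := g(k_{0}) - f(k_{0}) \in \{-1, +1\}$.

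The central step is a parallel-transport identity: at any $k$ for which at least three of $\nu_{k}, \nu_{k+1}, \nu'_{k}, \nu'_{k+1}$ lie in $C$, one has $g(k+1) - f(k+1) = g(k) - f(k)$. To see this, apply Lemma \ref{lemma:latA} with $(\mu, \mu') = (\nu_{k+1}, \nu'_{k})$; one checks $|\nu_{k+1}| = |\nu'_{k}|$, $\nu_{k+1} \wedge \nu'_{k} = \nu_{k}$, and $\nu_{k+1} \vee \nu'_{k} = \nu'_{k+1}$, and the lemma's equivalent inequality $(g(k+1) - f(k+1))(g(k) - f(k)) > 0$ then forces equality of these two $\pm 1$ quantities. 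The three-in-$C$ hypothesis is maintained inductively from $k_{0}$, using that a cover between two $\Lat$-elements forces them to share a component together with $|g - f| = 1$.

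For the first claim, suppose $\sectional{C}{\mu}{H'}$ is finite; let $[a, b]$ be the maximal interval about $k_{0}$ on which $\nu_{k} \in C$, so $b < \infty$, $f(b) = 1$, and $f(b+1) = 0$. Parallel transport yields $g(b) = f(b) + c \in \{0, 2\}$. If $c = -1$, then $g(b) = 0$ and the $\mu'$-section terminates no later than $b - 1$ on the right. If $c = +1$, then $g(b) = 2$ and $g(b+1) = 1$, placing $\nu'_{b+1}$ in $C$; the same analysis repeated at $k = b+1, b+2$ shows that the $\mu'$-section either terminates within a bounded number of steps or forces $\nu_{b+2}$ to re-enter $C$, starting a new contiguous piece of $\sectional{C}{\mu}{H'}$. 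Iterating gives a correspondence with bounded size offset between contiguous pieces of the two sections, and the symmetric analysis near $a$ transfers finiteness.

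For the second claim, Claim 1 and symmetry let us assume both sections are finite. The proof of Lemma \ref{lemma:unimodal} is local in the $H'$-line and hence applies to $C$; $f$ and $g$ are unimodal on the contiguous piece containing $k_{0}$, which carries the peak of each section. Parallel transport yields $f(k+1) - f(k) = g(k+1) - g(k)$ on this piece, so the peaks of $f$ and $g$ coincide at the same index; equivalently, $\mu = \nu_{k_{0}}$ is the peak of $\sectional{C}{\mu}{H'}$ if and only if $\mu' = \nu'_{k_{0}}$ is the peak of $\sectional{C}{\mu'}{H'}$. The main obstacle is tracking how a section may decompose into several contiguous pieces when $f$ or $g$ touches $0$, and verifying that the parallel-transport identity re-initializes at the start of each new piece; this is handled by an inductive argument using Lemma \ref{lemma:s} combined with the three-in-$C$ form of parallel transport.
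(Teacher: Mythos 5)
Your proposal is essentially the paper's own argument made explicit: the paper proves this lemma in two lines by citing Lemma \ref{lemma:latA} and Lemma \ref{lemma:unimodal}, and your ``parallel transport'' identity is exactly Lemma \ref{lemma:latA} applied to the unit square with corners $\nu_{k}\wedge\nu'_{k+1}=\nu_k$ and $\nu_{k}\vee\nu'_{k+1}... $, i.e.\ to $(\mu,\mu')=(\nu_{k+1},\nu'_k)$; your verification that three corners always lie in $C$ while one stays inside the contiguous piece through $k_0$ (using $|g(k)-f(k)|=1$ from Lemma \ref{lemma:s}) is correct, as is the endpoint analysis $g(b)\in\{0,2\}$ and the peak comparison. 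The one place you go beyond the paper is the contingency that a section might split into several contiguous pieces joined through paths outside the line; you flag this but the proposed resolution (``a correspondence with bounded size offset between contiguous pieces'') is not actually carried out, and the local square lemma alone cannot rule out, say, an isolated point of $\sectional{C}{\mu'}{H'}$ far to the right that is attached to $C$ by a detour. Since the paper's own proof silently treats sections as intervals, this loose end does not make your argument worse than the original, but if you want it airtight you should either prove contiguity of sections first or note, as in the proof of Lemma \ref{lemma:outsidecone}, that for $k$ large both $\nu_k$ and $\nu'_k$ lie in $\cone{\Lat}{H'}$, which transfers finiteness directly.
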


\begin{proof}

First consider the case when $\numof{\sectional{C}{\mu}{H'}}=1$.
In this case, $\Delta(\mu)=1$ and $\Delta(\mu')=2$.
Define $\mu^{(1)}$ by $ \mu  \coveredby \mu^{(1)}$ with 
$\mu_{H'}+1 =\mu^{(1)}_{H'}$.
Then $\mu^{(1)}\vee\mu' \in\sectional{C}{\mu'}{H'}$.
By the assumption $\Delta(\mu^{(1)})=0$. So Lemma \ref{lemma:latA} implies
$\Delta(\mu^{(1)}\vee\mu')=1$.
Define $\mu^{\prime(-1)}$ by $ \mu^{\prime(-1)}  \coveredby \mu'$ with 
$\mu^{\prime(-1)}_{H'}+1 =\mu'_{H'}$.
Then 
$\mu^{\prime(-1)}\in \sectional{C}{\mu'}{H'}$.
By the assumption $\Delta(\mu\wedge\mu^{\prime(-1)})=0$. So Lemma \ref{lemma:latA} 
implies
$\Delta(\mu^{\prime(-1)})=1$.
Since $\mu^{\prime(-1)} \coveredby\mu' \coveredby\mu^{(1)}\vee\mu' $
and $\Delta(\mu^{(1)}\vee\mu')<\Delta(\mu')>\Delta(\mu^{\prime(-1)})$,
by Lemma \ref{lemma:unimodal}, 
 $\mu'$ is the peak element with respect to
 $\sectional{C}{\mu'}{H'}$, and  $\numof{\sectional{C}{\mu'}{H'}}=3 <\infty$.

Next consider the case when $\numof{\sectional{C}{\mu}{H'}}>1$.
Let $\mu^{(0)}$ be the peak element with respect to $\sectional{C}{\mu}{H'}$,
and 
\begin{align*}
\sectional{C}{\mu}{H'}=\Set{
\mu^{(i)} \mid 
\cdots \coveredby  \mu^{(-1)} \coveredby \mu^{(0)}\coveredby\mu^{(1)}\coveredby \cdots} .
\end{align*}
Then
$\Delta(\mu^{(-i)})=\Delta(\mu^{(i)})$.
Let us define $\mu^{\prime(i)}$ by 
 $\mu^{(i)} \coveredby \mu^{\prime(i)}$ and 
$\mu^{(i)}_H + 1 = \mu^{\prime(i)}_H $.
If $\mu=\mu^{(j)}$, then $\mu'=\mu^{\prime(j)}$.
By direct calculation, we have
 $ \mu^{\prime(i)} \vee \mu^{(i+1)}=\mu^{\prime(i+1)}$
and  $ \mu^{\prime(i)} \wedge \mu^{(i+1)}=\mu^{(i)}$.
For $i<0$,  $\Delta(\mu^{(i+1)})>\Delta(\mu^{(i)})>0$.
Hence $\Delta(\mu^{\prime(i+1)})>0$.
It follows from Lemma \ref{lemma:latA} that
\begin{align*}
\Delta(\mu^{\prime(i+1)})-\Delta(\mu^{\prime(i)})=\Delta(\mu^{(i+1)})-\Delta(\mu^{(i)})=1.
\end{align*}
On the other hand, for $i>0$, the same argument implies that 
\begin{align*}
\Delta(\mu^{\prime(i-1)})-\Delta(\mu^{\prime(i)})=\Delta(\mu^{(i-1)})-\Delta(\mu^{(i)})=1.
\end{align*}
Hence, by Lemma  \ref{lemma:unimodal},
 $\mu^{\prime(k)}$ is the peak element with respect to
 $\sectional{C}{\mu'}{H'}$, and  $\numof{\sectional{C}{\mu'}{H'}} <\infty$. 
The same proof is valid if $\mu$ is replaced by $\mu'$. 
\end{proof}

\begin{lemma}
\label{lemma:infty}
Let $C\in\cc{\Lat}$. If there exists $\mu\in C$ satisfying
$\numof{\sectional{C}{\mu}{H}}<\infty$ for any $H\in\AAA$,
then $C\in\fcc{\Lat}$.
Hence, for $\mu\in C\in\icc{\Lat}$,
there exists $H \in \AAA$  such that
$\numof{\sectional{C}{\mu}{H}}=\infty$.
\end{lemma}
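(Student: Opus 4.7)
The plan is to produce a single element $\mu^{*}\in C$ that is simultaneously the peak of $\Delta$ in every sectional direction, i.e.~one satisfying $\Delta(\mu^{*})\geq\Delta(\nu)$ for every $\nu\in X_{\mu^{*}}:=\bigcup_{H\in\AAA}\sectional{C}{\mu^{*}}{H}$. Once such a $\mu^{*}$ is found, Lemma \ref{lemma:shape} immediately identifies $C$ with the ball $\disk{\mu^{*}}{\Delta(\mu^{*})}$, which is finite, so $C\in\fcc{\Lat}$; the second assertion is the contrapositive of the first.

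The first step is to propagate the hypothesis throughout $C$. I would show that if $\nu\in C$ has $\numof{\sectional{C}{\nu}{H'}}<\infty$ for every $H'\in\AAA$, then the same holds for any $\nu'\in C$ with $\nu\coveredby\nu'$. Indeed, if the covering increases the $H$-coordinate, then $\sectional{C}{\nu'}{H}=\sectional{C}{\nu}{H}$ is unchanged, while for $H'\neq H$ Lemma \ref{lemma:preservemax} gives $\numof{\sectional{C}{\nu'}{H'}}=\numof{\sectional{C}{\nu}{H'}}\pm 2$, still finite. Since $C$ is connected in the Hasse graph, every element of $C$ inherits the finite-sections property from $\mu$, and in particular each such section admits a unique peak element by Lemma \ref{lemma:unimodal}.

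Next, I would construct $\mu^{*}$ iteratively. Fix an enumeration $\AAA=\Set{H_{1},\ldots,H_{n}}$ and set $\mu_{0}:=\mu$. For $i=1,\ldots,n$, let $\mu_{i}\in C$ be the peak of the finite section $\sectional{C}{\mu_{i-1}}{H_{i}}$. I claim inductively that $\mu_{i}$ is simultaneously the peak of $\sectional{C}{\mu_{i}}{H_{j}}$ for every $j\leq i$. The case $j=i$ follows from the definition of $\mu_{i}$ together with the identity $\sectional{C}{\mu_{i}}{H_{i}}=\sectional{C}{\mu_{i-1}}{H_{i}}$. For $j<i$, the elements $\mu_{i-1}$ and $\mu_{i}$ are joined by a sequence of elementary covering moves entirely inside $\sectional{C}{\mu_{i-1}}{H_{i}}$, so each such move modifies only the $H_{i}$-coordinate; the second part of Lemma \ref{lemma:preservemax}, applied step-by-step with the fixed ``other'' hyperplane $H_{j}$, transports the peak property from $\mu_{i-1}$ to $\mu_{i}$. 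After $n$ iterations, $\mu^{*}:=\mu_{n}$ is the peak in every direction, hence $\Delta(\mu^{*})=\max\Set{\Delta(\nu)|\nu\in X_{\mu^{*}}}$, and Lemma \ref{lemma:shape} completes the argument.

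The delicate point I anticipate as the main obstacle is the inductive claim in the third paragraph: one must ensure that \emph{every} elementary covering step used to travel from $\mu_{i-1}$ to $\mu_{i}$—including the steps which decrease the $H_{i}$-coordinate, which require the iff form of Lemma \ref{lemma:preservemax}—simultaneously preserves the peak property along each of the earlier directions $H_{1},\ldots,H_{i-1}$. Everything else is essentially bookkeeping built on top of Lemmas \ref{lemma:unimodal}, \ref{lemma:preservemax}, and \ref{lemma:shape}.
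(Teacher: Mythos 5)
Your proposal is correct and follows essentially the same route as the paper: iteratively move to the peak element of one section at a time, use Lemma \ref{lemma:preservemax} to show that both finiteness of sections and the peak property in previously handled directions are preserved along the way, and conclude with Lemma \ref{lemma:shape}. The paper phrases the iteration via the set $\BBB_{\nu}$ of directions already ``peaked'' rather than a fixed enumeration of $\AAA$, but this is only a cosmetic difference.
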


\begin{proof}
For $H \in \AAA$, $C\in\cc{\Lat}$ and $\mu\in C$, 
define $m_{\mu, H}$ and $\BBB_{\mu}$ by
$m_{\mu, H} :=\max\Set{\Delta(\mu')| \mu' \in \sectional{C}{\mu}{H}}$
and $\BBB_{\mu}:=\Set{H \in \AAA|\Delta(\mu)=m_{\mu,H}}$. 
Assume that 
$\numof{\sectional{C}{\mu}{H}}<\infty$ for all $H\in\AAA$.
Let us construct $\nu$ as follows:
\begin{enumerate}
\item Let $\nu$ be $\mu$.
\item Repeat the following until $\AAA = \BBB_{\nu}$ :
\begin{enumerate}
\item Choose $H_0 \in \AAA \setminus \BBB_{\nu}$ and the peak
 element 
$\nu'$ with respect to $\sectional{C}{\nu}{H_0}$. 
\item Let $\nu$ be $\nu'$.
\end{enumerate}
\end{enumerate}
By the assumption and Lemma \ref{lemma:preservemax}, 
$\numof{\sectional{C}{\nu'}{H}}<\infty$ for all $H\in\AAA$ and 
$\Delta(\nu') = m_{\nu',H}$ for all $H \in \BBB_{\nu}$.
Hence, by Lemma \ref{lemma:preservemax},
$\BBB_{\nu'}=\BBB_{\nu} \cup \Set{H_0}$.
Since $\numof{\AAA} < \infty$, we can always find $\nu \in C$ such that
$\Delta(\nu) = m_{\nu,H}$ for all $H\in\AAA$.
Hence Lemma \ref{lemma:shape} implies that 
$C \in \fcc{\Lat}$. 
\end{proof}

\begin{lemma}\label{lemma:outsidecone}
 $\icc{\Lat}=\Set{ \cone{\LAT}{H} | H\in\AAA }$.
\end{lemma}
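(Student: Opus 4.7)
The plan is to show both inclusions: each $\cone{\LAT}{H}$ is a maximal infinite connected component of $\Lat$, and every infinite component arises this way. First I would pin down $\theta_\mu$ on the cone. Choosing coordinates with $\alpha_H=x$, set
\[
\eta_\mu:=\prod_{H'\in\AAA\setminus\Set{H}}\alpha_{H'}^{\mu_{H'}}\der_y;
\]
then $\eta_\mu\in D(\AAA,\mu)$ has degree $|\mu|-\mu_H<|\mu|/2$, because $\der_y(\alpha_H)=0$ and the product delivers the required divisibility at the other lines. A short case split on whether a putative lower-degree element $\theta'\in D(\AAA,\mu)$ kills $\alpha_H$ (then it is a multiple of $\der_y$ whose coefficient must be divisible by $\prod_{H'\ne H}\alpha_{H'}^{\mu_{H'}}$) or not (then $\deg\theta'\ge\mu_H-1$, forcing $2\mu_H\le|\mu|$, contradicting $\mu\in\cone{\LAT}{H}$) shows no such $\theta'$ exists. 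Saito's criterion then gives $\mu\in\Lat$ with $\Delta(\mu)=2\mu_H-|\mu|$ and $\theta_\mu\propto\eta_\mu$. Moreover $\cone{\LAT}{H}$ is connected in the Hasse graph: any two of its points can be joined by first raising the $H$-coordinate well above the sum of the others and then adjusting the remaining coordinates one step at a time.

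Next I would establish maximality. A boundary element $\kappa\in\cone{\LAT}{H}$ has $2\kappa_H=|\kappa|+1$ and $\Delta(\kappa)=1$; an adjacent $\lambda\notin\cone{\LAT}{H}$ is either $\kappa-e_H$ or $\kappa+e_{H'}$ for some $H'\ne H$, lies in $\innercone{\LAT}$, and by Lemma~\ref{lemma:s} satisfies $\Delta(\lambda)\in\Set{0,2}$. Assume $\Delta(\lambda)=2$, so $\lambda\in\Lat$. If $\lambda=\kappa-e_H$, Lemma~\ref{lemma:bas} forces $\theta_\kappa\propto\alpha_H\theta_\lambda$, whereas $\theta_\kappa\propto\eta_\kappa$ is not divisible by $\alpha_H$, a contradiction. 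If $\lambda=\kappa+e_{H'}$, Lemma~\ref{lemma:bas} gives $\theta_\lambda\propto\theta_\kappa$, so $\eta_\kappa\in D(\AAA,\lambda)$; but $\eta_\kappa(\alpha_{H'})$ is divisible by $\alpha_{H'}^{\kappa_{H'}}$ and no higher power, while membership in $D(\AAA,\lambda)$ demands $\alpha_{H'}^{\kappa_{H'}+1}$, again a contradiction. Hence $\Delta(\lambda)=0$, i.e., $\lambda\notin\Lat$, and $\cone{\LAT}{H}$ is a maximal connected component of $\Lat$.

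For the converse, take $C\in\icc{\Lat}$ and $\mu\in C$. By Lemma~\ref{lemma:infty} there exists $H\in\AAA$ with $\numof{\sectional{C}{\mu}{H}}=\infty$. Since $\Delta>0$ on $C$ and $\Delta|_{\sectional{C}{\mu}{H}}$ is unimodal by Lemma~\ref{lemma:unimodal}, the only way the section can be infinite is for $\Delta$ to be strictly increasing in $\nu_H$ past some threshold. Lemma~\ref{lemma:bas} then yields $\theta_\nu=\theta_{\nu-e_H}$ throughout this tail, so a single derivation $\tau$ lies in $D(\AAA,\nu)$ for arbitrarily large $\nu_H$; the constraint $\tau(\alpha_H)\in\ideal{\alpha_H^{\nu_H}}$ for arbitrarily large $\nu_H$ forces $\tau(\alpha_H)=0$, and the divisibility conditions at the other lines then pin $\tau\propto\eta_\mu$. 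Hence $\deg\theta_\nu=|\nu|-\nu_H<|\nu|/2$, i.e., $\nu\in\cone{\LAT}{H}$, so $C\cap\cone{\LAT}{H}\ne\emptyset$; combined with the maximality above, this gives $C=\cone{\LAT}{H}$.

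The main obstacle is the boundary analysis in the second paragraph: one must treat both kinds of exit from $\cone{\LAT}{H}$ and convert the case $\Delta(\lambda)=2$, via Lemma~\ref{lemma:bas}, into a concrete divisibility relation that the explicit shape of $\theta_\kappa=\eta_\kappa$ contradicts.
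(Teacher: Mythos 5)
Your proof is correct and its overall skeleton matches the paper's: use Lemma \ref{lemma:infty} to produce, for each $C\in\icc{\Lat}$ and $\mu\in C$, a hyperplane $H$ with $\numof{\sectional{C}{\mu}{H}}=\infty$, locate an element of $C$ inside $\cone{\LAT}{H}$, and conclude $C=\cone{\LAT}{H}$ from the fact that $\cone{\LAT}{H}$ is itself a maximal connected component of $\Lat$. The genuine difference is that the paper simply asserts that last fact (``On the other hand, $\cone{\Lat}{H}\in\icc{\Lat}$'') with no argument, whereas you actually prove it: you exhibit the explicit lowest-degree derivation $\eta_\mu=\prod_{H'\neq H}\alpha_{H'}^{\mu_{H'}}\der_y$, deduce $\Delta(\mu)=2\mu_H-\numof{\mu}>0$ on the cone (so in particular $\cone{\LAT}{H}\subset\Lat$), verify connectedness, and run a boundary analysis showing that every Hasse-neighbor of the cone lying outside it has $\Delta=0$. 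That is real added content: the paper's reader must supply exactly this. Conversely, for the step ``$C$ meets the cone,'' the paper's argument is the more economical one: for $\nu\in\sectional{C}{\mu}{H}$ the quantity $\numof{\nu}-\nu_H=\numof{\mu}-\mu_H$ is fixed, so as soon as $\nu_H>\numof{\mu}-\mu_H$ one has $\nu\in\cone{\LAT}{H}$ by pure counting, with no need for your tail analysis via Lemma \ref{lemma:bas} and the vanishing of $\tau(\alpha_H)$ (which, as written, also invokes unimodality of $\Delta$ along a section of an \emph{infinite} component, whereas Lemma \ref{lemma:unimodal} is only stated for $C\in\fcc{\Lat}$ --- harmless, since its proof never uses finiteness of $C$, but worth flagging). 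Net effect: same route, with your version supplying the half of the argument the paper omits and slightly over-engineering the half it includes.
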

\begin{proof}
Lemma \ref{lemma:infty} implies that, 
for $\mu \in C \in \icc{\Lat}$, 
there exists $H$  such that
$\numof{\sectional{C}{\mu}{H}}=\infty$.
Hence if $\nu\in \LAT$ satisfies
\begin{gather*}
\nu_{H'}=
\begin{cases}
\mu_{H} + \numof{\mu} & (H=H'),\\
\mu_{H'} & (H\neq H'),
\end{cases}
\end{gather*}
then $\nu \in \sectional{C}{\mu}{H}$.
By definition, $\nu \in \cone{\LAT}{H}$. 
Since $\mu$ and $\nu$ belong to the same component $C$,
$\mu$ is also in $\cone{\LAT}{H}$.
On the other hand, $\cone{\LAT}{H} \in \icc{\Lat}$.
Since $\cone{\LAT}{H}$ is connected, $C=\cone{\LAT}{H}$. 
\end{proof}

\begin{proof}[Proof of Theorem \ref{theorem:misc}] 
Apply Lemma \ref{lemma:s} and \ref{lemma:outsidecone}.
\end{proof}

\subsection{Proof of Theorem \ref{theorem:independency}}
In this subsection we prove Theorem \ref{theorem:independency}. Roughly speaking, 
the proof is based on the observation of $\theta_{\mu}$ for $\mu$ in some finite balls in Theorem 
\ref{theorem:shape}.

\begin{lemma}\label{lemma:depincI}
Let $C\in \fcc{\Lat}$, $\kappa\in C$
and $\mu \in \mpts{C}$.
Then we can construct $\theta_\mu$ from $\theta_\kappa$, and vice versa.
\end{lemma}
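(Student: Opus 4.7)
The plan is to transport generators along a path inside $C$, applying Lemma \ref{lemma:bas} at each covering edge. Since $C$ is a maximal connected component of the induced subgraph $\Lat$ of the Hasse graph of $\LAT$, there is a path $\kappa = \rho^{(0)}, \rho^{(1)}, \ldots, \rho^{(\ell)} = \mu$ inside $C$ such that for each $i$ either $\rho^{(i)} \coveredby \rho^{(i+1)}$ or $\rho^{(i+1)} \coveredby \rho^{(i)}$. Every vertex on the path lies in $C \subset \Lat$, so $\theta_{\rho^{(i)}}$ is well defined up to nonzero scalar for each $i$.

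At each covering edge Lemma \ref{lemma:bas} yields a concrete identity relating the two generators: if $\rho^{(i)} \coveredby \rho^{(i+1)}$ with $\rho^{(i+1)}_H = \rho^{(i)}_H + 1$, then $\theta_{\rho^{(i+1)}} = \theta_{\rho^{(i)}}$ when $\Delta(\rho^{(i)}) < \Delta(\rho^{(i+1)})$, and $\theta_{\rho^{(i+1)}} = \alpha_H \theta_{\rho^{(i)}}$ when $\Delta(\rho^{(i)}) > \Delta(\rho^{(i+1)})$. Either identity is reversible in $\Der{\kk}{S}$: in the multiplicative case, Lemma \ref{lemma:bas} itself guarantees the existence of a derivation whose product with $\alpha_H$ equals $\theta_{\rho^{(i+1)}}$, and such a derivation is unique, hence agrees with $\theta_{\rho^{(i)}}$ up to scalar.

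Concatenating these identities along the path, I build $\theta_{\rho^{(1)}}, \theta_{\rho^{(2)}}, \ldots, \theta_{\rho^{(\ell)}} = \theta_\mu$ starting from $\theta_\kappa$; running the procedure backwards recovers $\theta_\kappa$ from $\theta_\mu$. The only point to check is that the path can be kept inside $\Lat$ so that Lemma \ref{lemma:bas} applies at every step, and this is automatic from the definition of $C$ as a connected component of $\Lat$. Hence no real obstacle arises, and I do not even need to use the stronger hypothesis $\mu\in\mpts{C}$; the argument works for any two elements of $C$ and amounts to path-concatenation of the relations from Lemma \ref{lemma:bas}.
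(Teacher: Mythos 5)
Your proof is correct and follows essentially the same route as the paper's: transport $\theta$ along a path inside $C$ by iterating Lemma \ref{lemma:bas} edge by edge, using that each relation $\theta_\nu=\theta_{\mu}$ or $\theta_\nu=\alpha_H\theta_{\mu}$ is reversible. The only difference is that the paper routes the path through $\mu\wedge\kappa$ (which lies in $C$ by Theorem \ref{theorem:shape}) so as to apply the monotone-chain formula of Corollary \ref{lemma:pathbasis}, whereas you take an arbitrary zig-zag path supplied by connectedness; as you observe, this renders the hypothesis $\mu\in\mpts{C}$ superfluous, which is consistent with the paper's immediate deduction of Lemma \ref{lemma:depinc} for arbitrary pairs in $C$.
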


\begin{proof}
By Theorem \ref{theorem:shape},  $\mu \wedge \kappa \in C$.
It follows from  Lemma \ref{lemma:pathbasis} that
\begin{align*}
\theta_\mu &=\downalpha{\rho} \theta_{\mu \wedge \kappa},\\
\theta_\kappa &=\downalpha{\rho'} \theta_{\mu \wedge \kappa}
\end{align*}
for some saturated chains  $\rho$ and $\rho'$.
Hence we have
\begin{align*}
\theta_\mu &= \frac{\downalpha{\rho}}{\downalpha{\rho'}}\theta_\kappa,\\
\theta_\kappa &= \frac{\downalpha{\rho'}}{\downalpha{\rho}}\theta_\mu.
\end{align*}
\end{proof}

\begin{lemma}\label{lemma:depinc}
Let $C\in\fcc{\Lat}$ and $\mu,\nu\in C$.
Then $\Set{\theta_\mu,\theta_\nu}$ is $S$-linearly dependent.
\end{lemma}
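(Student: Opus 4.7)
The plan is to use the unique peak element of $C$ together with Lemma \ref{lemma:depincI} to realise both $\theta_\mu$ and $\theta_\nu$ as $S$-rational multiples of the same derivation $\theta_\kappa$, and then cross-multiply to produce an honest $S$-linear dependence.

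First I would invoke Theorem \ref{theorem:shape}: it guarantees a unique peak element $\kappa \in \mpts{C}$ and identifies $C$ with the ball $\disk{\kappa}{\Delta(\kappa)}$. A componentwise check shows that $\distance{\kappa}{\mu\wedge\kappa} \leq \distance{\kappa}{\mu}$, and the same bound holds at every intermediate vertex of a saturated chain from $\mu\wedge\kappa$ to $\mu$ or to $\kappa$; hence all such chains stay inside $C$, and Corollary \ref{lemma:pathbasis} applies within $C$. Choosing saturated chains $\rho_\mu,\rho_\mu'$ from $\mu\wedge\kappa$ to $\mu$ and to $\kappa$ respectively, and analogously $\rho_\nu,\rho_\nu'$ for $\nu$, iteration of Lemma \ref{lemma:bas} yields the honest equalities
\[
\downalpha{\rho_\mu'}\,\theta_\mu \;=\; \downalpha{\rho_\mu}\,\theta_\kappa,
\qquad
\downalpha{\rho_\nu'}\,\theta_\nu \;=\; \downalpha{\rho_\nu}\,\theta_\kappa
\]
in $\Der{\kk}{S}$.

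Eliminating $\theta_\kappa$ between the two relations gives
\[
\downalpha{\rho_\nu}\,\downalpha{\rho_\mu'}\,\theta_\mu \;-\; \downalpha{\rho_\mu}\,\downalpha{\rho_\nu'}\,\theta_\nu \;=\; 0,
\]
and since each $\downalpha{\rho}$ is a nonzero monomial in $\Set{\alpha_H | H\in\AAA}\subset S$, this is a nontrivial $S$-linear dependence between $\theta_\mu$ and $\theta_\nu$. The only conceptual point requiring care is passing from the formal quotient $\theta_\mu=(\downalpha{\rho_\mu}/\downalpha{\rho_\mu'})\,\theta_\kappa$ appearing in Lemma \ref{lemma:depincI} to a genuine identity in $\Der{\kk}{S}$; this is harmless, as the quotient is just a repackaging of the honest equation $\downalpha{\rho_\mu'}\,\theta_\mu=\downalpha{\rho_\mu}\,\theta_\kappa$. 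I do not anticipate a hard obstacle: once Theorem \ref{theorem:shape} forces $C$ to be a ball around a unique peak, every meet needed lies in $C$ and the cross-multiplication argument goes through without further work.
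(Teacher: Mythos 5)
Your proof is correct and takes essentially the same route as the paper, which simply derives the lemma from Lemma \ref{lemma:depincI} (both $\theta_\mu$ and $\theta_\nu$ are $S$-rational multiples of $\theta_\kappa$ for the peak $\kappa$, via meets and Corollary \ref{lemma:pathbasis}) and leaves the cross-multiplication implicit. Your explicit check that the saturated chains through $\mu\wedge\kappa$ stay inside $C$ is a detail the paper glosses over, but it is the same argument.
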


\begin{proof}
The lemma  follows from Lemma \ref{lemma:depincI}.
\end{proof}

\begin{lemma}\label{lemma:independency}
Let $\mu,\nu \in \Lat$ satisfy $\distance{\mu}{\nu}=2$.
If $\Delta(\kappa)=0$ for all $\kappa\in \LAT$ such that
 $\distance{\mu}{\kappa}=\distance{\nu}{\kappa}=1$, then
$\Set{\theta_\mu, \theta_\nu}$ is $S$-linearly independent.
\end{lemma}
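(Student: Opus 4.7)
The plan is to work inside a suitable balanced free module $D(\AAA,\kappa)$ attached to a middle multiplicity $\kappa$ with $\Delta(\kappa)=0$, produce an explicit $S$-basis of $D(\AAA,\kappa)$ built from $\theta_\mu$ and $\theta_\nu$ (after multiplying by appropriate linear forms), and transfer $S$-linear independence of that basis back to $\theta_\mu$ and $\theta_\nu$. The hypothesis $\distance{\mu}{\nu}=2$ naturally splits into two cases: (Case~1) $\mu$ and $\nu$ differ at two hyperplanes $H_1,H_2$ by $\pm 1$ each, so the two multiplicities at distance $1$ from both $\mu$ and $\nu$ are $\mu\wedge\nu$ and $\mu\vee\nu$; (Case~2) $\mu\subset\nu$ differ at a single hyperplane $H$ by $2$, so the unique such middle multiplicity is the $\kappa$ with $\kappa_H=\mu_H+1$ and $\kappa_{H'}=\mu_{H'}$ for $H'\neq H$. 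In either case the hypothesis forces $\Delta=0$ on these middle elements, and Lemma~\ref{lemma:s} then gives $\Delta(\mu)=\Delta(\nu)=1$.

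In Case~1, set $\kappa_2:=\mu\vee\nu$, $\kappa_1:=\mu\wedge\nu$, and $d:=\deg\theta_\mu=\deg\theta_\nu=(|\mu|-1)/2$. Since $\Delta(\kappa_2)=0$, the free module $D(\AAA,\kappa_2)$ has homogeneous basis of degrees $(d+1,d+1)$, and both $\alpha_{H_1}\theta_\mu$ and $\alpha_{H_2}\theta_\nu$ sit in $D(\AAA,\kappa_2)$ at degree $d+1$. I claim they are $\kk$-linearly independent: an equality $\alpha_{H_1}\theta_\mu=c\alpha_{H_2}\theta_\nu$ with $c\in\kk^{\times}$, together with coprimality of $\alpha_{H_1}$ and $\alpha_{H_2}$, forces a factorization $\theta_\nu=\alpha_{H_1}\eta$ with $\eta\in\Der{\kk}{S}$ of degree $d-1$; a direct $H$-by-$H$ analysis of the containments $\theta_\nu(\alpha_H)\in\ideal{\alpha_H^{\nu_H}}$ then places $\eta$ in $D(\AAA,\kappa_1)$. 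Since $\Delta(\kappa_1)=0$ makes $|\kappa_1|/2=d$ the minimum generator degree of $D(\AAA,\kappa_1)$, this forces $\eta=0$ and so $\theta_\nu=0$, a contradiction. Saito's criterion (Theorem~\ref{Saito}) then makes $\set{\alpha_{H_1}\theta_\mu,\alpha_{H_2}\theta_\nu}$ an $S$-basis of $D(\AAA,\kappa_2)$, and any hypothetical syzygy $f\theta_\mu+g\theta_\nu=0$ in $\Der{\kk}{S}$ yields, after multiplication by $\alpha_{H_1}\alpha_{H_2}$, the $S$-relation $(f\alpha_{H_2})(\alpha_{H_1}\theta_\mu)+(g\alpha_{H_1})(\alpha_{H_2}\theta_\nu)=0$, which forces $f=g=0$.

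Case~2 is a simpler variant. With $d:=\deg\theta_\mu$ we get $\deg\theta_\nu=d+1=|\kappa|/2$, and $\alpha_H\theta_\mu,\theta_\nu\in D(\AAA,\kappa)$ both lie in degree $d+1$; an equation $\theta_\nu=c\alpha_H\theta_\mu$ would push $\theta_\mu$ into $D(\AAA,\kappa)$ in degree $d<|\kappa|/2$, contradicting $\Delta(\kappa)=0$. Saito's criterion makes $\set{\alpha_H\theta_\mu,\theta_\nu}$ an $S$-basis of $D(\AAA,\kappa)$, and a syzygy of $\theta_\mu,\theta_\nu$ becomes, after multiplication by $\alpha_H$, an $S$-syzygy of this basis and hence trivial. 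The main delicate step is the Case~1 check that the factor $\eta$ in $\theta_\nu=\alpha_{H_1}\eta$ truly lies in $D(\AAA,\kappa_1)$: one peels $\alpha_{H_1}$ off the containment at $H_1$ to get $\eta(\alpha_{H_1})\in\ideal{\alpha_{H_1}^{\nu_{H_1}-1}}=\ideal{\alpha_{H_1}^{(\mu\wedge\nu)_{H_1}}}$, and for $H\neq H_1$ one transfers $\theta_\nu(\alpha_H)\in\ideal{\alpha_H^{\nu_H}}$ to $\eta$ using coprimality of $\alpha_{H_1}$ with $\alpha_H$.
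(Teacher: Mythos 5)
Your overall strategy---passing to the ``middle'' multiplicities where $\Delta=0$ and using the degree structure of those balanced free modules to rule out proportionality---is the same as the paper's, and the two cases you treat are handled correctly in substance. The problem is that your case division is incomplete: $\distance{\mu}{\nu}=2$ splits into \emph{three} configurations, not two. Besides (i) a difference of $2$ at a single hyperplane and (ii) differences of $1$ at two hyperplanes with opposite signs (the incomparable case), there is (iii) $\nu_{H_1}=\mu_{H_1}+1$, $\nu_{H_2}=\mu_{H_2}+1$ with $H_1\neq H_2$ and $\nu=\mu$ elsewhere. Your Case~1 silently assumes configuration (ii): only there are the two elements at distance $1$ from both $\mu$ and $\nu$ equal to $\mu\wedge\nu$ and $\mu\vee\nu$, only there is $\numof{\mu}=\numof{\nu}$ so that $\deg\theta_\mu=\deg\theta_\nu$, and only there does the hypothesis give $\Delta(\mu\vee\nu)=0$. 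In configuration (iii) the two middle elements are $\kappa$ (equal to $\mu$ except $\kappa_{H_1}=\mu_{H_1}+1$) and $\kappa'$ (equal to $\mu$ except $\kappa'_{H_2}=\mu_{H_2}+1$), while $\mu\wedge\nu=\mu$ and $\mu\vee\nu=\nu$ have $\Delta=1$; so your Case~1 machinery does not apply, and Case~2 does not cover it either. This configuration genuinely occurs and is the paper's second case: there one takes a basis $\Set{\alpha_{H_1}\theta_\mu,\theta'}$ of $D(\AAA,\kappa)$, writes $\theta_\nu$ in it, and if $\theta_\nu=a\,\alpha_{H_1}\theta_\mu$ one uses coprimality of $\alpha_{H_1}$ and $\alpha_{H_2}$ on the containment $\theta_\nu(\alpha_{H_2})\in\ideal{\alpha_{H_2}^{\nu_{H_2}}}$ to force $\theta_\mu\in D(\AAA,\kappa')$, contradicting $\Delta(\kappa')=0$ together with $\deg\theta_\mu<\numof{\kappa'}/2$. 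You need to add this case.

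A second, minor point: when you invoke Saito's criterion you need $S$-linear independence of $\alpha_{H_1}\theta_\mu$ and $\alpha_{H_2}\theta_\nu$, but you only verify $\kk$-linear independence, which in general does not imply $S$-linear independence. This is repairable without Saito: both elements lie in the degree-$(d+1)$ graded piece of $D(\AAA,\kappa_2)$, which is two-dimensional because the exponents are $(d+1,d+1)$, so $\kk$-linear independence already makes them a generating set of the rank-two free module and hence a basis. With that reading your Cases~1 and~2 are sound (your factorization argument $\theta_\nu=\alpha_{H_1}\eta$ with $\eta\in D(\AAA,\mu\wedge\nu)$ in too low a degree is a nice, slightly different route from the paper's); the only real defect is the missing third configuration.
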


\begin{proof}
First assume that $\mu_H+2 = \nu_H$ for some $H \in \AAA$ and
 $\mu_{H''}=\nu_{H''}$ for $H'' \in \AAA\setminus\Set{H}$.
Let $\kappa \in \LAT$ be the element such that 
$\mu \coveredby \kappa \coveredby \nu$. 
Since $\Delta(\kappa)=0$, 
$\theta_\mu \not\in D(\AAA,\kappa)$. 
Hence $\alpha_H \theta_\mu \in D(\AAA,\kappa)$ and is a 
part of basis. 
Let $\Set{\alpha_H \theta_\mu, \theta' }$ be a basis for 
the $S$-module $D(\AAA,\kappa)$.
Since $D(\AAA,\nu) \subset D(\AAA,\kappa)$,
$\theta_\nu= a \alpha_H \theta_\mu + b \theta' $ for some $a,b \in \kk$.
If $\Set{\theta_\mu, \theta_\nu }$ is $S$-linearly dependent, then
$b=0$, i.e., $\theta_\nu = a \alpha_H \theta_\mu$.
Since $\alpha_H \theta_\mu \in D(\AAA,\nu)$,
$\alpha_H \theta_\mu (\alpha_H) \in \ideal{\alpha_H^{\nu_H}}=\ideal{\alpha_H^{\kappa_H+1}}$.
Hence $\theta_\mu (\alpha_H) \in \ideal{\alpha_H^{\kappa_H}}$ and 
$\theta_\mu \in D(\AAA,\kappa)$, 
which is a contradiction.

Next assume that 
$\mu_H +1=\nu_H$ and $\mu_{H'}+1= \nu_{H'}$ for some 
$H,H' \in \AAA$ 
and  $\mu_{H''}=\nu_{H''}$ for $H'' \in \AAA \setminus \Set{H, H'}$.
Let $\kappa \in \LAT$ be the element such that $\kappa_H=\mu_H+1$ and
 $\kappa_{H''}=\nu_{H''}$ for $H'' \in \AAA \setminus \Set{H}$, and 
$\kappa' \in \LAT$ such that $\kappa_{H'}=\mu_{H'}+1$ and
 $\kappa_{H''}=\nu_{H''}$ for $H'' \in \AAA\setminus\Set{H'}$.
By the assumption, $\Delta(\kappa)=\Delta(\kappa')=0$.
Hence $\theta_\mu \not\in D(\AAA,\kappa)$ and $\theta_\mu \not\in D(\AAA,\kappa')$.
Let $\Set{\alpha_H \theta_\mu, \theta' }$ be a basis for 
the $S$-module $D(\AAA,\kappa)$.
Since $\theta_{\nu} \in D(\AAA,\nu) \subset D(\AAA,\kappa)$,
$\theta_{\nu} = a \alpha_H \theta_\mu + b \theta'$
for some $a,b \in \kk$.
If $\Set{\theta_{\mu}, \theta_{\nu} }$ is $S$-linearly dependent, then
$\theta_{\nu} = a \alpha_H \theta_\mu$.
Since $\theta_{\nu}(\alpha_{H'}) = a \alpha_H \theta_\mu(\alpha_{H'}) \in \ideal{ \alpha_{H'}^{\nu_{H'}}} = \ideal{ \alpha_{H'}^{\kappa'_{H'}}}$,
$\theta_\mu(\alpha_{H'})\in\ideal{ \alpha_{H'}^{\kappa'_{H'}}}$.
Hence $\theta_\mu \in D(\AAA,\kappa')$, which is contradiction.

Finally assume that 
$\mu_H +1=\nu_H$, $\mu_{H'}= \nu_{H'}+1$ 
for some $H,H' \in \AAA$ and 
$\mu_{H''}=\nu_{H''}$ for $H'' \in \AAA\setminus\Set{H, H'}$.
Let $\kappa = \mu \wedge \nu$ and $\kappa' = \mu \vee \nu$.
By the assumption, $\Delta(\kappa')=\Delta(\kappa)=0$.
Hence $\theta_\mu, \theta_\nu \not\in D(\AAA,\kappa')$. 
We may choose a basis $\Set{\theta_\mu, \theta'}$ for 
$D(\AAA,\kappa)$ such that 
$\Set{ \theta_\mu, \alpha_{H'} \theta'}$ 
is a basis for $D(\AAA,\mu)$.
Since $D(\AAA,\nu)\subset D(\AAA,\kappa)$, $\theta_\nu= a\theta_\mu +b \theta' $
for some $a,b \in \kk$.
If $\Set{\theta_{\mu}, \theta_{\nu} }$ is $S$-linearly dependent, then
$\theta_{\nu} = a \theta_\mu$.
Since $\theta_{\nu} = a \theta_\mu \in D(\AAA,\mu)\cap D(\AAA,\nu)$,
 $\theta_{\nu} \in D(\AAA,\kappa')$ which 
is a contradiction.
\end{proof}

\begin{lemma}\label{lemma:indep}
If $\mu,\nu \in \mpts{\Lat}$ satisfy 
$\distance{\mu}{\nu}=\Delta(\mu)+\Delta(\nu)$,
then $\Set{\theta_\mu, \theta_\nu}$ is $S$-linearly independent.
\end{lemma}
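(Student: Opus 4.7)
The plan is to reduce the claim to Lemma~\ref{lemma:independency} by locating, along a shortest path from $\mu$ to $\nu$, two elements of $\Lat$ at distance two whose common neighbors all have $\Delta = 0$. First note that $\mu \neq \nu$ (otherwise $0 = \distance{\mu}{\nu} = 2\Delta(\mu)$ would contradict $\mu \in \Lat$), and that $\mu$ and $\nu$ lie in distinct components $C, C' \in \fcc{\Lat}$: otherwise Theorem~\ref{theorem:shape} forces $\mpts{C}$ to be a singleton, giving $\mu = \nu$. Set $L = \distance{\mu}{\nu} = \Delta(\mu) + \Delta(\nu)$ and fix a shortest path $\rho^{(0)} = \mu, \rho^{(1)}, \ldots, \rho^{(L)} = \nu$ in the Hasse graph, so that $\distance{\mu}{\rho^{(i)}} = i$ and $\distance{\nu}{\rho^{(i)}} = L - i$.

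I would then set $\mu' := \rho^{(\Delta(\mu)-1)}$ and $\nu' := \rho^{(\Delta(\mu)+1)}$. By Theorem~\ref{theorem:shape}, $C = \disk{\mu}{\Delta(\mu)}$ and $\Delta(\kappa) = \Delta(\mu) - \distance{\mu}{\kappa}$ for each $\kappa \in C$, so $\mu' \in C$ with $\Delta(\mu') = 1$; symmetrically $\nu' \in C'$ with $\Delta(\nu') = 1$. Hence $\mu', \nu' \in \Lat$ and $\distance{\mu'}{\nu'} = 2$. To apply Lemma~\ref{lemma:independency}, I must check that every $\kappa \in \LAT$ with $\distance{\mu'}{\kappa} = \distance{\nu'}{\kappa} = 1$ satisfies $\Delta(\kappa) = 0$. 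Suppose not; then $\kappa \in \Lat$ is adjacent in the induced subgraph $\Lat$ to both $\mu' \in C$ and $\nu' \in C'$, forcing $C = C'$, a contradiction. Lemma~\ref{lemma:independency} then yields the $S$-linear independence of $\Set{\theta_{\mu'}, \theta_{\nu'}}$.

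Finally, since $\mu, \mu' \in C$ and $\nu, \nu' \in C'$, the construction in the proof of Lemma~\ref{lemma:depincI} produces nonzero polynomials $f, g, p, q \in S$ with $f\theta_\mu = g\theta_{\mu'}$ and $p\theta_\nu = q\theta_{\nu'}$. Any $S$-relation $a\theta_\mu + b\theta_\nu = 0$ then gives, after multiplying by $fp$ and substituting, $apg\,\theta_{\mu'} + bfq\,\theta_{\nu'} = 0$; since $S$ is a domain and $\Set{\theta_{\mu'}, \theta_{\nu'}}$ is $S$-linearly independent, this forces $a = b = 0$. The only step requiring any thought is the common-neighbor verification in the middle paragraph, and even that reduces immediately to the disjointness of distinct maximal connected components of $\Lat$; the geometric content of Theorem~\ref{theorem:shape} (that each finite component is a metric ball with $\Delta$ decreasing linearly from the center) is doing all of the heavy lifting.
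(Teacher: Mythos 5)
Your proposal is correct and follows essentially the same route as the paper: locate $\mu',\nu'$ at distance $2$ between the two balls, apply Lemma~\ref{lemma:independency} to them, and transfer the independence back to $\theta_\mu,\theta_\nu$ via Lemma~\ref{lemma:depincI}. The paper merely asserts the existence of such $\mu',\nu'$; your explicit construction on a geodesic, using Theorem~\ref{theorem:shape} to get $\Delta(\mu')=\Delta(\nu')=1$ and the disjointness of components for the common-neighbor check, correctly fills in that omitted step.
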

\begin{proof}
By the assumption, there exist some $\mu',\nu' \in \Lat$ such that 
\begin{itemize}
 \item $\distance{\mu'}{\nu'}=2$, 
 \item $\Delta(\kappa)=0$ for all $\kappa\in \LAT$ such that
 $\distance{\mu'}{\kappa}=\distance{\nu'}{\kappa}=1$,
 \item $\mu,\mu' \in C \in \fcc{\Lat}$, and 
 \item $\nu,\nu' \in C' \in \fcc{\Lat}$.
\end{itemize}
By Lemma \ref{lemma:independency} 
$\Set{\theta_{\mu'}, \theta_{\nu'}}$ is $S$-linearly independent.
Hence Lemma \ref{lemma:depincI} completes proof.
\end{proof}

\begin{proof}[Proof of Theorem \ref{theorem:independency}]
Apply Lemma \ref{lemma:depinc} and \ref{lemma:indep}.
\end{proof}

\subsection{Proof of Theorem \ref{theorem:basis}}

\begin{lemma}
\label{Lemma:lemma4.19}
Assume that $\mu, \nu\in {\Lat}$ satisfy 
$\Delta(\mu) + \Delta(\nu) = \distance{\mu}{\nu}$, and that 
$\Set{\theta_\mu, \theta_\nu}$ is $S$-linearly independent.
Then $\Set{\theta_\mu,\theta_\nu}$ is a basis for $D(\AAA,\mu\wedge\nu)$.
\end{lemma}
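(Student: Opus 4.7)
The plan is to apply Saito's criterion (Theorem \ref{Saito}) to the pair $\{\theta_\mu, \theta_\nu\}$ viewed inside $D(\AAA,\mu\wedge\nu)$. Saito's criterion requires three things: membership of both derivations in $D(\AAA,\mu\wedge\nu)$, $S$-linear independence, and that their degrees sum to $|\mu\wedge\nu|$. Independence is given by hypothesis, so I only need to check the other two.

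Membership is immediate. Since $\mu\wedge\nu\subset\mu$, the defining containment $\delta(\alpha_H)\in\ideal{\alpha_H^{\mu(H)}}\subset\ideal{\alpha_H^{(\mu\wedge\nu)(H)}}$ gives $D(\AAA,\mu)\subset D(\AAA,\mu\wedge\nu)$, so $\theta_\mu\in D(\AAA,\mu\wedge\nu)$; symmetrically for $\theta_\nu$.

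The degree count is the heart of the argument. Since $\theta_\mu$ is the lower-degree member of a basis for $D(\AAA,\mu)$ and Saito's criterion forces the two degrees to sum to $|\mu|$, we have $\deg\theta_\mu=\tfrac{1}{2}(|\mu|-\Delta(\mu))$, and similarly $\deg\theta_\nu=\tfrac{1}{2}(|\nu|-\Delta(\nu))$. Adding and using the hypothesis $\Delta(\mu)+\Delta(\nu)=\distance{\mu}{\nu}$ gives
\begin{gather*}
\deg\theta_\mu+\deg\theta_\nu=\tfrac{1}{2}\bigl(|\mu|+|\nu|-\distance{\mu}{\nu}\bigr).
\end{gather*}
Now for each $H\in\AAA$ one has $\mu_H+\nu_H=(\mu\wedge\nu)_H+(\mu\vee\nu)_H$ and $|\mu_H-\nu_H|=(\mu\vee\nu)_H-(\mu\wedge\nu)_H$, so summing over $H$ yields $|\mu|+|\nu|=|\mu\wedge\nu|+|\mu\vee\nu|$ and $\distance{\mu}{\nu}=|\mu\vee\nu|-|\mu\wedge\nu|$. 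Substituting, $|\mu|+|\nu|-\distance{\mu}{\nu}=2|\mu\wedge\nu|$, so $\deg\theta_\mu+\deg\theta_\nu=|\mu\wedge\nu|$ as required. Saito's criterion then delivers the conclusion.

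I do not expect any real obstacle here: the argument is just bookkeeping, and the only substantive ingredient is the elementary identity relating $|\mu|+|\nu|$, $\distance{\mu}{\nu}$, and $|\mu\wedge\nu|$, together with the standard translation between $\Delta$ and the smaller exponent via Saito's criterion.
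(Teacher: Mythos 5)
Your proposal is correct and follows essentially the same route as the paper: both verify Saito's criterion by computing $\deg\theta_\mu+\deg\theta_\nu=\tfrac{1}{2}\bigl(|\mu|+|\nu|-\distance{\mu}{\nu}\bigr)=|\mu\wedge\nu|$ via the identity $\min\Set{a,b}=\tfrac{1}{2}(a+b-|a-b|)$ applied coordinatewise. Your phrasing through $\mu\vee\nu$ is only a cosmetic variant of the paper's direct summation.
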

\begin{proof}
Since  $(\mu\wedge\nu)_H = \min\Set{\mu_H , \nu_H}$ for $H \in \AAA$,
\begin{gather*}
\numof{\mu\wedge\nu}=\sum_{H\in\AAA} \min\Set{\mu_H , \nu_H}.
\end{gather*}
On the other hand, 
\begin{align*}
 \deg(\theta_\mu) + \deg(\theta_\nu) 
&= \frac{\numof{\mu}-\Delta(\mu)}{2}+ \frac{\numof{\nu}-\Delta(\nu)}{2}\\
&= \frac{\numof{\mu}+\numof{\nu}-\Delta(\mu)-\Delta(\nu)}{2}\\
&= \frac{\numof{\mu}+\numof{\nu}-\distance{\mu}{\nu}}{2}\\
&= \sum_{H\in\AAA}\frac{\mu_H+\nu_H-|\mu_H-\nu_H|}{2}\\
&= \sum_{H\in\AAA}\min\Set{\mu_H , \nu_H} = \numof{\mu\wedge\nu}.
\end{align*}
Since $\mu\wedge\nu \subset \mu,\nu$, it follows 
from Saito's criterion
that 
$\Set{\theta_\mu,\theta_\nu}$ is a basis for $D(\AAA,\mu\wedge\nu)$.
\end{proof}

\begin{lemma}\label{lemma:basis}
Assume that $\mu, \nu \in {\Lat}$ satisfy 
$\Delta(\mu) + \Delta(\nu) = \distance{\mu}{\nu}$ and 
that $\Set{\theta_\mu, \theta_\nu}$ is $S$-linearly independent.
For $\kappa \in \LAT$ such that 
$\mu\wedge\nu \subset \kappa \subset \mu\vee\nu$, let us define 
\begin{align*}
\alpha_{\mu,\kappa} 
&= \prod_{H\in \AAA}   \alpha_H^{\max\Set{ \kappa_H - \mu_H , 0}}, \\
\alpha_{\nu,\kappa} 
&= \prod_{H\in \AAA}   \alpha_H^{\max\Set{ \kappa_H - \nu_H , 0}}. \\
\end{align*}
Then $\Set{\alpha_{\mu,\kappa} \theta_\mu,\alpha_{\nu,\kappa} \theta_\nu}$ is a basis for $D(\AAA,\kappa)$.
\end{lemma}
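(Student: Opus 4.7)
The plan is to apply Saito's criterion (Theorem \ref{Saito}) to the pair $\{\alpha_{\mu,\kappa}\theta_\mu,\alpha_{\nu,\kappa}\theta_\nu\}$, so I must verify three things: (i) both derivations lie in $D(\AAA,\kappa)$, (ii) they are $S$-linearly independent, and (iii) the sum of their degrees equals $\numof{\kappa}$. The previous lemma is available and gives $\deg(\theta_\mu)+\deg(\theta_\nu)=\numof{\mu\wedge\nu}$, which will be the key input for the degree computation.

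For (i), I would check the defining containment hyperplane by hyperplane. Since $\theta_\mu(\alpha_H)\in\ideal{\alpha_H^{\mu_H}}$, multiplying by $\alpha_{\mu,\kappa}$ gives $\alpha_{\mu,\kappa}\theta_\mu(\alpha_H)\in\ideal{\alpha_H^{\mu_H+\max\{\kappa_H-\mu_H,0\}}}=\ideal{\alpha_H^{\max\{\kappa_H,\mu_H\}}}\subset \ideal{\alpha_H^{\kappa_H}}$, so $\alpha_{\mu,\kappa}\theta_\mu\in D(\AAA,\kappa)$, and the same argument works for $\alpha_{\nu,\kappa}\theta_\nu$. Part (ii) is immediate: multiplying $S$-linearly independent elements of $\Der{\kk}{S}$ by nonzero polynomials preserves $S$-linear independence.

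For (iii), the main calculation is to show that $\sum_{H\in\AAA}\bigl(\max\{\kappa_H-\mu_H,0\}+\max\{\kappa_H-\nu_H,0\}\bigr)=\numof{\kappa}-\numof{\mu\wedge\nu}$. Here I would use the hypothesis $\mu\wedge\nu\subset\kappa\subset\mu\vee\nu$: for each $H$ we have $\min\{\mu_H,\nu_H\}\le\kappa_H\le\max\{\mu_H,\nu_H\}$, so exactly one of $\kappa_H-\mu_H$, $\kappa_H-\nu_H$ is nonnegative and the other is nonpositive (or both are zero), whence the sum of the two maxima equals $\kappa_H-\min\{\mu_H,\nu_H\}$. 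Summing over $H$ and combining with the preceding lemma,
\begin{align*}
\deg(\alpha_{\mu,\kappa}\theta_\mu)+\deg(\alpha_{\nu,\kappa}\theta_\nu)
&=\sum_{H\in\AAA}\bigl(\kappa_H-\min\{\mu_H,\nu_H\}\bigr)+\deg(\theta_\mu)+\deg(\theta_\nu)\\
&=\numof{\kappa}-\numof{\mu\wedge\nu}+\numof{\mu\wedge\nu}=\numof{\kappa}.
\end{align*}

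I do not expect a serious obstacle: (i) is a local check on each $\alpha_H$, (ii) is formal, and (iii) reduces to the coordinatewise identity $\max\{\kappa_H-\mu_H,0\}+\max\{\kappa_H-\nu_H,0\}=\kappa_H-\min\{\mu_H,\nu_H\}$ which is valid precisely under the assumption $\mu\wedge\nu\subset\kappa\subset\mu\vee\nu$. Once these three pieces are in place, Saito's criterion closes the proof.
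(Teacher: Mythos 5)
Your proposal is correct and follows essentially the same route as the paper: the paper's proof also applies Saito's criterion, noting that $\alpha_{\mu,\kappa}\theta_\mu,\alpha_{\nu,\kappa}\theta_\nu\in D(\AAA,\kappa)$ and that $\deg(\alpha_{\mu,\kappa})+\deg(\alpha_{\nu,\kappa})=\distance{\kappa}{\mu\wedge\nu}$, which combined with the preceding lemma's identity $\deg(\theta_\mu)+\deg(\theta_\nu)=\numof{\mu\wedge\nu}$ gives the required degree sum $\numof{\kappa}$. You have merely spelled out the coordinatewise verification that the paper leaves implicit.
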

\begin{proof}
Note that 
$\deg(\alpha_{\mu,\kappa}) + \deg(\alpha_{\nu,\kappa})=\distance{\kappa}{\mu\wedge\nu}$ and that 
$\alpha_{\mu,\kappa} \theta_\mu$, $\alpha_{\nu,\kappa} \theta_\nu\in D(\AAA,\kappa)$. Thus Saito's 
criterion and  Lemma \ref{Lemma:lemma4.19} completes the proof.
\end{proof}

\begin{proof}[Proof of Theorem \ref{theorem:basis}]
By Theorem \ref{theorem:independency},
 $\Set{\theta_\mu,\theta_\nu}$ is  $S$-linearly independent.
Hence Lemma \ref{lemma:basis} completes the proof.
\end{proof}

\section{Application}\label{section:application}
In this section, we consider the case when a group acts on $V$. 
Let $W$ be a group acting on $V$ from the left.
Canonically, this action induces actions on $S$ and $\Der{\kk}{S}$,
i.e., $W$ acts on $S$ and $\Der{\kk}{S}$ by
$(\sigma f)(v) =f(\sigma^{-1}v)$ and
$(\sigma\delta)(f)=\sigma(\delta(\sigma^{-1}f))$ for 
$\sigma\in W$, $f\in S$, $\delta\in\Der{\kk}{S}$ and $v\in V$.
For each $\sigma\in W$, we assume $\AAA=\sigma \AAA$.
In this case, $W$ also acts on $\AAA$ as a subgroup of 
the symmetric group of $\AAA$.
Hence $W$ also acts on $\LAT$ by $(\sigma\mu)_H=\mu_{\sigma^{-1}H}$.
\begin{lemma}
\label{lemma:invariancity}
For $\mu\in \LAT$ and $\sigma\in W$,
$\Delta(\mu)=\Delta(\sigma\mu)$.
\end{lemma}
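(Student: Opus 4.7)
The plan is to show that the $W$-action on $\Der{\kk}{S}$ restricts to a degree-preserving $\kk$-linear bijection between $D(\AAA,\mu)$ and $D(\AAA,\sigma\mu)$; since the action is compatible with the grading on $S$ and with $S$-module structure up to the action on $S$ itself, a homogeneous basis of the one will map to a homogeneous basis of the other with the same degrees, whence the exponents agree and $\Delta$ is preserved.

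First I would check that for $H\in\AAA$ the linear form $\sigma^{-1}\alpha_H$ has kernel $\sigma^{-1}H$; since $\sigma\AAA=\AAA$, this means $\sigma^{-1}\alpha_H=c_{\sigma,H}\alpha_{\sigma^{-1}H}$ for some nonzero scalar $c_{\sigma,H}\in\kk$. Consequently $\sigma\alpha_{\sigma^{-1}H}=c_{\sigma,H}^{-1}\alpha_H$, and these scale factors propagate as $\sigma(\ideal{\alpha_{\sigma^{-1}H}^{k}})=\ideal{\alpha_H^{k}}$ for every $k\in\NN$.

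Next I would verify that $\sigma$ carries $D(\AAA,\mu)$ into $D(\AAA,\sigma\mu)$. Given $\delta\in D(\AAA,\mu)$, for each $H\in\AAA$ one computes
\begin{gather*}
(\sigma\delta)(\alpha_H)
=\sigma\bigl(\delta(\sigma^{-1}\alpha_H)\bigr)
=c_{\sigma,H}\,\sigma\bigl(\delta(\alpha_{\sigma^{-1}H})\bigr).
\end{gather*}
Since $\delta(\alpha_{\sigma^{-1}H})\in\ideal{\alpha_{\sigma^{-1}H}^{\mu(\sigma^{-1}H)}}$, applying $\sigma$ and using the displayed scaling identity lands the result in $\ideal{\alpha_H^{\mu(\sigma^{-1}H)}}=\ideal{\alpha_H^{(\sigma\mu)(H)}}$, proving the containment. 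The same argument with $\sigma$ replaced by $\sigma^{-1}$ yields the reverse containment, so $\sigma:D(\AAA,\mu)\to D(\AAA,\sigma\mu)$ is a bijection. It is clearly $\kk$-linear and sends homogeneous elements of degree $d$ to homogeneous elements of degree $d$ (the $W$-action preserves the grading on $S$ and on $\Der{\kk}{S}$).

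Finally, although $\sigma$ is only $\sigma$-semilinear ($\sigma(f\delta)=(\sigma f)(\sigma\delta)$), this is enough: if $\Set{\theta_\mu,\theta'_\mu}$ is a homogeneous $S$-basis of $D(\AAA,\mu)$, then $\Set{\sigma\theta_\mu,\sigma\theta'_\mu}$ generates $D(\AAA,\sigma\mu)$ (apply $\sigma$ to any presentation in $D(\AAA,\mu)$), and any relation $f(\sigma\theta_\mu)+g(\sigma\theta'_\mu)=0$ becomes $(\sigma^{-1}f)\theta_\mu+(\sigma^{-1}g)\theta'_\mu=0$ after applying $\sigma^{-1}$, forcing $f=g=0$. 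Thus the exponents of $(\AAA,\sigma\mu)$ coincide with those of $(\AAA,\mu)$, and $\Delta(\sigma\mu)=\Delta(\mu)$. The only mildly delicate point is bookkeeping of the scalars $c_{\sigma,H}$ when chasing $\sigma$ across ideals generated by $\alpha_H$; once one records the identity $\sigma(\ideal{\alpha_{\sigma^{-1}H}^{k}})=\ideal{\alpha_H^{k}}$, everything else is routine.
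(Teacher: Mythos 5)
Your proposal is correct and follows the same route as the paper: the paper's proof is the single assertion that $\sigma$ carries a homogeneous basis of $D(\AAA,\mu)$ to a homogeneous basis of $D(\AAA,\sigma\mu)$, which forces the exponents (and hence $\Delta$) to agree. You have simply supplied the details the paper leaves implicit (the scalar bookkeeping $\sigma\alpha_{\sigma^{-1}H}=c_{\sigma,H}^{-1}\alpha_H$, the containment $\sigma D(\AAA,\mu)\subset D(\AAA,\sigma\mu)$, and the semilinear transfer of the basis property), all of which check out.
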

\begin{proof}
If $\Set{\theta,\theta'}$ be a homogeneous basis for $D(\AAA,\mu)$,
then  $\Set{\sigma\theta, \sigma\theta'}$ 
is a homogeneous basis for $D(\AAA,\sigma\mu)$.
\end{proof}

Next we assume that $\AAA^{W}=\emptyset$, i.e., 
for each $H\in\AAA$, there exists $\sigma_{H}\in W$ such that
$\sigma_{H}H\neq H$.

\begin{lemma}
\label{lemma:symmetricpeak}
Let $\mu\in \Lat$ satisfy $\sigma \mu = \mu$ for all $\sigma\in W$.
If there exist $\nu$ and $\kappa$ satisfying the following,
then $\mu\in\mpts{\Lat}$:
$\mu'\subset\nu$ for all $\mu\coveredby \mu'$;
$\Delta(\mu)-\Delta(\nu)> \distance{\mu}{\nu}-4$;
$\kappa\subset\mu'$ for all $\mu'\coveredby \mu$; and 
$\Delta(\mu)-\Delta(\kappa)> \distance{\kappa}{\mu}-4$.
\end{lemma}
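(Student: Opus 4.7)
My approach is proof by contradiction. Suppose $\mu \notin \mpts{\Lat}$; then by Theorem~\ref{theorem:shape} some neighbour of $\mu$ in the Hasse graph satisfies $\Delta(\mu\pm e_{H}) > \Delta(\mu)$ for some $H \in \AAA$. I will derive a contradiction from the two hypotheses by combining $W$-symmetry with the chain-counting arguments used elsewhere in the paper.

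The first step is to reinterpret both $\Delta$-inequalities combinatorially. Since $\Delta$ changes by $\pm 1$ along every edge of the Hasse graph (Lemma~\ref{lemma:s}), the number of ``up-steps'' (edges on which $\Delta$ increases) in any saturated chain from $a$ to $b$ with $a \subset b$ is the fixed quantity $(\distance{a}{b} - (\Delta(a) - \Delta(b)))/2$. The hypothesis $\Delta(\mu) - \Delta(\nu) > \distance{\mu}{\nu} - 4$ therefore forces every saturated chain from $\mu$ to $\nu$ to contain at most one up-step, and the hypothesis on $\kappa$ bounds the up-step count along every chain from $\kappa$ to $\nu$ by $1$; since such a chain factors through $\mu$, the up-step counts along $\kappa\to\mu$ and along $\mu\to\nu$ sum to at most $1$.

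The second step exploits $W$-symmetry. Lemma~\ref{lemma:invariancity} and $\sigma\mu = \mu$ give $\Delta(\mu\pm e_{H}) = \Delta(\mu\pm e_{\sigma H})$ for every $\sigma \in W$, so this value depends only on the $W$-orbit of $H$; and since $\AAA^{W} = \emptyset$ every orbit has size at least $2$. Hence, whenever $\Delta(\mu+e_{H}) > \Delta(\mu)$ (respectively $\Delta(\mu-e_{H}) > \Delta(\mu)$) for some $H$, there is a second hyperplane $H' \neq H$ with the same property. Applying Lemma~\ref{lemma:latA} to $\mu+e_{H}$ and $\mu+e_{H'}$, which satisfy $(\mu+e_{H})\wedge(\mu+e_{H'}) = \mu$ and $(\mu+e_{H})\vee(\mu+e_{H'}) = \mu+e_{H}+e_{H'}$, the ``three in a common component'' criterion forces $\Delta(\mu+e_{H}+e_{H'}) > \Delta(\mu+e_{H})$. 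The chain $\mu \to \mu+e_{H} \to \mu+e_{H}+e_{H'} \to \cdots \to \nu$, which lies below $\nu$ by the hypothesis $\nu \supset \mu+e_{H}$ for every $H$, then contains at least two up-steps, contradicting the one-up-step bound along $\mu\to\nu$. The dual case $\Delta(\mu-e_{H}) > \Delta(\mu)$ is handled symmetrically using Lemma~\ref{lemma:latA:Detail}, case~(\ref{lemma:latA:case:II}), applied to $\mu-e_{H}$ and $\mu-e_{H'}$ to produce $\Delta(\mu-e_{H}-e_{H'}) > \Delta(\mu-e_{H})$; one then splices the segment $\mu-e_{H}-e_{H'} \to \mu-e_{H} \to \mu \to \mu+e_{H''}$ into a chain from $\kappa$ to $\nu$ and, together with the inequality $\Delta(\mu+e_{H''}) \leq \Delta(\mu)$ already established from the first case, violates the joint up-step bound.

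The main obstacle will be the edge cases. When $\Delta(\mu) = 0$, $\mu$ itself is not in $\Lat$ and Lemma~\ref{lemma:latA} cannot take $\mu$ as one of the three elements required to be in a common component; here one checks whether $\mu+e_{H}+e_{H'} \in \Lat$, and if not (so $\Delta(\mu+e_{H}+e_{H'}) = 0$) one extends the chain one further step through a third distinct hyperplane $H''$ to again exhibit two up-steps, using that $\lvert \AAA \rvert \geq 3$. The case $\lvert \AAA \rvert = 2$ is dispatched separately by a direct parity-and-distance computation, which shows that the hypothesis on $\kappa$ is vacuously unsatisfiable in that situation.
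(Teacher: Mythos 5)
Your treatment of the upward case is correct and is essentially the paper's own argument: $W$-invariance of $\mu$ together with $\AAA^{W}=\emptyset$ produces a second hyperplane $H'\neq H$ with $\Delta(\mu+e_{H'})=\Delta(\mu+e_{H})>\Delta(\mu)$, Lemma \ref{lemma:latA:Detail} then gives $\Delta(\mu+e_{H}+e_{H'})=\Delta(\mu)+2$, and since $\mu+e_{H}+e_{H'}\subset\nu$ the $1$-Lipschitz property of $\Delta$ (Lemma \ref{lemma:s}) forces $\Delta(\mu)-\Delta(\nu)\leq\distance{\mu}{\nu}-4$, contradicting the first hypothesis. Your ``up-step count'' is just a reformulation of that inequality.

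The downward case, however, has a genuine gap. The quantity you count --- the number of up-steps on a saturated chain from $\kappa$ to $\nu$ --- equals $(\distance{\kappa}{\nu}-\Delta(\kappa)+\Delta(\nu))/2$ and is therefore \emph{independent of the chosen chain}, so no amount of splicing can ever violate a bound on it. Worse, every step of the segment $\mu-e_{H}-e_{H'}\to\mu-e_{H}\to\mu\to\mu+e_{H''}$ is a down-step of the ascending chain ($\Delta$ decreases from $\Delta(\mu)+2$ to $\Delta(\mu)+1$ to $\Delta(\mu)$ to $\Delta(\mu)-1$), so it contributes zero up-steps; you never establish, and your hypotheses do not imply, a lower bound of $2$ on the up-step count from $\kappa$ to $\nu$. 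The correct argument is the exact mirror of the upward case but runs against $\kappa$ rather than $\nu$: the hypothesis $\kappa\subset\mu'$ for all $\mu'\coveredby\mu$ gives $\kappa\subset\mu-e_{H}-e_{H'}$, whence $\Delta(\mu)+2=\Delta(\mu-e_{H}-e_{H'})\leq\Delta(\kappa)+\distance{\kappa}{\mu}-2$, i.e.\ $\Delta(\mu)-\Delta(\kappa)\leq\distance{\mu}{\kappa}-4$. This is what the paper's ``the same argument is valid'' refers to, and it yields a contradiction once the second hypothesis is read as $\Delta(\mu)-\Delta(\kappa)>\distance{\mu}{\kappa}-4$, the symmetric counterpart of the first; the inequality printed in the statement, $\Delta(\kappa)-\Delta(\nu)>\distance{\kappa}{\nu}-4$, is evidently a misprint, since in the application to $B_2$ and $G_2$ (Proposition \ref{prop:b2g2}) one has $\Delta(\kappa)-\Delta(\nu)=0$ while $\distance{\kappa}{\nu}-4>0$. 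Taking the misprinted inequality at face value led you into an argument that cannot close. (Your remarks on the edge cases $\Delta(\mu)=0$ and $\numof{\AAA}=2$ are reasonable but secondary; in those cases the hypotheses are in fact unsatisfiable.)
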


\begin{proof}
It  suffices to show that $\Delta(\mu')<\Delta(\mu)$ 
if $\mu \coveredby \mu'$ or $\mu' \coveredby \mu$.
First let us assume $\mu \coveredby \mu'$,
$\Delta(\mu')>\Delta(\mu)$ and $\mu'_{H}\neq\mu_{H}$.
Since $\AAA^{W}=\emptyset$,
$H\neq\sigma H$ for some $\sigma\in W$.
For such $\sigma$, it holds that 
$(\sigma\mu')_{H}=\mu'_{\sigma^{-1} H}=\mu_{\sigma^{-1} H}=\mu_H \neq \mu'_{H}=\mu_{H}+1$, 
where the second equality holds because $\sigma^{-1}H \neq H$ and because of 
the definition of $\mu'$, the third because of the $W$-invariance of $\mu$. 
Hence $\sigma\mu' \neq \mu'$. 
By the same computation, we can show that 
\begin{eqnarray*}
(\sigma \mu')_{\sigma H}&=&\mu'_{\sigma H}+1\ \mbox{and}\\
(\sigma \mu')_{H'}&=&\mu'_{H'}\ (H' \in \AAA \setminus \{H,\sigma H\}).
\end{eqnarray*}
Hence $\distance{\sigma\mu'}{\mu'}=2$ and $\mu=\mu'\wedge \sigma\mu'$.
By the assumption
$\Delta(\mu')=\Delta(\sigma\mu')>\Delta(\mu)>0$.
Hence, by Lemma \ref{lemma:latA},
\begin{align*}
\Delta(\mu'\vee\sigma\mu') &= \Delta(\mu')+1\\
&= \Delta(\mu)+2.
\end{align*}
By Lemma \ref{lemma:s},
$\Delta(\mu'\vee\sigma\mu')= \Delta(\mu)+2 \leq \distance{\nu}{\mu'\vee\sigma\mu'}+\Delta(\nu)$.
Since $\distance{\nu}{\mu'\vee\sigma\mu'}+\Delta(\nu)=\distance{\nu}{\mu}-2+\Delta(\nu)$, we have 
$\Delta(\mu) -\Delta(\nu)\leq \distance{\nu}{\mu}-4$,
which is a contradiction.

The  same argument is valid for the case where
$\mu' \coveredby \mu$,
$\Delta(\mu')>\Delta(\mu)$ and $\mu'_{H}\neq\mu_{H}$.
Hence we have the lemma.
\end{proof}

As an application of the results above, we consider the exponents of 
Coxeter arrangements, which is a set of all reflecting hyperplanes 
of a finite irreducible Coxeter groups. Since $A_2$-type is 
investigated in \cite{W}, let us consider 
Coxeter arrangements of type $I_2(n)\ (n \ge 4)$. 


It is shown by Terao in \cite{T} that the constant multiplicity on the 
Coxeter arrangement 
is free and the exponents are also determined. We give the meaning of 
Terao's result from our point of view, i.e., the role of constant multiplicity 
in the multiplicity lattice.

\begin{proposition}\label{prop:b2g2}
Let $\AAA$ be a Coxeter arrangement of type $I_2(n)\ (n \ge 4)$. 
Then $\mu=(2k+1,\ldots,2k+1) \in \mpts\Lat$.
\end{proposition}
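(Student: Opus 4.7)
The plan is to apply Lemma \ref{lemma:symmetricpeak} to $\mu = (2k+1,\ldots,2k+1)$, with $\nu = (2k+2,\ldots,2k+2)$ and $\kappa = (2k,\ldots,2k)$. For the full Weyl group $W$ of type $B_2$ (respectively $G_2$), no single hyperplane is stabilised by every element, so $\AAA^{W}=\emptyset$ is in force, and the constant compositions $\mu,\nu,\kappa$ are manifestly $W$-invariant. Each upper cover of $\mu$ has the form $\mu+e_H$ and is contained in $\nu$; dually, each lower cover is $\mu-e_H$ and contains $\kappa$. The containment hypotheses in Lemma \ref{lemma:symmetricpeak} are therefore satisfied.

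The substance of the argument is the computation of $\Delta$ at the three compositions, which I extract from Terao's theorem \cite{T}: for a rank-two Coxeter arrangement with Coxeter number $h$ and exponents $(e_1,e_2)$, the constant multiplicity $m$ is free with exponents $(kh+e_1,\,kh+e_2)$ when $m=2k+1$ and $(kh,kh)$ when $m=2k$. For $B_2$ ($h=4$, $(e_1,e_2)=(1,3)$) this yields $\Delta(\mu)=2$ and $\Delta(\kappa)=\Delta(\nu)=0$; for $G_2$ ($h=6$, $(e_1,e_2)=(1,5)$) it yields $\Delta(\mu)=4$ and $\Delta(\kappa)=\Delta(\nu)=0$.

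The remaining verification is numerical. Since $\distance{\mu}{\nu}=\distance{\mu}{\kappa}=\numof{\AAA}$ equals $4$ for $B_2$ and $6$ for $G_2$, the gap conditions required by Lemma \ref{lemma:symmetricpeak} — which control how fast $\Delta$ may decay when moving away from the symmetric candidate $\mu$ — reduce to $2>0$ in the $B_2$ case and $4>2$ in the $G_2$ case, both of which hold. Hence Lemma \ref{lemma:symmetricpeak} gives $\mu\in\mpts{\Lat}$. The principal obstacle is invoking Terao's theorem to identify the exponents in both parities of $m$; once those exponents are pinned down, everything else is direct substitution. Note that in the $B_2$ case the margin in the inequality is exactly tight, so this application sits on the boundary of the range of Lemma \ref{lemma:symmetricpeak}.
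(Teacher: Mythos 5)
Your proof is correct and takes essentially the same route as the paper: both apply Lemma \ref{lemma:symmetricpeak} with $\nu=(2k+2,\ldots,2k+2)$ and $\kappa=(2k,\ldots,2k)$, reading off $\Delta(\mu)=2$ (resp.\ $4$) and $\Delta(\nu)=\Delta(\kappa)=0$ from Terao's theorem \cite{T}, and you merely spell out the containment and distance checks that the paper leaves implicit. (Only your closing aside is slightly off: by parity the quantities $\Delta(\mu)-\Delta(\nu)$ and $\distance{\mu}{\nu}$ agree mod $2$, and in both the $B_2$ and $G_2$ cases the strict inequality holds with the same margin of $2$, so the $B_2$ application is no tighter than the $G_2$ one.)
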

\begin{proof}
Let $\AAA$ be the Coxeter arrangement of type $I_2(n)$. Then 
we can take the Coxeter group of type $I_2(n)$ as $W$. 
Let $\nu=(2k+2,\ldots,2k+2)$ and $\kappa=(2k,\ldots,2k)$.
Then $\distance{\mu}{\nu}=\distance{\mu}{\kappa}=n$. 
Since $\Delta(\mu)=n-2$ and  $\Delta(\nu)=\Delta(\kappa)=0$ by \cite{T},
it follows from Lemma \ref{lemma:symmetricpeak} that $\mu\in \mpts{\LAT'}$.
%
\end{proof}

Now we can determine the basis and exponents of multiplicities on 
Coxeter arrangements when they are near the 
constant one, which is based on the primitive derivation methods in 
\cite{T} and \cite{Y0}. 

\begin{cor}
Let $\AAA$ be a Coxeter arrangement of type  $I_2(n)\ (n \ge 4)$,
$\mu=(2k+1,\ldots,2k+1) \in \LAT$
and $i\in \ZZ^{\numof{\AAA}}$ such that $|I|:=\sum_{H} |i_{H}| < \numof{\AAA}=n$.
If $\nu \in \Lambda$ is defined by $\nu_H=\mu_H+i_H$ and 
$I:=\sum_{H} i_H$, 
then 
\begin{align*}
\exp(\AAA,\nu)=
\left(
kn+1+\displaystyle \frac{I+|I|}{2},
(k+1)n-1+\displaystyle \frac{I-|I|}{2}
\right).
\end{align*}
\end{cor}

The proof of above corollary is completed by applying 
Corollary $\ref{cor:distance}$ and Proposition $\ref{prop:b2g2}$.

\begin{remark}
Recently in \cite{A3}, 
by using the results in this 
article, the first author proved that 
$\Delta(\mu) \le |\AAA|-2$ 
for $\mu \in P(\Lambda')$
in the case when 
a two-dimensional arrangement 
$\AAA$ is defined
over a field of characteristic zero. 
\end{remark}

\begin{remark}
In \cite{W10} it is proved that for the 
Coxeter multiarrangement $(\AAA,\mu)$ of type $B_2$ defined by 
$$
x^{2k+1}y^{2k+1}(x-y)^{2j+1}(x+y)^{2j+1}=0,
$$
it holds that $\Delta(\mu)=2$. However, to determine 
explicitly which multiplicity makes $\Delta=2$ is difficult even for 
$B_2$-type. 
\end{remark}



\end{document}